\newcommand{\red }{\color{red}}
\numberwithin{equation}{section}
\def\beginn{\begin{eqnarray*}}
\def\endn{\end{eqnarray*}}
\def\beginy{\begin{eqnarray}}
\def\endy{\end{eqnarray}}
\def\begine{\begin{enumerate}}
\def\ende{\end{enumerate}}
\def\be{\begin{equation}}
\def\ee{\end{equation}}
\def\bea{\begin{eqnarray}}
\def\eea{\end{eqnarray}}
\numberwithin{equation}{section}
\theoremstyle{plain}
\newtheorem{thm}{Theorem}[section]
\newtheorem{prop}{Proposition}
\newtheorem{lem}{Lemma}
\newtheorem{coro}{Corollary}
\newtheorem{rmk}{Remark}
{
\theoremstyle{definition}

\newtheorem{assu}{Assumption}
}
\newtheorem{deff}{Definition}
\newcommand{\non}{\nonumber \\}
\newcommand{\bbA}{{\bf A}}
\newcommand{\bbB}{{\bf B}}
\newcommand{\bbC}{{\bf C}}
\newcommand{\bbD}{{\bf D}}
\newcommand{\bbd}{{\bf d}}
\newcommand{\bbe}{{\bf e}}
\newcommand{\bbf}{{\bf f}}
\newcommand{\bbF}{{\bf F}}
\newcommand{\bbG}{{\bf G}}
\newcommand{\bbH}{{\bf H}}
\newcommand{\bbw}{{\bf w}}
\newcommand{\bbI}{{\bf I}}
\newcommand{\bbM}{{\bf M}}
\newcommand{\bbQ}{{\bf Q}}
\newcommand{\bbr}{{\bf r}}
\newcommand{\bbS}{{\bf S}}
\newcommand{\bbT}{{\bf T}}
\newcommand{\bbU}{{\bf U}}
\newcommand{\bbu}{{\bf u}}
\newcommand{\bbV}{{\bf V}}
\newcommand{\bbv}{{\bf v}}
\newcommand{\bbW}{{\bf W}}
\newcommand{\bbX}{{\bf X}}
\newcommand{\bbx}{{\bf x}}
\newcommand{\bbY}{{\bf Y}}
\newcommand{\bby}{{\bf y}}
\newcommand{\bbZ}{{\bf Z}}
\newcommand{\ep}{\ensuremath{\epsilon}}
\newcommand{\wh}{\widehat}
\newcommand{\bfv}{{\mathbf{v}}}
\newcommand{\Cov}{\text{Cov}}
\newcommand{\diag}{\mathop{\sf diag}}
\newcommand{\beq}{\begin{equation}}
\newcommand{\eeq}{\end{equation}}
\newcommand{\beas}{\begin{eqnarray*}}
\newcommand{\eeas}{\end{eqnarray*}}
\newcommand{\bei}{\begin{itemize}}
\newcommand{\eei}{\end{itemize}}
\newcommand{\ben}{\begin{enumerate}}
\newcommand{\een}{\end{enumerate}}
\newcommand{\Rmnum}[1]{\expandafter\@slowromancap\romannumeral #1@}
\begin{document}
\title[]{Limiting Laws for Divergent Spiked Eigenvalues and Largest Non-spiked Eigenvalue of Sample Covariance Matrices}
\author{T. Tony Cai}
\author{Xiao Han}
\author{Guangming Pan}

\thanks{ Tony Cai was supported in part by NSF Grant DMS-1712735 and  NIH Grant R01 GM-123056. Guangming Pan was supported in part by by a MOE Tier 2 grant 2014-T2-2-060 and
by a MOE Tier 1 Grant RG25/14 at the Nanyang Technological University, Singapore.}

\address{The Wharton School, University of Pennsylvania, USA}
\email{tcai@wharton.upenn.edu}

\address{Division of Mathematical Sciences, School of Physical and Mathematical Sciences, Nanyang Technological University, Singapore}
\email{xhan011@e.ntu.edu.sg}

\address{Division of Mathematical Sciences, School of Physical and Mathematical Sciences, Nanyang Technological University, Singapore}
\email{gmpan@ntu.edu.sg}

\subjclass[2010]{62H25, 60B20, 60F05, 62H10}

\date{\today}

\keywords{Central Limit Theorem, Tracy-Widom distribution, Extreme Eigenvalues, Sample Covariance Matrix}
\maketitle

\begin{abstract}
We study the asymptotic distributions of the spiked eigenvalues and the largest nonspiked eigenvalue of the sample covariance matrix under a general covariance matrix model with divergent spiked eigenvalues, while the other eigenvalues are bounded but otherwise arbitrary. The limiting normal distribution for the spiked sample eigenvalues is established. It has distinct features that the asymptotic mean relies on not only the population spikes but also the nonspikes and that the asymptotic variance in general depends on the population eigenvectors. In addition, the limiting  Tracy-Widom law for the largest nonspiked sample eigenvalue is obtained.

Estimation of the number of spikes and the convergence of the leading eigenvectors are also considered.
The results hold even when the number of the spikes diverges. As a key technical tool, we develop a Central Limit Theorem for a type of random quadratic forms where the random vectors and random matrices involved are dependent. This result can be of independent interest.

\medskip\noindent
{\small \bf KEYWORDS}:  Extreme eigenvalues, factor model, principal component analysis, sample covariance matrix, spiked covariance matrix model, Tracy-Widom distribution.

\end{abstract}

\section{Introduction}
\label{sec:intro}

Covariance matrix plays a fundamental role in multivariate analysis and high-dimensional statistics. There has been significant recent interest in studying the properties of the leading eigenvalues and eigenvectors of the sample covariance matrix, especially in the high-dimensional setting. See, for example, \cite{J01, Hoyle04, BS006, P07, Nadler08, J09, M09, Birnbaum12, CMW13, CMW15}.  These problems are not only of interest in their own right they also have close connections to important statistical problems such as  principal component analysis and testing for the covariance structure of high-dimensional data.

Principal component analysis (PCA) is a widely used technique in multivariate analysis for a range of purposes, including dimension reduction, data visualization, clustering, and feature extraction \cite{ander84,hastie2009elements}.
PCA is particularly well suited for the settings where the signal of interest lies in a much lower dimensional subspace and it has been applied in a broad range of fields such as genomics,  image recognition, data compression, and financial econometrics. For example,  widely used factor models in financial econometrics typically assume that a small number of unknown common factors  drive the asset returns \cite{Chamberlain83}. In PCA, the leading eigenvalues and eigenvectors of the population covariance matrix need to be estimated from data and are conventionally estimated by their empirical counterparts. It is thus important to understand the spectral properties of the sample covariance matrix.

\subsection{The Problem}

To be concrete, consider the data matrix $\bbY=\bold\Gamma\bbX$ where $\bbX=(\bbx_1,\cdots,\bbx_n)$ is a $(p+l)\times n$ random matrix whose entries are independent with zero mean and unit variance and $\bold\Gamma$ is a $p\times (p+l)$ deterministic matrix with $l/p\rightarrow 0$.
 Let $\bold\Sigma=\bold\Gamma\bold\Gamma^{\intercal}$ be the population covariance matrix.
The sample covariance matrix is defined as
  \begin{equation}\label{a2}
  \bbS_n=\frac{1}{n} \bbY \bbY^{\intercal} =\frac{1}{n}\bold\Gamma\bbX\bbX^{\intercal}\bold\Gamma^{\intercal}.
   \end{equation}
Denote the singular value decomposition (SVD) of $\bold\Gamma$ by
 \begin{equation}\label{a3}
\bold\Gamma =  \bbV\Lambda^{1\over 2}\bbU,
 \end{equation}
where $\bbV$ and $\bbU$ are $p\times p$ and $p\times (p+l)$ orthogonal matrices respectively ($\bbV\bbV^{\intercal}=\bbU\bbU^{\intercal}=\bbI$), and $\bold\Lambda$ is a diagonal matrix consisting in descending order of the eigenvalues  $\mu_1\geq \cdots \geq \mu_p$ of $\bold\Sigma$.

In statistical applications such as PCA, one is most interested in the setting where there is a clear separation between a few leading eigenvalues and the rest. In this case, the leading principal components account for a large proportion of the total variability of the data. We consider in the present paper the setting where there are $K$ spiked eigenvalues that are separated from the rest. More specifically, we assume that $\mu_1\ge \cdots \ge \mu_K$ tend to infinity, while the other eigenvalues  $\mu_{K+1}\geq \cdots \geq\mu_{p}$ are bounded but otherwise arbitrary.  Write
\begin{equation}\label{a7}\bold\Lambda=\left(
 \begin{matrix}
   \bold\Lambda_S & 0\\
  0 &\bold\Lambda_P
  \end{matrix}
  \right),
   \end{equation}
where $\bold\Lambda_S=\diag(\mu_1,...,\mu_K)$ and $\bold\Lambda_P=\diag(\mu_{K+1},...,\mu_{p})$.

 A typical example of (\ref{a7}) is the factor model
\begin{equation}
\bbY=\bold\Lambda\bbF+\bbT\bbZ= (\begin{matrix}\bold\Lambda &\bbT \end{matrix}) \left(\begin{matrix}\bbF\\ \bbZ \end{matrix}\right)
\end{equation}
where $\bold\Lambda$ is  $p\times K$-dimensional factor loading, $\bbF$ is the corresponding $K\times n$  factor, $\bbT$ is $p\times p$ matrix and $\bbZ$ is the idiosyncratic noise matrix. A common assumption is that the singular values of the factor part $\bold\Lambda\bbF$ are significantly larger than those of the noise part (otherwise the signals are overwhelmed by noise). Indeed, \cite{O09} considered the weak factor model to test the number of factors, where the leading eigenvalues contributed by the factor part are of order $p^{\theta}$ for some $\theta\in(0,1)$. \cite{BS002} and \cite{F13} assume that the leading eigenvalues of the pervasive factor model are of order $p$. Here
$\bold\Gamma= (\begin{matrix}\bold\Lambda &\bbT \end{matrix})$ is not a square matrix, and thus it is necessary to consider the setting where $\bold\Gamma$ is rectangular.

A second example is the covariance matrix $\bold\Sigma$ used in the intraclass correlation model, where the covariance matrix is of the form
  $$\bold\Sigma=(1-\rho)\bbI+\rho\bbe\bbe^{\intercal}.$$
Here $\bbI$ is the identity matrix, $\bbe=(1,1,...,1)^{\intercal}$ and $0<\rho<1$. It is easy to see that the leading eigenvalue of $\bold\Sigma$ is $p\rho+(1-\rho)$, while the other eigenvalues are equal to $(1-\rho)$. 

We study in the present paper the asymptotic distributions of the leading eigenvalues and the largest nonspiked eigenvalue of the sample covariance matrix $\bbS_n$, under the general spiked covariance matrix model given in (\ref{a3}) and (\ref{a7}) with divergent spiked eigenvalues $\mu_1\geq \cdots \geq \mu_K$.  In many statistical applications, determining the number of principal components is an important problem. We also consider estimation of the number of spikes as well as the convergence of the leading eigenvectors.

The model defined through (\ref{a3}) and (\ref{a7}) belongs to the class of spiked covariance matrix models. Johnstone \cite{J01} was the first to introduce a special spiked covariance matrix model, where the population covariance matrix is diagonal and is of the form
\begin{eqnarray}\label{0227.2}
 \bold\Sigma=\diag(\mu_1^2,...,\mu_K^2,1,...,1)
 \end{eqnarray}
with
$\mu_1 > \mu_2 \cdots \ge \mu_K> 1$.
\cite{J01} established the limiting Tracy-Widom distribution for the maximum eigenvalue of the real Wishart matrices when $p$ and $n$ are comparable. The spiked covariance matrix model (\ref{0227.2}) in \cite{J01}  has been extended in various directions. So far the focus has mostly been on the settings of bounded spiked eigenvalues with all the nonspiked eigenvalues being equal to 1. See more discussion in Section \ref{background.sec}.

%


\subsection{Our contributions}

In this paper, we first establish the limiting normal distribution for the spiked eigenvalues of the sample covariance matrix $\bbS_n$. The limiting distribution has a distinct feature. Unlike in the more conventional settings,  the asymptotic variance in general depends on the population eigenvectors. More precisely, the variance of a spiked sample eigenvalue depends on the right singular vector matrix  $\bbU$ defined in the SVD (\ref{a3}) (but not the left singular vector matrix $\bbV$).
The limiting distribution of the spiked sample eigenvalues also precisely characterizes the dependence on the corresponding population spiked eigenvalues as well as the nonspiked ones.  New technical tools are needed to establish the result. In particular, we develop a Central Limit Theorem (CLT) for a type of random quadratic forms where the random vectors and random matrices involved are dependent. This result can be of independent interest.
In addition, we establish the limiting  Tracy-Widom law for the largest nonspiked eigenvalue of $\bbS_n$.

The limiting distributions for the spiked eigenvalues and the largest nonspiked eigenvalue have important applications. In particular,  based on our theoretical results, we propose an algorithm for estimating the number of the spikes, which is of interest in many statistical applications.
 We also consider the properties of  the sample eigenvectors corresponding to the spiked eigenvalues and show that they are consistent estimators of the population eigenvectors in terms of the $L_2$ norm.   An important improvement of our paper over many known results in the literature is  that our results hold even when the number of the spikes diverges as $n, p\rightarrow \infty$, and we allow the nonspiked eigenvalues  to be unequal.

\subsection{Background and related work}
\label{background.sec}


Since the seminal work of Johnstone  \cite{J01},  the special spiked covariance matrix model (\ref{0227.2}) has been studied much further and the model has been extended in various directions. See, for example \cite{BS006,BBK,BY08,B14,J09,M09,P07,S13,WF16,CMW13, CMW15}. We discuss briefly here some of these results. This review is by no means exhaustive.

Paul \cite{P07} showed that if $p/n\to \gamma\in (0,1)$ as $n\to\infty$, and the largest eigenvalue $\mu_1$ of $\bold\Sigma$ satisfies $\mu_1 \leq (1+\sqrt{\gamma})$, then the leading sample principal eigenvector $\wh\bfv_1$ is asymptotically almost surely orthogonal to the leading population eigenvector $\bfv_1$, i.e.,  $|\bfv_1'\wh\bfv_1|\to 0$ almost surely.
Thus, in this case, $\wh\bfv_1$ is not useful at all as an estimate of $\bfv_1$.
Even when $\mu_1 > (1+\sqrt{\gamma})$, the angle between $\bfv_1$ and $\wh\bfv_1$ still does not converge to zero unless $\mu_1\to\infty$.

Baik and Silverstein \cite{BS006} considered a case where the covariance matrix
 \begin{eqnarray}\label{1005.1h}
 \bold\Sigma=\bbV\left(
 \begin{matrix}
   \Lambda_S & 0\\
  0 &\bbI
  \end{matrix}
  \right)\bbV^{\intercal}
  \label{a1}
\end{eqnarray}
with $\Lambda_S$ being  a diagonal matrix of fixed rank  and $\bbV$  a unitary matrix. It is shown that the spiked eigenvalues tend to some limits in probability, assuming that the spectral norm of $\Lambda_S$ is bounded and $\lim_{n\rightarrow \infty}\frac{p}{n}=\gamma \in (0,\infty)$. Bai and Yao \cite{BY08} further showed that the spiked eigenvalues converge in distribution to Gaussian distribution or the eigenvalues of a finite dimensional matrix with i.i.d. Gaussian entries.  Baik, et al. \cite{BBK} investigated the asymptotic behavior of the largest  eigenvalue when the entries of $\bbX$ follow the standard complex Gaussian distribution and observed a phase transition phenomenon that the asymptotic distribution depends on the scale of the spiked population eigenvalues.  Recently, Bloemendal et al. \cite{B14} obtained the precise large deviation of the spiked eigenvalues and non-spiked eigenvalues
under a more general model than (\ref{a1}).
We should note that the above results only consider the the case of bounded spiked eigenvalues with the nonspiked eigenvalues all being equal to 1.

Jung and Marron \cite{M09}  and Shen et al. \cite{S13}  considered the model
\begin{eqnarray}\label{shen}
   \bbY=\bbV\Lambda^{1\over 2}\bbX,
\end{eqnarray}
where the entries of $\bbX$ are i.i.d. standard normal random variables, and $\Lambda=\diag(\mu_1,...,\mu_K,\mu_{K+1},\cdots,\mu_p)$ is the diagonal matrix consisting of the population eigenvalues, and $\bbV$ is an orthogonal matrix. \cite{M09}  and \cite{S13} showed the almost sure convergence of the spiked eigenvalues when the spiked population eigenvalues satisfy that $p/(\mu_jn),j=1,\cdots,K$ tend to positive constants or zero and $\mu_{K+1},\cdots,\mu_p$ are approximately equal to one. The almost sure convergence of the eigenvectors associated with the spikes is also investigated.

Wang and Fan  \cite{WF16} further developed the asymptotic distribution of the largest sample eigenvalues of the model (\ref{shen}) under a more general setting, which allows $\mu_{K+1},...,\mu_p$ to be any bounded number and the entries of $\bbX$ to be i.i.d. subGaussian random variables. The asymptotic behaviors of the corresponding eigenvectors are also discussed in \cite{WF16}. Here we would like to point out that \cite{WF16} did not provide the limits in probability of spikes for general $\mu_{K+1},...,\mu_p$ when $p/(\mu_jn),j=1,\cdots,K$, tend to positive constants. To the best of our knowledge, the asymptotic behavior of the spiked eigenvalues for general $\mu_{K+1},...,\mu_p$ when $p/(\mu_jn), \ j=1,\cdots,K$, converge to positive constants is still open.

Note that \cite{M09}, \cite{S13} and \cite{WF16} swapped the roles of the sample size $n$ and the dimension $p$ so that they essentially studied the matrix $\bbX^{\intercal}\Lambda\bbX$. This is equivalent to assuming that the population covariance matrix is  diagonal. Indeed, as will be seen later, in general the asymptotic variance of the spiked eigenvalues depends on the population eigenvectors.
This phenomenon does not occur under the previously studied model.

 \subsection{Organization of the paper}

The rest of the paper is organized as follows. Section \ref{sec2h}  establishes the limiting normal distribution for the spiked eigenvalues and the limiting Tracy-Widom distribution for the largest nonspiked eigenvalue of the sample covariance matrix $\bbS_n$.
An algorithm for identifying the number of spikes is developed in Section \ref{sec5h}. Section \ref{Eigenvectors.sec} considers the properties of the principal components and shows that
the sample eigenvectors corresponding to the spiked eigenvalues are consistent estimators of the population eigenvectors in terms of the $L_2$ norm. Most of the results developed for $\bbS_n$ also hold for the centralized sample covariance matrices and this is discussed in Section \ref{Centralized.sec}.
 Section \ref{sec6h} investigates the numerical performance through  simulations and an application of a factor model.  The proof of one of the main results  
is given in Section \ref{proofs.sec} and the proof of the other results is provided in the supplementary material \cite{CHP17}.

\section{Asymptotics for Spiked Eigenvalues and Largest Nonspiked Eigenvalue of $\bbS_n$}\label{sec2h}
\label{CLT-Eigenvalues.sec}

We investigate in this section the limiting laws for the leading eigenvalues and the largest nonspiked eigenvalue of the sample covariance matrix $\bbS_n$ under the general spiked covariance matrix model (\ref{a3}) and (\ref{a7}) with divergent spiked eigenvalues $\mu_1\geq \cdots \geq \mu_K$, while the other eigenvalues are bounded but otherwise arbitrary.  We begin with the notation that will be used throughout the rest of the paper.

For two sequences of positive numbers $a_n$ and $b_n$, we write $a_n\gtrsim b_n$ when $a_n\geq cb_n$ for some absolute constant $c>0$, and $a_n\lesssim b_n$ when $b_n\gtrsim a_n$. We write $a_n \sim b_n$ when both $a_n\gtrsim b_n$ and $a_n\lesssim b_n$ hold. Moreover, we write $a_n \ll b_n$ when $a_n/b_n\rightarrow0$. For a sequence of random variables $A_n$, if $A_n$ converges to $b$ in probability, then we write $A_n \stackrel{i.p.}\rightarrow b$.
We say an event $\mathcal{A}_n$ holds with high probability if $\mathbb{P}(\mathcal{A}_n)\ge 1-O(n^{-l})$ for some constant $l>0$. Denote the $j$-th largest eigenvalue of a matrix $\bbM$ by $\lambda_j(\bbM)$ and the largest singular value by $\|\bbM\|$. Set $\|\bbM\|_F=\sqrt{tr(\bbM\bbM^{\intercal})}$.    For simplicity, denote by $\lambda_1\geq\lambda_2\geq...\geq\lambda_K\geq\cdots\geq \lambda_p$ the ordered eigenvalues of the sample covariance matrix $\bbS_n$, and denote by $\mu_1\geq\mu_2\geq...\geq\mu_K\geq\cdots\geq \mu_p$ the ordered eigenvalues of the population covariance matrix $ \bold\Sigma$. Throughout this paper $c$ and $C$ are constants that may vary from place to place.

\medskip
To investigate the sample covariance matrix $\bbS_n$ in (\ref{a2}) with the population covariance matrix $\bold\Sigma$ specified in (\ref{a3}) and (\ref{a7}) we make the following assumptions.

\begin{assu}\label{0829-1a}
$\{\bbx_{j}=(\bbx_{1j},\cdots,\bbx_{p+l,j})^{\intercal}$, $j=1,...,n\}$ are i.i.d.  random vectors.  $\{\bbx_{ij}$: $i=1,...,p+l$, $j=1,...,n\}$ are independent random variables such that $\mathbb{E}\bbx_{ij}=0$, $\mathbb{E}|\bbx_{ij}|^2=1$, $\mathbb{E}|\bbx_{ij}|^4=\gamma_{4i}$ and $\sup_i\gamma_{4i}\le C$.
\end{assu}

\begin{assu}\label{0829-1}
$p\gtrsim n$ and
the $K$ largest population eigenvalues $\mu_i$ are such that $d_i\equiv\frac{p}{n\mu_i}\rightarrow 0$, $i=1,2,...,K$. And for $i=K+1,...,p$, $\mu_i$ are bounded by $C$. Moreover, $\frac{K}{n^{1/6}}\rightarrow 0$ and $K^2d_K\rightarrow 0$.
 \end{assu}

\noindent{ Assumption $2'$}.
 $\frac{p}{n}\rightarrow 0$, $\mu_i\gg 1$, $i=1,...,K$ and  $K\ll \min\{p, n^{1/6}\}$.

\medskip
Note that we do not assume that $p$ and $n$ are of the same order. The following theorems hold either under Assumption \ref{0829-1} or Assumption  {\red $2'$}  except Theorem \ref{0606-2}. We only give the proofs under Assumption \ref{0829-1}. The proofs under Assumption  {\red $2'$} are similar and thus we omit them.
\begin{assu}\label{0829-1c}
 There exists a positive constant $c$ not depending on $n$ such that $\frac{\mu_{i-1}}{\mu_i}\ge c>1$, $i=1,2,...,K$.
 \end{assu}
Assumption \ref{0829-1c} implies that the spiked eigenvalues are well-separated. It also implies that $\lambda_1>\lambda_2>...>\lambda_K$ with probability tending to 1 by Theorem \ref{0318-1} below.



\subsection{Asymptotic behavior of the spiked sample eigenvalues}

 Our first result gives the limits in probability for the spiked eigenvalues of $\bbS_n$,  $\lambda_1\geq...\geq\lambda_K$.

\begin{thm}\label{0318-1}
Suppose that Assumption \ref{0829-1a} holds. Moreover, either Assumption \ref{0829-1} or Assumption {\red $2'$}  holds.  Then
\begin{eqnarray}\label{0318.1}
\frac{\lambda_i}{\mu_i}-1=O_p(d_i+\frac{K^4}{n}+\frac{1}{\mu_i}),
\end{eqnarray}
uniformly for all $i=1,...,K$.
\end{thm}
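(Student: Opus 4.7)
The plan is to reduce the spiked eigenvalue problem to a $K\times K$ Schur-complement equation and then solve that equation perturbatively. Since left-multiplication by the orthogonal matrix $\bbV$ does not affect eigenvalues, it suffices to analyze $\Lambda^{1/2}\bbW\bbW^\intercal\Lambda^{1/2}/n$ with $\bbW=\bbU\bbX$. Partition $\bbW=(\bbW_1^\intercal,\bbW_2^\intercal)^\intercal$ conformally with the block form (\ref{a7}) of $\Lambda$, and set $\bbS_2=\Lambda_P^{1/2}\bbW_2\bbW_2^\intercal\Lambda_P^{1/2}/n$. A standard Bai--Silverstein-type bound (applied to the non-spiked block, whose population eigenvalues are bounded) gives $\|\bbS_2\|=O_p(p/n)$, which is of strictly smaller order than every $\mu_i$, $i\le K$, by Assumption \ref{0829-1}.

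For any $\lambda$ outside the spectrum of $\bbS_2$, the block determinant identity shows that $\lambda$ is a sample eigenvalue iff
\[
\det\!\Big(\lambda\Lambda_S^{-1}-\tfrac{1}{n}\bbW_1\bbW_1^\intercal-\mathcal{R}(\lambda)\Big)=0,\qquad
\mathcal{R}(\lambda)=\tfrac{1}{n^2}\bbW_1\bbW_2^\intercal\Lambda_P^{1/2}(\lambda\bbI-\bbS_2)^{-1}\Lambda_P^{1/2}\bbW_2\bbW_1^\intercal.
\]
Since $\bbU_2\bbU_2^\intercal=\bbI_{p-K}$, one verifies that $\mathbb{E}[\bbW_2^\intercal\Lambda_P\bbW_2]=\Tr(\Lambda_P)\,\bbI_n$, so the Neumann expansion $(\lambda\bbI-\bbS_2)^{-1}=\lambda^{-1}\bbI+\lambda^{-1}\bbS_2(\lambda\bbI-\bbS_2)^{-1}$ identifies the leading piece of $\mathcal{R}(\lambda)$ as $\lambda^{-1}\Tr(\Lambda_P)n^{-1}\bbI_K$, up to lower-order fluctuations. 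Combining with $\bbW_1\bbW_1^\intercal/n\approx\bbI_K$ reduces the determinantal equation to $K$ approximately decoupled scalar equations $\lambda/\mu_i=1+\Tr(\Lambda_P)/(n\lambda)+\text{error}$, whose leading solution $\lambda_i\approx\mu_i+\Tr(\Lambda_P)/n$ produces the $d_i$ term in the claimed rate because $\Tr(\Lambda_P)\lesssim p$.

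The three sources of error then feed into the three terms of the bound: (a) the fluctuation $\bbW_1\bbW_1^\intercal/n-\bbI_K$, bounded in operator norm by $O_p(K^2/n)$, contributes a quadratic correction of order $K^4/n$ after being squared in the eigenvalue expansion; (b) the Hanson--Wright-type concentration of $\bbW_2^\intercal\Lambda_P\bbW_2/n$ around $\Tr(\Lambda_P)/n\,\bbI_n$ produces an $O_p(1)$ absolute fluctuation in $\lambda_i$, i.e.\ a relative $O_p(1/\mu_i)$ error; and (c) the Neumann remainder $\lambda^{-1}\bbS_2(\lambda\bbI-\bbS_2)^{-1}$ has norm $O(d_i/\mu_i)$, contributing again a multiplicative $O(d_i)$ correction. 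Assumption \ref{0829-1c} then guarantees that the unperturbed diagonal matrix $\lambda\Lambda_S^{-1}-\bbI_K$ has well-separated simple eigenvalues, so Bauer--Fike type perturbation theory transfers the matrix-level bound into a uniform scalar bound on each $\lambda_i/\mu_i-1$.

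The main obstacle will be step (a)--(b) under the subtle dependence between $\bbW_1$ and $\bbW_2$: both are linear images of the same $\bbX$ under projections with orthogonal but not independent row spaces, so the quartic form $\bbW_1\bbW_2^\intercal\Lambda_P\bbW_2\bbW_1^\intercal$ cannot be analyzed by a simple conditioning argument and must be expanded entrywise in $\bbX$ and controlled through careful fourth-moment bookkeeping. This is exactly the kind of dependent quadratic form for which the paper develops a dedicated CLT, and the rate $K^4/n$ (rather than the more optimistic $K/n$ one would expect for genuinely independent rows) appears to be the price of this dependence combined with the need for uniform control across all $K$ spikes.
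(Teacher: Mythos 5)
Your algebraic setup is fine: the Schur-complement determinant identity, the bound $\|\bbS_2\|=O_p(p/n)$, and the identification of the deterministic part of $\mathcal{R}(\lambda)$ as $\Tr(\Lambda_P)/(n\lambda)\,\bbI_K$ (valid because $\bbU_1\bbU_2^{\intercal}=0$ kills the cross terms in expectation) is indeed the correct source of the $d_i$ term. Note, however, that this is not the route the paper takes for Theorem \ref{0318-1}: the paper first strips off the nonspiked block by Weyl's inequality for singular values, absorbing it into the $O_p(d_i)$ term, and then analyzes the pure spiked $K\times K$ block $\bbC=\frac{1}{n}\Lambda_S^{1/2}\bbU_1\bbX\bbX^{\intercal}\bbU_1^{\intercal}\Lambda_S^{1/2}$ via $\det(\frac{1}{n}\bbU_1\bbX\bbX^{\intercal}\bbU_1^{\intercal}-\lambda\Lambda_S^{-1})=0$, the entrywise bound $\|\frac{1}{n}\bbU_1\bbX\bbX^{\intercal}\bbU_1^{\intercal}-\bbI_K\|_{\infty}=O_p(K/\sqrt{n})$, and the Leibniz formula; your $\mathcal{R}(\lambda)$/Neumann machinery is essentially what the paper reserves for the finer Theorem \ref{0408-2}, and the Weyl reduction is precisely what lets the paper avoid the dependent quartic form $\bbW_1\bbW_2^{\intercal}\Lambda_P\bbW_2\bbW_1^{\intercal}$ at this stage.

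The genuine gap is your step (a). The operator norm of $\frac{1}{n}\bbW_1\bbW_1^{\intercal}-\bbI_K$ is not $O_p(K^2/n)$: already a single diagonal entry, $\frac{1}{n}\sum_{k}\bigl[(\bbu_i^{\intercal}\bbx_k)^2-1\bigr]$, is a centered average of i.i.d.\ variables with variance of order $1/n$ and has a nondegenerate limit at scale $n^{-1/2}$ --- this is exactly the martingale term (\ref{1127.3}) whose CLT produces the Gaussian fluctuation in (\ref{h0408.4}). Under fourth-moment assumptions the available uniform control is the entrywise $O_p(K/\sqrt{n})$ bound (hence operator norm at most $O_p(K/\sqrt{n})$), never $O_p(K^2/n)$. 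Moreover this fluctuation enters $\lambda_i/\mu_i-1$ to \emph{first} order, not ``after being squared'': in your decoupled scalar equation it sits additively next to $\Tr(\Lambda_P)/(n\lambda)$, so it contributes $O_p(n^{-1/2})$ per spike ($O_p(K/\sqrt{n})$ uniformly over $i\le K$ by Chebyshev), and however one organizes the proof this $n^{-1/2}$ term is genuinely present and must be either tracked or dominated. Consequently your bookkeeping cannot produce the $K^4/n$ term (note also the internal arithmetic: squaring an $O_p(K^2/n)$ quantity would give $K^4/n^2$, not $K^4/n$), and as written your argument yields at best a bound of the form $O_p(d_i+K/\sqrt{n})$, which is weaker than (\ref{0318.1}) since $K\ll n^{1/6}$ implies $K/\sqrt{n}\gg K^4/n$. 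To recover the stated rate along the paper's lines you would have to replace (a) by the paper's treatment: feed the entrywise $O_p(K/\sqrt{n})$ bound into the Leibniz expansion of the $K\times K$ determinant after the Weyl reduction, which is where the paper's $K^4/n$ bookkeeping (and not any squaring of an operator-norm bound) actually lives.
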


\begin{rmk}\label{rk1}
As mentioned in the introduction, PCA is an important statistical tool for analyzing high-dimensional data. Several recent results on high-dimensional PCA are quite relevant to Theorem \ref{0318-1}. Recently \cite{B17} considered AIC and BIC criteria for selecting the number of significant components in high dimensional PCA when $p$ and $n$ are comparable.
Comparing to the paper \cite{B17}, Theorem \ref{0318-1} here covers Lemma 2.2(i) of \cite{B17} and we allow $K$ to tend to infinity.
Their assumption $\mu_{K+1}=\cdots=\mu_p=1$ is also relaxed to bounded eigenvalues here. In addition, checking the proof of Theorems 3.3 and 3.4 of \cite{B17}, we find that for general population covariance matrices, their criteria $\tilde A_j$ and $\tilde B_j$ for estimating the number of spikes may not work since it highly depends on the assumption $\mu_{K+1} =\cdots=\mu_p=1$, as demonstrated in Table \ref{tab3} given in Section \ref{sec6h}.
In addition,  Theorem \ref{0318-1} also covers part of Theorem 3.1 in \cite{S13} where it assumes normality for the data.
\end{rmk}

Note that $\frac{\lambda_i}{\mu_i}\stackrel{i.p.}\rightarrow 1$ does not imply that $\lambda_i$ is a good estimator of $\mu_i$  due to the fact that $\mu_i$ tends to infinity. Moreover, Theorem \ref{0318-1} does not precisely characterize how the nonspiked population eigenvalues affect the spiked sample eigenvalues. To see this, it is helpful to make a comparison with the conventional setting studied in  \cite{BS006}.  Consider the model (\ref{1005.1h}) and recall the assumptions of \cite{BS006} that $1+\sqrt{\gamma}<\mu_i=O(1)$ and $\gamma=\lim_{n\rightarrow \infty}\frac{p}{n}\in (0,\infty)$. It was shown in \cite{BS006} that
\begin{equation}\label{ab1}
\lambda_i\stackrel{a.s.}\rightarrow \mu_i+\frac{\gamma\mu_i}{\mu_i-1}.
\end{equation}
So  the effect of the population eigenvalues on the corresponding sample eigenvalues can be precisely characterized in the setting considered in \cite{BS006}.  On the other hand,  one cannot see the 
effect of the nonspiked population eigenvalues on the spiked sample eigenvalues from \eqref{ab1}. Note that if there are no spikes, then all the sample eigenvalues are not bigger than $(1+\sqrt{\gamma})^2 $ with probability one. When there are sufficiently large spikes, the sample spikes are pulled outside of the boundary $(1+\sqrt{\gamma})^2 $ due to the population spikes with probability one. Moreover, (\ref{ab1}) precisely quantifies the effect of the population spike. In view of this, one would ask whether there is a similar phenomenon for unbounded spikes. Indeed, it is natural to   imagine that for the case $\mu_i\rightarrow \infty$, the term $\frac{\gamma\mu_i}{\mu_i-1}$ will not disappear and thus one needs to subtract it from $\lambda_i$ in order to obtain the CLT.
Surprisingly, a more precise limit of $\lambda_i$ turns out to be determined not only by $\mu_i$ but also the nonspiked eigenvalues. This is very different from (\ref{ab1}) and can be seen clearly from  (\ref{h1023.2}) below.

We now characterize how the population eigenvalues including spiked eigenvalues and non-spiked eigenvalues affect the sample spiked eigenvalues.
 To this end,
corresponding to (\ref{a7}), partition $\bbU$ as  $\bbU= \left(\begin{matrix} \bbU_1 \\ \bbU_2\end{matrix}\right)$,  where $\bbU_1$ is the $K\times (p+l)$ submatrix of $\bbU$, and define
  \begin{equation}\label{a8}
  \bold\Sigma_1=\bbU_2^{\intercal}\bold\Lambda_P\bbU_2.
    \end{equation}
  For any distribution function $H$, its Stieltjes transform is defined by
$$m_H(z)=\int \frac{1}{\lambda-z}dH(\lambda), \quad \text{for all} \ z\in \mathbb{C}^+.$$ For any $\theta\neq 0$, let $\tilde m_{\theta}(z)$  be the unique solution to the following equation
\begin{eqnarray}\label{0331.2h}
\tilde m_{\theta}(z)=- \left(z-\frac{1}{n}tr(\bbI+\tilde m_{\theta}(z)\frac{\bold\Sigma_1}{\theta})^{-1}\frac{\bold\Sigma_1}{\theta}\right)^{-1}, \ \ z\in \mathbb{C}^+,
\end{eqnarray}
where $\mathbb{C}^+$ denotes the complex upper half plane and $\bold\Sigma_1$ is defined in (\ref{a8}). Here $\tilde m_{\theta}(z)$ is the limit of the Stieltjes transform of the empirical distribution function of the random matrix $\frac{1}{n\theta}\bbX^{\intercal}\bold{\Sigma}_1\bbX$, associated with the nonspiked population eigenvalues.  Indeed, as will be seen, for $\theta\gg\frac{p}{n}$,
$$\tilde m_{\theta}(z)-\frac{1}{n}\mathbb{E}tr(z\bbI-\frac{1}{n\theta}\bbX^{\intercal}\bold{\Sigma}_1\bbX)^{-1}\rightarrow 0$$
for $z\in \mathbb{C}^+$ by a slight modification of the proof of Appendix \ref{calculatemean}. One can also refer to (1.6) of  \cite{BPWZ2014b} or (6.12)-(6.15) of \cite{BS06} for (\ref{0331.2h}). One may see below that $\tilde m_{\theta}(z)$ describes the collective contribution of the nonspiked eigenvalues of $\bold\Sigma$ to the spiked sample eigenvalues.

 By (\ref{0331.2h}), we set $\theta_i$ to be the solution to
\begin{eqnarray}\label{1101.6}
\tilde m_{\theta_i}(1)+\frac{\theta_i}{\mu_i}=0,
\end{eqnarray}
  where $\tilde m_{\theta_i}(1)=\lim\limits_{z\in \mathbb{C}^+\rightarrow 1}\tilde m_{\theta_i}(z)$.
 It turns out that $\theta_i$ instead of $\mu_i$ is the more precise limit of the spiked sample eigenvalues $\lambda_i$. 
From (\ref{1101.6}) one can see that $\theta_i$ depends on $\mu_i$ as well as the nonspiked part $\bold\Sigma_1$.
Indeed, this point can be seen more clearly from (\ref{h1023.2}) below. To the best of our knowledge, such a dependence of $\theta_i$ on $\mu_i$ as well as the nonspiked part $\bold\Sigma_1$ has never been appeared in the literature before.

\begin{assu}\label{100817a}
 Assume that the following limits exist:
 $$\sigma_i=\lim_{p\rightarrow \infty}\sqrt{\sum\limits_{j=1}^{p+l}(\gamma_{4j}-3)u_{ij}^4+2},\quad \sigma_{ij}=\lim_{p\rightarrow \infty}\sum\limits_{s=1}^{p+l}(\gamma_{4s}-3)u_{is}^2u_{js}^2.
 $$
 \end{assu}

We are ready to state the asymptotic distribution of the spiked eigenvalues of $\bbS_n$.  Let $\bbu_i^{\intercal}$ be the $i$-th row of $\bbU$ with $u_{ij}$ being the $(i,j)$-th entry of $\bbU$.

\begin{thm}\label{0408-2}
Suppose that Assumptions \ref{0829-1a}, \ref{0829-1c}, and \ref{100817a} hold. Moreover, either Assumption \ref{0829-1} or Assumption {\red $2'$}  hold. 
 Then  for all $i=1,2,..., K$,
\begin{eqnarray}\label{h0408.4}
\sqrt n\frac{\lambda_i-\theta_i}{\theta_i}\stackrel{D}{\longrightarrow} N\left(0, \sigma_i^2\right).
\end{eqnarray}
Moreover, for any fixed $r\ge 2$
\begin{eqnarray}\label{0831.3}
\left(\sqrt{n}\frac{\lambda_1-\theta_1}{\theta_1},..., \sqrt{n}\frac{\lambda_r-\theta_r}{\theta_r}\right)\stackrel{D}{\longrightarrow} N\left(0,\bold\Sigma^{(r)}\right),
\end{eqnarray}
 where $\bold\Sigma^{(r)}=(\bold\Sigma^{(r)}_{ij})$ with
\begin{eqnarray*}\bold\Sigma^{(r)}_{ij}=
\begin{cases}
\sigma_i^2, & i=j \cr
\sigma_{ij}, & i\neq j,
\end{cases}
\end{eqnarray*}
  \end{thm}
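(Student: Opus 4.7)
My starting point is to reduce everything to an $n\times n$ problem. The matrices $\bbS_n$ and $\tilde{\bbS}_n:=\tfrac{1}{n}\bbX^{\intercal}\bbU^{\intercal}\boldsymbol{\Lambda}\bbU\bbX$ share the same non-zero spectrum, and the decomposition $\bbU^{\intercal}\boldsymbol{\Lambda}\bbU=\bbU_1^{\intercal}\boldsymbol{\Lambda}_S\bbU_1+\boldsymbol{\Sigma}_1$ turns $\tilde{\bbS}_n$ into a rank-$K$ perturbation of $\bbA:=\tfrac{1}{n}\bbX^{\intercal}\boldsymbol{\Sigma}_1\bbX$:
\[
\tilde{\bbS}_n=\bbA+\tfrac{1}{n}\bbW_1^{\intercal}\boldsymbol{\Lambda}_S\bbW_1,\qquad \bbW_1:=\bbU_1\bbX.
\]
Applying the Sylvester determinant identity, $\lambda$ is a non-trivial eigenvalue of $\bbS_n$ iff the master equation
\[
\det\!\bigl(\boldsymbol{\Lambda}_S^{-1}-\bbM(\lambda)\bigr)=0,\qquad \bbM(\lambda):=\tfrac{1}{n}\bbW_1(\lambda\bbI_n-\bbA)^{-1}\bbW_1^{\intercal}\in\mathbb{R}^{K\times K},
\]
holds.

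The second step is a deterministic equivalent for $\bbM(\lambda)$. I would show that, uniformly for $\lambda$ in a suitable neighbourhood of $\theta_i$, the diagonal entry $[\bbM(\lambda)]_{ii}=\tfrac{1}{n}\bbu_i^{\intercal}\bbX(\lambda\bbI-\bbA)^{-1}\bbX^{\intercal}\bbu_i$ concentrates around $-\tilde m_\lambda(1)/\lambda$, while the off-diagonals are $o_p(\theta_i^{-1})$ thanks to $\bbu_i^{\intercal}\bbu_j=\delta_{ij}$. Since Theorem~\ref{0318-1} already localises $\lambda_i$ to $\theta_i(1+o(1))$, the defining identity $\tilde m_{\theta_i}(1)+\theta_i/\mu_i=0$ forces $\mu_i^{-1}=-\tilde m_{\theta_i}(1)/\theta_i$, so the $K\times K$ master equation reduces at leading order to the scalar equation $\mu_i^{-1}=[\bbM(\lambda_i)]_{ii}$. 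A one-step Taylor expansion around $\theta_i$ gives
\[
\sqrt n\,\frac{\lambda_i-\theta_i}{\theta_i}=-\frac{\theta_i^{-1}\sqrt n\bigl([\bbM(\theta_i)]_{ii}-\mu_i^{-1}\bigr)}{[\bbM'(\theta_i)]_{ii}}+o_p(1),
\]
and a direct differentiation of $(\lambda\bbI-\bbA)^{-1}$ shows that $[\bbM'(\theta_i)]_{ii}$ concentrates at a deterministic constant which, together with the equation defining $\theta_i$, exactly absorbs the $\theta_i$ in the normalisation.

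The heart of the argument, and the main obstacle, is a CLT for the random quadratic form $Q_i:=\sqrt n\bigl([\bbM(\theta_i)]_{ii}-\mu_i^{-1}\bigr)$: the resolvent $\bbG(\theta_i):=(\theta_i\bbI-\bbA)^{-1}$ is \emph{not} independent of the vector $\bbX^{\intercal}\bbu_i$, because both are built from the same $\bbX$. To handle this I would exploit the orthogonality $\bbU_2\bbu_i=\mathbf{0}$ (an immediate consequence of $\bbU\bbU^{\intercal}=\bbI_p$), which implies that $\bbA$ depends on each column $\bbx_k$ only through its projection onto the orthogonal complement of $\bbu_i$; in the Gaussian case this projection is independent of $y_k:=\bbu_i^{\intercal}\bbx_k$, and in general the residual fourth-order dependence is precisely what the paper's new quadratic-form CLT is designed to absorb. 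Decomposing
\[
Q_i=\tfrac{1}{\sqrt n}\sum_{k}(y_k^{2}-1)G_{kk}(\theta_i)+\tfrac{2}{\sqrt n}\sum_{k<l}y_k y_l\, G_{kl}(\theta_i)+o_p(1),
\]
a conditional Lindeberg/martingale argument (applied to the second sum) together with a direct variance computation (for the first sum) shows that the diagonal contribution carries the fourth-cumulant term $\sum_{j=1}^{p+l}(\gamma_{4j}-3)u_{ij}^4$ while the off-diagonal sum contributes the Gaussian part $2$, yielding the variance $\sigma_i^2$.

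Finally, for the joint statement I would invoke the Cram\'er--Wold device: any linear combination $\sum_{i=1}^r a_i Q_i$ is again a quadratic form to which the same CLT applies. The cross-covariance of $Q_i$ and $Q_j$ for $i\neq j$ is driven only by the diagonal sums, since the off-diagonal (martingale-type) sums involve $y_k^{(i)}y_l^{(i)}$ versus $y_k^{(j)}y_l^{(j)}$ and decouple in expectation; a direct computation under Assumption~\ref{0829-1a} gives
\[
\mathrm{Cov}\!\bigl((\bbu_i^{\intercal}\bbx_k)^2,(\bbu_j^{\intercal}\bbx_k)^2\bigr)=\sum_{s=1}^{p+l}(\gamma_{4s}-3)u_{is}^2 u_{js}^2\longrightarrow \sigma_{ij},
\]
producing exactly the off-diagonal entries of $\boldsymbol{\Sigma}^{(r)}$ and completing the proof.
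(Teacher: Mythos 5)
Your plan is, in substance, the paper's own proof: the Sylvester step yields exactly the paper's determinantal equation $\det\bigl(\Lambda_S^{-1}-\tfrac{1}{n}\bbU_1\bbX(\lambda\bbI-\bbA)^{-1}\bbX^{\intercal}\bbU_1^{\intercal}\bigr)=0$, the reduction to a scalar equation via diagonal dominance, the Taylor step in which $\tfrac{\theta_i^2}{n}\bbu_i^{\intercal}\bbX(\theta_i\bbI-\bbA)^{-2}\bbX^{\intercal}\bbu_i\approx-\tilde m_{\theta_i}(1)\approx 1$ absorbs the normalisation, the quadratic-form CLT for $\tfrac{\theta_i}{n}\bbu_i^{\intercal}\bbX(\theta_i\bbI-\bbA)^{-1}\bbX^{\intercal}\bbu_i+\tilde m_{\theta_i}(1)$, and the Cram\'er--Wold argument with cross-covariance $\sum_s(\gamma_{4s}-3)u_{is}^2u_{js}^2$ are all the same steps the paper carries out (its Theorem \ref{1101-1} plus the determinant/Leibniz reduction).

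However, one step of your CLT, as stated, would fail: the claim that the off-diagonal sum contributes the Gaussian part $2$. In this regime $\|\bbA\|/\theta_i=O_p(d_i)\to 0$, so $G=(\theta_i\bbI-\bbA)^{-1}$ is nearly $\theta_i^{-1}\bbI$: one has $\theta_iG_{kk}=1+O_p(d_i)$, while for $k\neq l$, $G_{kl}\approx\tfrac{1}{n\theta_i^{2}}\bbx_k^{\intercal}\bold\Sigma_1\bbx_l$, so that $\tfrac{\theta_i^{2}}{n}\sum_{k\neq l}G_{kl}^{2}=O_p\bigl(p/(n\theta_i^{2})\bigr)=o_p(1)$. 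Hence the conditional variance of your off-diagonal martingale $\tfrac{2}{\sqrt n}\sum_{k<l}y_ky_lG_{kl}$ vanishes after the $\theta_i$-normalisation and that sum is $o_p(1)$, not asymptotically $N(0,2)$; the entire variance, including the $2$, is carried by the diagonal sum, since $\mathrm{Var}(y_k^{2})=\mathbb{E}(\bbu_i^{\intercal}\bbx_k)^{4}-1=\sum_j(\gamma_{4j}-3)u_{ij}^{4}+2$. This is precisely how the paper argues: in the proof of Theorem \ref{1101-1} all resolvent-coupled terms (the $I_2$ term and the correction to $I_3$) are shown negligible and the CLT reduces to the i.i.d.\ sum $\tfrac{1}{\sqrt n}\sum_k\bigl[(\bbu_i^{\intercal}\bbx_k)^{2}-1\bigr]$. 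Your stated $\sigma_i^2$ is correct, but the mechanism you describe would not produce it, and a martingale CLT aimed at the off-diagonal sum returns a degenerate limit. Two further points are only implicit in your sketch but are where much of the real work lies: the centering must be shown accurate to $o(n^{-1/2})$ (the paper proves $\sqrt n\bigl(\tfrac{1}{n}\mathbb{E}\,\mathrm{tr}(\bbI-\bbA/\theta)^{-1}+\tilde m_{\theta}(1)\bigr)\to 0$ by a Lindeberg swap to Gaussian entries), and the passage from the $K\times K$ determinant to the scalar equation with $K\to\infty$ requires the quantitative $O_p(K/\sqrt n)$ off-diagonal bounds together with a Leibniz-expansion estimate, which is where the conditions $K\ll n^{1/6}$ and $K^2d_K\to 0$ enter.
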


 It follows from (\ref{0331.2h}) and (\ref{1101.6}) that $\tilde m_{\theta_i}(1)\rightarrow -1$. Therefore $\frac{\theta_i}{\mu_i}\rightarrow 1$. However, we can not replace $\theta_i$ by $\mu_i$ in (\ref{0831.3}) directly because the convergence rate of $\frac{\theta_i}{\mu_i}$ to 1 is unknown. Indeed, by (\ref{0331.2h}), we have
 \begin{eqnarray}\label{h1023.1}
\theta=-\frac{\theta}{\tilde m_{\theta}(1)}+\frac{p-K}{n}\int\frac{tdF_{\bold\Lambda_P}(t)}{1+t\tilde m_{\theta}(1)\theta^{-1}},
\end{eqnarray}
where $F_{\bold\Lambda_P}$ is the empirical spectral distribution of $\bold\Lambda_P$. Here for any $n\times n$ symmetric matrix $\bbA$ with real eigenvalues, the empirical spectral distribution (ESD) of $\bbA$ is defined as
$$F_{\bbA}(x)=\frac{1}{n}\sum_{i=1}^n I_{\{\lambda_i(\bbA)\le x\}}.$$
Together with (\ref{1101.6}), we conclude that
 \begin{eqnarray}\label{h1023.2}
\theta_i=\mu_i(1+\frac{p-K}{n}\int\frac{tdF_{\bold\Lambda_P}(t)}{\mu_i-t}).
\end{eqnarray}
By the Taylor's expansion we have
\begin{eqnarray}\label{1129.2}
\frac{\theta_i}{\mu_i}=1+ ff_i+O(\frac{p}{n\mu_i^{2}}),
\end{eqnarray}
where
$$f=\frac{1}{p-K}\sum_{j=K+1}^{p}\mu_j  \quad\mbox{and} \quad  f_{i}=\frac{p-K}{n\mu_i}.$$
In particular, for the special case $\mu_{K+1}=...=\mu_{p}=1$,  (\ref{h1023.2}) yields that
 \begin{equation}
 \label{a271017}
 \theta_i=\mu_i(1+\frac{p-K}{n(\mu_i-1)}).
 \end{equation}
It is interesting to note that, although here the spiked eigenvalues $\mu_1,\cdots,\mu_K$ are divergent, this is consistent with the right hand side of (\ref{ab1}), which is for the conventional setting of bounded spiked eigenvalues.
It then follows from (\ref{1129.2}) that
 \begin{equation}
 \label{a4}
  \sqrt n\Big(\frac{\lambda_i}{\mu_i}-1-ff_i+O(\frac{p}{n\mu_i^2})\Big)\stackrel{D}{\rightarrow} N\left(0, \sigma_i^2\right).
 \end{equation}

\begin{rmk}
We note that Assumption \ref{100817a} is not needed if we consider the individual asymptotic distribution of the spiked sample eigenvalues. To see this, it suffices to normalize $(\lambda_i-\theta_i)/\theta_i$ by  $\sigma_i=\sqrt{\sum\limits_{j=1}^{p+l}(\gamma_{4j}-3)u_{ij}^4+2}$. Moreover,  the joint distribution of $\frac{\lambda_i-\theta_i}{\sigma_i\theta_i}$, $i=1,...,r$ tends to the normal distribution with the covariance matrix being the correlation matrix corresponding to $\bold\Sigma^{(r)}$.
 \end{rmk}

\begin{rmk}\label{1127-1}
It is helpful to compare the above theorem with Theorem 3.1 of \cite{WF16}. Besides the difference between the models in (\ref{a3}) and (\ref{shen}), one of the key differences is that $\sigma_i^2$ in (\ref{a4})  depends on the entries of the eigenvector matrix $\bbU$ while the variance in Theorem 3.1 of \cite{WF16} does not depend on it. This is due to the fact that \cite{WF16} assumes that $\bbU=\bbI$. Secondly, Theorem 3.1 of \cite{WF16} involves $O_p(\frac{\sqrt{p}}{\sqrt{n}\mu_i})$ which reduces to $O(\frac{p}{n\mu_i^2})$ (essentially $O(\frac{1}{\mu_i})$) in (\ref{a4}) by dropping the additional $\frac{\sqrt{p}}{\sqrt{n}}$. Thirdly we also allow $K$ to diverge. Fourthly \cite{WF16} assumes $x_{ij}$ to be subGaussian random variables while Theorem \ref{0408-2} holds under the bounded fourth moment assumption.
\end{rmk}


In view of (\ref{1129.2}) we need to estimate $f$ and $f_i$ in practice. A natural estimator of $f_i$ is $\frac{p-K}{n\lambda_i}$ by Theorem \ref{0318-1}. For $f$, one can use
\begin{equation}\label{a5}
\hat f=\frac{\frac{1}{n}tr(\Gamma\bbX\bbX^{\intercal}\Gamma)-\sum_{i=1}^K\lambda_i}{p-K-pK/n}
\end{equation}
which was proposed in \cite{WF16}. When $p\sim n$, by Proposition \ref{0103-1} in the next section, $K$ can be estimated accurately.

Moreover, Theorem \ref{0408-2} can be extended to the case when the population eigenvalues $\mu_i$ have multiplicity more than one.\begin{assu}\label{0829-1h}
Suppose that $K\ll n^{1/6}$,
 $\alpha_{\mathcal L}=\mu_{K}=...=\mu_{K-n_{\mathcal L}}<\alpha_{\mathcal{L}-1}=\mu_{K-n_{\mathcal L}+1}...<\alpha_{1}=\mu_{n_1}=...=\mu_1$, and there exists a constant $c$ such that $\frac{\alpha_{i-1}}{\alpha_i}\ge c>1$, $i=1,2,...,\mathcal L$. Moreover, $n_1$,..., $n_{\mathcal L}$ are finite.
 \end{assu}

 \begin{assu}\label{100817b}
 Suppose that the following limits exist
 $$G(r_i,k_1,k_2,l_1,l_2)=\lim\limits_{n\rightarrow \infty}n^2\times \Cov(\bbu_{r_i+k_1}^{\intercal}\bbx_1\bbu_{r_i+l_1}^{\intercal}\bbx_1,\bbu_{r_i+k_2}^{\intercal}\bbx_1\bbu_{r_i+l_2}^{\intercal}\bbx_1).$$
  \end{assu}

If either the fourth moments $\gamma_{4s}=3$, $s=1,...,p+l$ or the entries of the population eigenvectors satisfy $\min\limits_{r\in\{k_1,k_2,l_1,l_2\}}\max\limits_{j}|u_{r_i+r,j}|=o(1)$, then
$$g(r_i,k_1,k_2,l_1,l_2)=\begin{cases}1 &\  \text{if}\ k_1=k_2 \ \text{and}\ l_1=l_2 \ \text{or}\ k_1=l_2 \ \text{and}\ l_1=k_2 \\
0& \text{otherwise}.
\end{cases}$$

 Then we have the following result.
 \begin{thm}\label{0831-1}
Suppose that Assumptions \ref{0829-1a},   \ref{0829-1h}  and  \ref{100817b} hold. Moreover, either Assumption \ref{0829-1} or Assumption {\red $2'$}  holds. Let
$$\theta_i=\alpha_i(1+\frac{p-K}{n}\int\frac{tdF_{\bold\Lambda_P}(t)}{\alpha_i-t}).$$
Let $r_i=\sum_{j=0}^{i-1}n_j$, for $i=1,2,..., \mathcal L$. Then
\begin{eqnarray}\label{0408.4}
 \frac{\sqrt n}{\theta_i}(\lambda_{r_i+1}-\theta_i, \lambda_{r_i+2}-\theta_i,..., \lambda_{r_i+n_i}-\theta_i)\stackrel{D}{\rightarrow}  \mathcal{R}_i,
\end{eqnarray}
  where $\mathcal{R}_i$ are the eigenvalues of $n_i\times n_i$ Gaussian matrix $\mathfrak{S}_i$ with $\mathbb{E}\mathfrak{S}_i=0$ and the covariance of the $(\mathfrak{S}_i)_{k_1, l_1}$ and $(\mathfrak{S}_i)_{k_2, l_2}$ being $G(r_i,k_1,k_2,l_1,l_2)$.
\end{thm}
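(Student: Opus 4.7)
The plan is to extend the determinantal-equation argument of Theorem \ref{0408-2} to the block setting dictated by the multiplicities $n_1,\ldots,n_{\mathcal L}$. Since $\bbS_n$ and the $n\times n$ matrix $\bbM:=\frac{1}{n}\bbX^\intercal\bbU^\intercal\bold\Lambda\bbU\bbX$ share their nonzero spectrum, I would decompose $\bbM=\bbQ+\bbR^\intercal\bbR$ with $\bbQ:=\frac{1}{n}\bbX^\intercal\bold\Sigma_1\bbX$ and $\bbR:=\frac{1}{\sqrt n}\bold\Lambda_S^{1/2}\bbU_1\bbX$. The Schur complement identity then shows that every sample eigenvalue $\lambda$ outside $\operatorname{spec}(\bbQ)$ satisfies
\[
\det(\bold\Lambda_S^{-1}-\bbH(\lambda))=0,\qquad \bbH(\lambda):=\frac{1}{n\lambda}\bbU_1\bbX\bigl(\bbI_n-\tfrac{1}{n\lambda}\bbX^\intercal\bold\Sigma_1\bbX\bigr)^{-1}\bbX^\intercal\bbU_1^\intercal.
\]

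Block-partition $\bold\Lambda_S=\operatorname{diag}(\alpha_1\bbI_{n_1},\ldots,\alpha_{\mathcal L}\bbI_{n_{\mathcal L}})$ and $\bbH(\lambda)=[\bbH_{jk}(\lambda)]$ conformally. Theorem \ref{0318-1} confines the $n_i$ relevant sample eigenvalues to an $O(\theta_i/\sqrt n)$ window of $\theta_i$; on that window Assumption \ref{0829-1h} keeps each off-cluster diagonal block $\alpha_j^{-1}\bbI_{n_j}-\bbH_{jj}(\lambda)$, $j\ne i$, uniformly invertible, and a further Schur reduction collapses the characteristic equation to $\det(\alpha_i^{-1}\bbI_{n_i}-\bbH_{ii}(\lambda)+\bbE(\lambda))=0$ with cross-block remainder $\bbE(\lambda)=o_P(n^{-1/2})$ uniformly (controlled by Assumption \ref{0829-1}). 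Next I would establish the fluctuation expansion
\[
\bbH_{ii}(\lambda)=-\frac{\tilde m_\lambda(1)}{\lambda}\,\bbI_{n_i}+\frac{1}{\sqrt n}\,\widetilde{\mathfrak S}_i(\lambda)+o_P(n^{-1/2}),
\]
whose deterministic piece follows from the Stieltjes-transform equation (\ref{0331.2h}) plus standard concentration for quadratic forms, and whose symmetric fluctuation $\widetilde{\mathfrak S}_i(\lambda)$ comes from the paper's CLT for random quadratic forms with $\bbX$-dependent resolvents. Since (\ref{1101.6}) forces $\alpha_i^{-1}+\tilde m_{\theta_i}(1)/\theta_i=0$, a first-order Taylor expansion about $\theta_i$ of the scalar $\alpha_i^{-1}+\tilde m_\lambda(1)/\lambda$ (with nonzero derivative $c_i$) turns the reduced equation into
\[
\det\bigl(c_i(\lambda-\theta_i)\bbI_{n_i}-n^{-1/2}\widetilde{\mathfrak S}_i(\theta_i)+o_P(n^{-1/2})\bigr)=0.
\]
Thus $\sqrt n(\lambda_{r_i+j}-\theta_i)/\theta_i$ equals, up to $o_P(1)$, the $j$-th eigenvalue of the symmetric matrix $(c_i\theta_i)^{-1}\widetilde{\mathfrak S}_i(\theta_i)$, and a Hoffman--Wielandt bound yields (\ref{0408.4}) once this matrix is shown to converge in distribution to $\mathfrak S_i$.

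The main obstacle is precisely this joint CLT for the entries of $\widetilde{\mathfrak S}_i(\theta_i)$: each entry is the bilinear form $(n\theta_i)^{-1}\bbu_{r_i+k}^\intercal\bbX\bigl(\bbI-\tfrac{1}{n\theta_i}\bbX^\intercal\bold\Sigma_1\bbX\bigr)^{-1}\bbX^\intercal\bbu_{r_i+\ell}$, in which the side vectors $\bbu^\intercal\bbX$ and the resolvent share the same $\bbX$, so independence arguments fail and the paper's dependent-quadratic-form CLT is indispensable. I would apply it jointly via a Cram\'er--Wold reduction on arbitrary symmetric $n_i\times n_i$ test matrices; the covariance calculation then collapses to $n^2\,\Cov(\bbu_{r_i+k_1}^\intercal\bbx_1\,\bbu_{r_i+l_1}^\intercal\bbx_1,\bbu_{r_i+k_2}^\intercal\bbx_1\,\bbu_{r_i+l_2}^\intercal\bbx_1)$, matching $G(r_i,k_1,k_2,l_1,l_2)$ under Assumption \ref{100817b}. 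A secondary difficulty is uniform error control as $K$ diverges, addressed by $K\ll n^{1/6}$ together with the spectral gap.
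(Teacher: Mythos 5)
Your proposal follows essentially the same route as the paper's proof: the same determinantal equation $\det(\bold\Lambda_S^{-1}-\frac{1}{n}\bbU_1\bbX\bbB^{-1}(\lambda)\bbX^{\intercal}\bbU_1^{\intercal})=0$, localization of the $n_i$ relevant eigenvalues near $\theta_i$ via Theorem \ref{0318-1} and the gap Assumption \ref{0829-1h}, a joint CLT for the dependent quadratic forms filling the $n_i\times n_i$ block (Theorem \ref{1101-1} together with Assumption \ref{100817b}), and a reduction of the perturbed determinant equation to the eigenvalue problem for the limiting Gaussian matrix $\mathfrak S_i$. The only divergence is cosmetic: the paper finishes by Skorokhod strong representation and the Bai--Yao argument \cite{BY08}, whereas you finish by a Taylor expansion of the scalar equation plus a Hoffman--Wielandt/continuity step (where you would still need to note that the $\lambda$-dependent error is uniformly $o_P(n^{-1/2})$ on the window and that the normalizing constant $c_i\theta_i\rightarrow 1$, both of which hold).
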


The proof of Theorem \ref{0408-2} requires new technical tools.  The following CLT for a type of random quadratic forms, where the random vectors and random matrices involved are dependent, plays a key role in the proof. This result can be of independent interest.

\begin{thm}\label{1101-1}
Suppose that Assumption \ref{0829-1a} holds and the spectral norm of $\Sigma_1$ is bounded. 
In addition, suppose that there exist orthogonal unit vectors  $\bbw_1$ and $\bbw_2$ such that $\bbw_1^{\intercal}\bbU_2^{\intercal}=\bbw_2^{\intercal}\bbU_2^{\intercal}=0$ and $\bbw_1^{\intercal}\bbw_2=0$.   If $\frac{\theta}{\frac{p+l}{n}}\rightarrow \infty$ and $\theta\rightarrow \infty$, then
\begin{eqnarray}\label{1101.1}
\frac{\sqrt n}{\tilde \sigma_1}\Big(\bbw_1^{\intercal}\bbX(n\bbI-\bbX^{\intercal}\frac{\bold\Sigma_1}{\theta}\bbX)^{-1}\bbX^{\intercal}\bbw_1+\tilde m_{\theta}(1)\Big)\stackrel{D}{\rightarrow} N\left(0, 1\right)
\end{eqnarray}
and
\begin{eqnarray}\label{1101.2}
\frac{\sqrt n}{\tilde\sigma_{12}}\bbw_1^{\intercal}\bbX(n\bbI-\bbX^{\intercal}\frac{\bold\Sigma_1}{\theta}\bbX)^{-1}\bbX^{\intercal}\bbw_2\stackrel{D}{\rightarrow} N\left(0, 1\right)
\end{eqnarray}
where  $\tilde \sigma_1^2=\sum_{j=1}^{p+l}[(\gamma_{4j}-3)w_{1j}^4]+2$, $\tilde\sigma_{12}^2= \sum\limits_{s=1}^{p+l}[(\gamma_{4s}-3)w_{1s}^2w_{2s}^2]+1$ and $w_{ij}$ is the $j$-th element of $\bbw_i$, $i=1,2$.
\end{thm}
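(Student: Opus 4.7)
The plan is to reduce the quadratic form to a centered sum of independent random variables plus asymptotically negligible remainders, and then apply the Lindeberg CLT. The key structural input is that $\bbU_2\bbw_1 = \bbU_2\bbw_2 = 0$, which forces $\bold\Sigma_1\bbw_1 = \bold\Sigma_1\bbw_2 = 0$ since $\bold\Sigma_1 = \bbU_2^{\intercal}\bold\Lambda_P\bbU_2$; consequently the linear functionals $y_j := \bbw_1^{\intercal}\bbx_j$ and $z_j := \bbw_2^{\intercal}\bbx_j$ are uncorrelated with every entry of the bulk matrix $A := \frac{1}{n\theta}\bbX^{\intercal}\bold\Sigma_1\bbX$. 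Writing $G := (\bbI - A)^{-1}$ so that $(n\bbI-\bbX^{\intercal}\tfrac{\bold\Sigma_1}{\theta}\bbX)^{-1} = n^{-1}G$, the quantity in (\ref{1101.1}) becomes $\tfrac{1}{\sqrt n}\bigl(\sum_{j,k} y_j y_k G_{jk} - \Tr G\bigr)$ plus the deterministic error $\sqrt n(n^{-1}\Tr G + \tilde m_\theta(1)) = o_p(1)$ coming from standard concentration of the Stieltjes transform of $A$ around $\tilde m_\theta$.

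I would first split $\sum_{j,k}y_jy_kG_{jk} - \Tr G = \sum_j (y_j^2-1)G_{jj} + \sum_{j\ne k} y_j y_k G_{jk}$ and argue that the off-diagonal piece is negligible after dividing by $\sqrt n$. Under $\theta\gg (p+l)/n$ and boundedness of $\|\bold\Sigma_1\|$, one has $\|A\| = o_p(1)$ and $G = \bbI + A + O_p(\|A\|^2)$ in operator norm. The second-moment bound $\mathbb{E}A_{jk}^2 \lesssim \Tr(\bold\Sigma_1^2)/(n\theta)^2 \lesssim p/(n\theta)^2$ gives $\mathbb{E}\sum_{j\ne k}A_{jk}^2 \lesssim p/\theta^2$, and a leave-two-out approximation $G_{jk}\approx G_{jk}^{(jk)}$ (which makes $y_jy_k$ independent of the replacement) yields $\operatorname{Var}\bigl(n^{-1/2}\sum_{j\ne k}y_jy_kG_{jk}\bigr) \lesssim p/(n\theta^2) = o(1)$, since the hypotheses imply $\theta\gg\sqrt{p/n}$.

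For the diagonal piece I would invoke the Schur-complement identity
\[
G_{jj} = \Bigl(1 - \tfrac{1}{n\theta}\bbx_j^{\intercal}\bold\Sigma_1\bbx_j - \tfrac{1}{n^2\theta^2}\bbx_j^{\intercal}\bold\Sigma_1\bbX^{(j)}G^{(j)}\bbX^{(j)\intercal}\bold\Sigma_1\bbx_j\Bigr)^{-1},
\]
where $\bbX^{(j)}$ omits the $j$-th column of $\bbX$ and $G^{(j)} := (\bbI-\tfrac{1}{n\theta}\bbX^{(j)\intercal}\bold\Sigma_1\bbX^{(j)})^{-1}$ is independent of $\bbx_j$. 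Because $\bold\Sigma_1\bbw_1=0$, the orthogonal decomposition $\bbx_j = y_j\bbw_1 + z_j\bbw_2 + \bbx_j^{\perp}$ with $\bbx_j^{\perp} := (\bbI-\bbw_1\bbw_1^{\intercal}-\bbw_2\bbw_2^{\intercal})\bbx_j$ gives $\bold\Sigma_1\bbx_j = \bold\Sigma_1\bbx_j^{\perp}$, so the denominator depends on $\bbx_j$ only through $\bbx_j^{\perp}$. Bai--Silverstein concentration of quadratic forms combined with the deterministic limit of $G^{(j)}$ given by $\tilde m_\theta$ then yields $G_{jj}=c_n + O_p(n^{-1/2})$ uniformly in $j$ for a deterministic $c_n \to 1$. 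Substituting produces $\tfrac{1}{\sqrt n}\sum_j (y_j^2-1)G_{jj} = \tfrac{c_n}{\sqrt n}\sum_j (y_j^2-1) + o_p(1)$, and the leading term is a normalized sum of i.i.d.\ centered random variables of variance $\operatorname{Var}(y_j^2) = \sum_s(\gamma_{4s}-3)w_{1s}^4+2=\tilde\sigma_1^2$ (a direct fourth-moment computation using Assumption \ref{0829-1a} and $\|\bbw_1\|=1$), so the Lindeberg CLT delivers the $N(0,\tilde\sigma_1^2)$ limit. The companion assertion (\ref{1101.2}) is obtained identically with the diagonal sum replaced by $\sum_j y_j z_j G_{jj}$; since $\mathbb{E}[y_j z_j] = \bbw_1^{\intercal}\bbw_2 = 0$ no recentering is needed, and an analogous calculation gives $\operatorname{Var}(y_j z_j) = \sum_s(\gamma_{4s}-3)w_{1s}^2w_{2s}^2+1 = \tilde\sigma_{12}^2$.

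The main obstacle is controlling the weak dependence between $y_j$ and $G_{jj}$ outside the Gaussian regime. In the Gaussian case $y_j$ is independent of $\bbx_j^{\perp}$ (being uncorrelated jointly Gaussian), hence of the Schur denominator, making the decoupling immediate; under only a bounded fourth moment one has uncorrelatedness but not independence, and the higher-moment dependence must be absorbed into the $o_p$ remainders. I would handle this via a standard truncation of the entries $\bbx_{ij}$ at level $n^{1/4}$ (which preserves the CLT under Assumption \ref{0829-1a}), combined with the component-wise independence of the $\bbx_{ij}$ exploited along the $(p+l-2)$-dimensional orthogonal complement of $\Span\{\bbw_1,\bbw_2\}$ so as to push the stray fourth-moment contributions into an error of order $n^{-1/2}$.
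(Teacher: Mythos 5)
Your proposal is sound and, at its core, follows the same route as the paper: both arguments hinge on $\bold\Sigma_1\bbw_1=\bold\Sigma_1\bbw_2=0$, both use a leave-one-column-out (Schur complement) analysis to show that the off-diagonal and resolvent-correction terms are negligible, and both reduce the fluctuation to the i.i.d.\ sum $\tfrac{1}{\sqrt n}\sum_k\bigl[(\bbw_1^{\intercal}\bbx_k)^2-1\bigr]$ (resp.\ $\tfrac{1}{\sqrt n}\sum_k\bbw_1^{\intercal}\bbx_k\,\bbw_2^{\intercal}\bbx_k$), whose fourth-moment variance gives exactly $\tilde\sigma_1^2$ and $\tilde\sigma_{12}^2$. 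The packaging differs in two places. First, you split diagonal/off-diagonal and invoke the Lindeberg CLT directly, whereas the paper runs a martingale-difference decomposition $\sum_k(\mathbb{E}_k-\mathbb{E}_{k-1})$ with the terms $I_1,I_2,I_3$; these are essentially equivalent, and your version is arguably cleaner to state. Second, and more substantively, you center by the random trace $\Tr(\bbI-A)^{-1}$ and replace $G_{jj}$ by a deterministic $c_n$, which lets you avoid the paper's Lindeberg interpolation to a Gaussian matrix $\bbX^0$ (used there precisely because, outside the Gaussian case, $\bbw_1^{\intercal}\bbX$ is only uncorrelated with, not independent of, $\bbA$); but this does not remove the real work, it relocates it: you still need the bias estimate $\sqrt n\,\bigl|\tfrac1n\mathbb{E}\Tr(\bbI-A)^{-1}+\tilde m_{\theta}(1)\bigr|\to 0$, which is exactly the paper's ``Calculation of the Mean'' section and is a quantitative self-consistent-equation argument, not mere concentration of the Stieltjes transform (concentration only controls the fluctuation of $\Tr G$ about its mean, which is the easy half). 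So the one step you wave through as ``standard'' is where the paper spends a page; it is true and provable by the resolvent expansion the paper gives (or by citing known $o(n^{-1/2})$ rates for expected Stieltjes transforms), but in a complete write-up you must supply that estimate, as well as a careful treatment of the cross term $\tfrac{1}{\sqrt n}\sum_j(y_j^2-1)(G_{jj}-c_n)$, whose mean is nonzero only through the fourth-moment coupling (of size $O(1/(n\theta))$ per term, hence harmless) — the diagonal-coupling computation you sketch does close this gap.
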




\subsection{Tracy-Widow law for the largest nonspiked eigenvalue of $\bbS_n$}

We now turn to the limiting distribution of the largest nonspiked eigenvalue of the sample covariance matrix $\bbS_n$. The limiting law is of interest in its own right and it is also important for the estimation of the number of the spikes.  To this end we introduce additional assumptions.

\begin{assu}\label{0829-1b}
There exist constants $c_k$ such that $\mathbb{E}|\bbx_{ij}|^k\le c_k$ for all $k\in \mathbb{N}^+$.
\end{assu}

 \begin{assu}\label{1128-2} Recall (\ref{a7}) and (\ref{a8}).
Let $m_{\Sigma_1}(z)$ be the Steiltjes transform of the limit of the spectral distribution (LSD) of $\frac{1}{n}\bbX^{\intercal}\bold\Sigma_1\bbX$  and let $\gamma_+$ be the right most end point of the LSD of $\bbX^{\intercal}\bold\Sigma_1\bbX$. Suppose that
\begin{eqnarray}\label{1010.1h}
\lim\sup_{n}\mu_{K+1} d<1,
\end{eqnarray}
where $d=-\lim\limits_{z\in \mathbb{C}^+\rightarrow \gamma_+}m_{\Sigma_1}(z)$.
 \end{assu}
Intuitively, (\ref{1010.1h}) restricts the upper bound of $\mu_{K+1}$ to ensure $\lambda_{K+1}$ to be a nonspiked eigenvalue.
 Denote the $i$-th largest eigenvalue of $\frac{1}{n}\bbX^{\intercal}\bold\Sigma_1\bbX$ by $\nu_i$. Note that  the limiting law of $\nu_{1}$ is the Type-1 Tracy-Widom distribution.


\begin{thm}\label{0606-2}
Suppose Assumptions  \ref{0829-1c},  \ref{0829-1b}, and \ref{1128-2} hold. In addition, either Assumption \ref{0829-1} or \ref{0829-1h} holds. 
Recalling $l$ above \eqref{a2}, $l\ll n^{1/6}$ and $p\sim n$. 
 For any $i$ satisfying $1\le i-K\leq \log n$,
we have, with high probability,
$$|\lambda_i-\nu_{i-K}|\le n^{-2/3-\ep},$$
In particular,  $\lambda_{K+1}$ has limiting Type-1 Tracy-Widom distribution.
\end{thm}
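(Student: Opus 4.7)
The plan is to reduce the problem to comparing the eigenvalues of $\bbS_n$ with those of the ``nonspiked'' reference matrix $\bbA:=n^{-1}\bbX^{\intercal}\bold\Sigma_1\bbX$, whose top eigenvalues are the $\nu_i$, and then to transfer the known edge Tracy--Widom limit for $\nu_1$ to $\lambda_{K+1}$. The nonzero eigenvalues of $\bbS_n$ agree with those of $n^{-1}\bbX^{\intercal}\bbU^{\intercal}\bold\Lambda\bbU\bbX$; splitting $\bold\Lambda=\diag(\bold\Lambda_S,\bold\Lambda_P)$ and $\bbU^{\intercal}=(\bbU_1^{\intercal},\bbU_2^{\intercal})$, this decomposes as
\begin{equation*}
\bbA+\bbB,\qquad \bbB:=\frac{1}{n}\bbX^{\intercal}\bbU_1^{\intercal}\bold\Lambda_S\bbU_1\bbX,
\end{equation*}
where $\bbB\succeq 0$ has rank at most $K$.

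The upper bound $\lambda_i\le \nu_{i-K}$ for $i\ge K+1$ is a direct consequence of Courant--Fischer: on $W:=\ker(\bbU_1\bbX)$, which has codimension at most $K$, one has $\bbB x=0$, so $(\bbA+\bbB)|_W=\bbA|_W$; Cauchy interlacing for compressions then yields $\lambda_i(\bbA+\bbB)\le \lambda_{i-K}(\bbA|_W)\le \nu_{i-K}$.

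The matching lower bound $\lambda_i\ge \nu_{i-K}-n^{-2/3-\ep}$ is the crux of the proof. Since $\det(\lambda\bbI-\bbA-\bbB)=\det(\lambda\bbI-\bbA)\det\bbM(\lambda)$ with
\begin{equation*}
\bbM(\lambda):=\bbI_K-\frac{1}{n}\bold\Lambda_S^{1/2}\bbU_1\bbX(\lambda\bbI-\bbA)^{-1}\bbX^{\intercal}\bbU_1^{\intercal}\bold\Lambda_S^{1/2},
\end{equation*}
every eigenvalue of $\bbA+\bbB$ appears as a root of $\det\bbM=0$ once the poles at the $\nu_j$ are properly accounted for. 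The proof of Theorem~\ref{0408-2}, via the quadratic-form CLT of Theorem~\ref{1101-1}, locates exactly $K$ such roots near $\theta_1,\dots,\theta_K$. Under Assumption~\ref{1128-2} these spiked roots are uniformly bounded away from the right edge $\gamma_+$ of the spectrum of $\bbA$, so they absorb the top $K$ eigenvalues of $\bbA+\bbB$ and the remaining $n-K$ roots track the $\nu_j$. Sharpening this tracking to the $n^{-2/3-\ep}$ scale rests on two inputs: (i) edge rigidity of the form $|\nu_j-\gamma_j|\le n^{-2/3-\ep}j^{-1/3}$ for the sample covariance matrix $\bbA$, which holds under the all-moment Assumption~\ref{0829-1b} together with the boundedness of $\|\bold\Sigma_1\|$ granted by Assumption~\ref{0829-1}, via standard edge local laws for sample covariance matrices; and (ii) an isotropic local law for $(\lambda\bbI-\bbA)^{-1}$ in the spectral window $|\lambda-\gamma_+|\le n^{-2/3+\ep}$, which controls $\bbM(\lambda)$ in that window and, combined with the subcritical condition $\limsup\mu_{K+1}d<1$ of Assumption~\ref{1128-2}, rules out any additional root of $\det\bbM=0$ landing in the edge regime. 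A root-counting argument then matches each of the top $\log n$ eigenvalues $\lambda_{K+j}$ to the corresponding $\nu_j$ within $n^{-2/3-\ep}$.

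Once $|\lambda_{K+1}-\nu_1|\le n^{-2/3-\ep}$ is in hand, the Type-1 Tracy--Widom limit for $\lambda_{K+1}$ is immediate: the coupling is on a strictly finer scale than the $n^{-2/3}$ fluctuation window of $\nu_1$, so the known edge universality for the largest eigenvalue of a sample covariance matrix with bounded population spectrum transfers verbatim to $\lambda_{K+1}$. The principal obstacle is the sharp lower bound: naive interlacing only gives $\lambda_i\ge \nu_i$, and closing the gap all the way up to $\nu_{i-K}$ at the $n^{-2/3-\ep}$ scale forces one to combine the edge isotropic local law, rigidity for $\bbA$, and a careful count of the roots of the secular equation $\det\bbM(\lambda)=0$ near the edge.
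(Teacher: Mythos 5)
Your skeleton is the paper's: write the nonzero spectrum of $\bbS_n$ as that of $\bbA+\bbB$ with $\bbA=n^{-1}\bbX^{\intercal}\bold\Sigma_1\bbX$ and a rank-$K$ nonnegative perturbation, get the one-sided bounds $\nu_{i-K}\ge\lambda_i\ge\nu_i$ by interlacing (the paper's Lemma \ref{0606-1}), pass to the secular equation $\det\bbM(\lambda)=0$ (Lemma \ref{0609-1}), control $\bbM$ near the edge by an isotropic/anisotropic local law, and finally transfer the Tracy--Widom law from $\nu_1$ across a coupling finer than $n^{-2/3}$. The genuine gap is in the step that is supposed to produce the $n^{-2/3-\ep}$ sticking, which is the heart of the theorem.

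First, the rigidity you invoke, $|\nu_j-\gamma_j|\le n^{-2/3-\ep}j^{-1/3}$, is false: $\nu_j$ fluctuates on scale $n^{-2/3}$ about its classical location, so no comparison of $\lambda_{K+j}$ and $\nu_j$ with deterministic locations can ever give a coupling below the $n^{-2/3}$ scale. The theorem couples $\lambda_{K+j}$ to the random $\nu_j$ themselves; the paper does this by a forbidden-region argument: for real $t$ with $|t-\gamma_+|\le n^{-2/3+2\ep}$ and $\mathrm{dist}\bigl(t,\sigma(\bbA)\bigr)\ge n^{-2/3-2\ep}$ one evaluates the local law at $z=t+in^{-2/3-\ep}$ and bounds the real-axis quadratic forms in $\bbM(t)$ by the imaginary part of the resolvent, as in \eqref{0612.2}--\eqref{1128.13}, concluding that $\bbM(t)$ is nonsingular since $|m_{\Sigma_1}(z)+\mu_i^{-1}|\sim 1$; rigidity is used only at the correct scale $n^{-2/3+\ep}$ to locate the window. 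Relatedly, your claim that the local law ``rules out any additional root of $\det\bbM=0$ landing in the edge regime'' cannot be right as stated: the non-spiked eigenvalues $\lambda_{K+j}$ are exactly such roots, and what must be excluded are roots at distance $\gtrsim n^{-2/3-\ep}$ from $\sigma(\bbA)$ --- the estimate you have not supplied. (Assumption \ref{1128-2} serves to guarantee a regular square-root edge with no outliers from the non-spiked part, so that $\nu_1$ is Tracy--Widom, rather than to exclude secular roots.) Second, locating the $K$ spiked roots via Theorem \ref{0408-2}/Theorem \ref{1101-1} only yields events of probability tending to one, whereas the theorem asserts a high-probability statement; the paper re-derives the spike locations with high probability directly from the local laws via forbidden intervals and a Rouch\'e count (Propositions \ref{0613-1}--\ref{0613-2}), and the final index matching also uses the delocalization bound of Lemma \ref{0615-1} and, when $l\neq 0$, an augmented eigenvector matrix --- ingredients missing from your outline. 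The closing Tracy--Widom transfer, given the coupling, is fine.
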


\begin{rmk}
Theorem \ref{0606-2} shows that the non-spiked sample eigenvalues $\lambda_{K+1}$, $\lambda_{K+2}$,..., $\lambda_{K+r}$ share the same asymptotic distribution as $\nu_{1}$, $\nu_{2}$,..., $\nu_{r}$ since the fluctuation of $\nu_{1}$, $\nu_{2}$,..., $\nu_{r}$ are $n^{-2/3}\gg n^{-2/3-\ep}$
. Here r is a fixed integer. See \cite{BPZ2014a} and \cite{KY11} for more details.
\end{rmk}


\section{Estimating The Number of Spiked Eigenvalues}\label{sec5h}
\label{Number.sec}

Identifying the number of spikes is an important problem for a range of statistical applications. For example, a critical step in PCA is the determination of the number of the significant principal components. This issue arises in virtually all practical applications where PCA is used. Choosing the number of principal components is often subjective and based on heuristic methods. As an application of the main theorems discussed in the last section, we propose in this section a procedure to identify the number of the spiked eigenvalues.

Suppose that the conditions of Theorem \ref{0606-2} hold. Define the asymptotic variance of $\nu_1$ by (see also (3) of \cite{K2007} )
\begin{eqnarray}\label{0604.3}
\sigma_n^3=\frac{1}{d^3}(1+\frac{p-K}{n}\int (\frac{\lambda d}{1-\lambda d})^3dF_{\bold\Lambda_P}(\lambda)).
\end{eqnarray}
By Theorem \ref{0606-2}, $\lambda_{K+1}$ has the same asymptotic distribution as $\nu_1$. Together with Theorem 1 of \cite{K2007}, we have
 \begin{eqnarray}\label{0216.5}
 n^{2/3}\frac{\lambda_{K+1}-\gamma_+}{\sigma_n}\stackrel{D}{\longrightarrow} TW1,
\end{eqnarray}
where $TW1$ is the Type-1 Tracy-Widom distribution. Onatski \cite{O09} also established such a result for the complex case, but Theorem 1 of \cite{O09} requires that the spiked eigenvalues are much bigger than $n^{2/3}$ and $p/n=o(1)$. Moreover, the statistics used in \cite{O09} does not estimate $\gamma_+$ and $\sigma_n$, while our approach estimates them.


Recall that $\gamma_+$ is the asymptotic mean of $\lambda_{K+1}$. From (\ref{0216.5}) one can see that the confidence interval of $\gamma_+$ is $[\lambda_{K+1}-w^*\sigma_n n^{-2/3},\lambda_{K+1}+w^*\sigma_n n^{-2/3}]$, where $w^*$ is a suitable critical value from the Type-1 Tracy-Widom distribution. This, together with Theorem \ref{0408-2}, implies that it suffices to count the number of the eigenvalues of $\bbS_n$ that lie beyond $(\gamma_++w^*\sigma_n n^{-2/3}\log n)$ to estimate the number of spikes $K$ where $\log n$ can be replaced by any number tending to infinity. However, in practice $\gamma_+$ and $\sigma_n$ are unknown and need to be estimated. 

We first consider estimation of $\sigma_n$. It turns out that
\begin{eqnarray}\label{0604.6}
\sigma_n= \left(-\lim_{z\rightarrow \gamma_+^+}\frac{\int \frac{dF_{0}(x)}{(x-z)^3}}{(\int \frac{dF_{0}(x)}{(x-z)^2})^3}\right)^{1/3},
\end{eqnarray}
where $F_0(x)$ is the limit of the spectral distribution function of $\frac{1}{n}\bbX^*\bold\Sigma_1\bbX$
(see Section 7 in the supplementary material).  Moreover, one can verify that with high probability
\begin{eqnarray}\label{h0604.7}
\lambda_{K+1}\le \lambda_{n^{1/6}}+ \log n \times n^{-5/9}
\end{eqnarray}
(see Section 7 in the supplementary material). In view of (\ref{h0604.7}) we estimate $F_{0}(x)$ by its empirical version
$\lambda_{n^{1/6}},\lambda_{n^{1/6}+1},..., \lambda_{n}$ in (\ref{0604.6}), i.e. we exclude the first $n^{1/6}$ eigenvalues of $\bbS_n$. Moreover, for $\gamma_+$ in (\ref{0604.6}), we use $\lambda_{n^{1/6}}+n^{-4/9}$ to replace it. The reason for using  $\lambda_{n^{1/6}}+n^{-4/9}$ to estimate $\gamma_+$ instead of $\lambda_{n^{1/6}}$  is to avoid singularity in  $\int \frac{dF_0(x)}{(x-\gamma_+)^3}$. The estimator of $\sigma_n$ is then given by
$$\hat \sigma_n = \left(-\frac{\frac{1}{n-n^{1/6}}\sum_{i=n^{1/6}}^n \frac{1}{(\lambda_i-z_0)^3}}{(\frac{1}{n-n^{1/6}}\sum_{i=n^{1/6}}^n\frac{1}{(\lambda_i-z_0)^2})^3}\right)^{1/3}, \quad\mbox{\rm where}\quad  z_0=\lambda_{n^{1/6}}+n^{-4/9}.$$

We next consider estimation of $\gamma_+$, the asymptotic mean of $\lambda_{K+1}$. By the assumption that $K\ll n^{1/6}$, it follows from Theorems \ref{0408-2} and  \ref{0606-2} that $\lambda_{n^{1/6}}$ is not a spiked eigenvalue. Based on this, an upper bound of $\lambda_{K+1}$ is given in (\ref{h0604.7}). 
Hence we use the following $\hat p_0$ as an initial upper bound of $\lambda_{K+1}$
\begin{eqnarray}\label{0604.7}
\hat p_0= \lambda_{n^{1/6}}+\log n \times n^{-5/9}.
\end{eqnarray}

Although  $\hat p_0$ is a good upper bound for $\lambda_{K+1}$ theoretically, it does not depend on $\sigma_n$ and hence in practice $\hat p_0$ may not work well. Based on (\ref{0216.5}), we propose the following iteration approach to update $\hat p_0$. The idea behind the iteration is that even if $\hat p_0$ is not larger than $\lambda_{K+1}$ in practice, $\hat p_0$ is still close to $\lambda_{K+1}$. Thus by (\ref{0216.5}), there is at least one eigenvalue in the interval $[\hat p_{0},\hat p_{0}+w^*m_n\sigma_n n^{-2/3}]$, where $m_n\rightarrow \infty$.


\begin{enumerate}
\item Define the initial value $\hat p_0$ in (\ref{0604.7}).

\item Suppose that we have $\hat p_{m-1}$. If there is at least one eigenvalue of $\bbS_n$ belonging to $[\hat p_{m-1},\hat p_{m-1}+2.02(\log n)\sigma_n n^{-2/3}]$, where $2.02$ is the $99\%$ quantile of Type-1 Tracy-Widom distribution, we renew $\hat p_{n}=\hat p_{n-1}+2.02\log n\sigma_n n^{-2/3}.$ 
    Here $\log n$ can be also replaced by the other number tending to infinity too. Otherwise the iteration stops.

\item After getting $\hat p_{n}$, we return to Step 2 until the iteration stops.

\item Denote the final value of the above iteration by $\hat p_{end}$. We define $\hat K$ to be the number of eigenvalues larger than $\hat p_{end}$. 
\end{enumerate}


Theorem \ref{0606-2} implies that $\hat K$ is a good estimator of  the number of significant components $K$.
\begin{prop}\label{0103-1}
Under the conditions of Theorem \ref{0606-2}, we have $\hat K=K$ with high probability.
\end{prop}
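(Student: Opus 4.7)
The plan is to show that, with high probability, the iteration terminates immediately at $\hat p_{end} = \hat p_0$, and that this $\hat p_0$ already separates the $K$ divergent spiked sample eigenvalues from the bounded non-spiked ones. The argument decouples into two essentially independent ingredients: the upper bound (\ref{h0604.7}) on the top non-spiked eigenvalue $\lambda_{K+1}$, and the divergence of the spiked sample eigenvalues from Theorem \ref{0318-1}.

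I first invoke (\ref{h0604.7}), proved in Section~7 of the supplementary material \cite{CHP17}, which asserts $\lambda_{K+1} \le \lambda_{n^{1/6}} + \log n \cdot n^{-5/9}$ with high probability. By the very definition of $\hat p_0$ in (\ref{0604.7}), this immediately gives $\hat p_0 \ge \lambda_{K+1}$ with high probability, so no non-spiked eigenvalue of $\bbS_n$ exceeds $\hat p_0$. The proof of (\ref{h0604.7}) itself rests on rigidity of the eigenvalues of $n^{-1}\bbX^{\intercal}\bold\Sigma_1\bbX$ at their classical locations --- the density near $\gamma_+$ is of square-root type, so the $n^{1/6}$-th classical location sits at distance $O(n^{-5/9})$ from $\gamma_+$ --- together with Cauchy interlacing and the hypothesis $K \ll n^{1/6}$, which allows one to transfer the rigidity from the non-spiked part $\bold\Sigma_1$ to $\bbS_n$.

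Next, by Theorem \ref{0318-1} combined with Assumption \ref{0829-1}, the spiked sample eigenvalues satisfy $\lambda_i / \mu_i \to 1$ in probability, hence $\lambda_K \to \infty$. In contrast, $\lambda_{n^{1/6}}$ is bounded (it lies within $O((\log n) n^{-5/9})$ of $\gamma_+ = O(1)$ by the same rigidity), so $\hat p_0 = O(1)$ with high probability. Moreover, $\hat\sigma_n$ stays bounded: substituting the rigidity estimates into the empirical Stieltjes-type sums defining $\hat\sigma_n$ shows $\hat\sigma_n \to \sigma_n < \infty$, so the iteration step $2.02 \log n \cdot \sigma_n n^{-2/3} = o(1)$. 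Consequently, with high probability, the interval $[\hat p_0, \hat p_0 + 2.02 \log n \cdot \sigma_n n^{-2/3}]$ contains no eigenvalue of $\bbS_n$: all non-spiked eigenvalues lie at or below $\hat p_0$, while the smallest spiked eigenvalue $\lambda_K$ diverges and is therefore far above $\hat p_0 + o(1)$.

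The iteration in Step~2 therefore halts at the very first test, yielding $\hat p_{end} = \hat p_0$. Counting gives $\hat K = \#\{i : \lambda_i > \hat p_0\} = K$, since $\lambda_1, \ldots, \lambda_K$ all exceed $\hat p_0$ (divergent vs.\ bounded) and $\lambda_{K+1}, \ldots, \lambda_p \le \hat p_0$ by (\ref{h0604.7}) and monotonicity. The main technical obstacle is the proof of (\ref{h0604.7}); the remainder of the argument is a direct consequence of the separation between the divergent spikes and the bounded non-spiked bulk, together with Theorem \ref{0318-1}.
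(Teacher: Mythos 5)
Your route is the intended one: the paper gives no separate detailed argument for Proposition \ref{0103-1} beyond combining the bound (\ref{h0604.7}) on $\lambda_{K+1}$ (proved in the supplement via rigidity, interlacing and $K\ll n^{1/6}$) with the divergence of the spiked sample eigenvalues, and your observation that the iteration then terminates at $\hat p_{0}$ with all non-spiked eigenvalues strictly below it and all spikes far above it is exactly how the estimator is meant to work.

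There is, however, one gap in the probability bookkeeping. The proposition asserts $\hat K=K$ \emph{with high probability}, which in this paper means with probability $1-O(n^{-l})$, but the only control you invoke on the spikes is Theorem \ref{0318-1}, which is an $O_p$ statement and therefore yields only $\lambda_K\to\infty$ in probability; as written, your argument proves $\hat K=K$ with probability tending to one, not at a polynomial rate. The fix is available under the hypotheses of Theorem \ref{0606-2} (which include bounded moments of all orders): the supplement's eigenvalue-location results (Propositions \ref{0613-1}, \ref{0613-2} and \ref{0613-2h}) show that, with high probability, each interval $I_i=[\mu_i-\mu_iKn^{-1/2+2\ep},\,\mu_i+\mu_iKn^{-1/2+2\ep}]$ contains exactly one (or, in the multiplicity case of Assumption \ref{0829-1h}, exactly $n_i$) eigenvalue(s) of $\bbS_n$, so $\lambda_K\ge \mu_K(1-o(1))\to\infty$ with high probability; citing this instead of Theorem \ref{0318-1} also covers the case where Assumption \ref{0829-1h} rather than Assumption \ref{0829-1} is in force, for which Theorem \ref{0318-1} is not stated. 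With that substitution the rest of your argument (termination of the iteration at the first step and the count above $\hat p_{end}$) goes through as you describe.
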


\subsection*{Identifying The Number of Factors}

A closely related problem is the estimation of the number of factors under a factor model, which is widely used in financial econometrics.  Consider the factor model
\begin{eqnarray}\label{3(1)}
\bby_{t}=\Lambda\bbf_t+\bbT\varepsilon_{t}, \ \  t=1, 2, \ldots, n,
\end{eqnarray}
where $\Lambda$ is  $p\times K$-dimensional factor loading, $\bbf_t$ is the corresponding $K$-dimensional  factor, $\{\varepsilon_{it}: i=1,2,\ldots,p; t=1,2,\ldots,n\}$ are the independent idiosyncratic components.

In many applications, the number of factors $K$ is unknown. An important step in factor analysis is to determine the value of $K$. Let  $\bbF=(\bbf_1,...,\bbf_n)$, $\bbZ=(\varepsilon_1,...,\varepsilon_n)$ and $\bbY=(\bby_1,...,\bby_n)$. Then (\ref{3(1)}) can be rewritten as
\begin{equation}\label{0102.1}
\bbY=\bold\Lambda\bbF+\bbT\bbZ= (\begin{matrix}\bold\Lambda &\bbT \end{matrix}) \left(\begin{matrix}\bbF\\ \bbZ \end{matrix}\right).
\end{equation}

Suppose that $\left(\begin{matrix}\bbF\\ \bbZ \end{matrix}\right)$ satisfies Assumptions \ref{0829-1a} and \ref{0829-1b} and $(\begin{matrix}\bold\Lambda &\bbT \end{matrix})$ satisfies Assumptions \ref{0829-1} and \ref{1128-2}. It is easy to conclude that the $(K+1)$-st largest eigenvalue of ${1\over n} \bbY\bbY^{\intercal}$ follows the Type-1 Tracy-Widom distribution asymptotically.
The following result is a direct consequence of Proposition \ref{0103-1}.
\begin{coro}\label{0103-1x}
For the model (\ref{3(1)}), if $\left(\begin{matrix}\bbF\\ \bbZ \end{matrix}\right)$ satisfies Assumptions \ref{0829-1a} and \ref{0829-1b} and $(\begin{matrix}\bold\Lambda &\bbT \end{matrix})$ satisfies Assumptions \ref{0829-1} and \ref{1128-2}, $K\ll n^{1/6}$ and $p\sim n$, then we have $\hat K=K$ with high probability.
\end{coro}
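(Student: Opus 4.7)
The strategy is to embed the factor model (\ref{3(1)}) into the general sample covariance matrix framework of Section \ref{CLT-Eigenvalues.sec} and then invoke Proposition \ref{0103-1} directly. Writing $\bold\Gamma = (\begin{matrix}\bold\Lambda & \bbT \end{matrix})$ (a $p \times (p+K)$ deterministic matrix) and $\bbX = \left(\begin{matrix}\bbF \\ \bbZ \end{matrix}\right)$ (a $(p+K) \times n$ random matrix), equation (\ref{0102.1}) reads $\bbY = \bold\Gamma \bbX$, which matches the form (\ref{a2}) with the index $l$ playing the role of $K$. The dimension requirement $l/p = K/p \to 0$ is automatic from the hypotheses $K \ll n^{1/6}$ and $p \sim n$.

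The next step is to translate, one by one, the hypotheses of Proposition \ref{0103-1} (equivalently, the conditions of Theorem \ref{0606-2}) through this identification. The moment and independence conditions on the columns of $\bbX$ (Assumptions \ref{0829-1a} and \ref{0829-1b}) are inherited verbatim from the corresponding assumptions on $\left(\begin{matrix}\bbF \\ \bbZ \end{matrix}\right)$. The spike structure of Assumption \ref{0829-1} together with the edge constraint of Assumption \ref{1128-2}, posited for $(\begin{matrix}\bold\Lambda & \bbT \end{matrix})$, then pass directly to the population covariance $\bold\Sigma = \bold\Gamma \bold\Gamma^{\intercal} = \bold\Lambda \bold\Lambda^{\intercal} + \bbT \bbT^{\intercal}$ of $\bbS_n = \tfrac{1}{n} \bbY \bbY^{\intercal}$, so that $\bold\Sigma$ has exactly $K$ divergent spiked eigenvalues with $p/(n\mu_i) \to 0$ and bounded nonspiked eigenvalues whose leading edge stays away from $1/\mu_{K+1}$. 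The remark preceding the corollary already observes that, under these conditions, the $(K+1)$-st sample eigenvalue follows the Type-1 Tracy--Widom law, which is the key input for the construction of $\hat K$.

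With all of this in place, Proposition \ref{0103-1} applied to $\bbS_n$ yields $\hat K = K$ with high probability, which is the claim. The only mild bookkeeping issue, and the place where one must be a little careful, is that the corollary does not explicitly state Assumption \ref{0829-1c} on well-separation of spikes, which Proposition \ref{0103-1} technically requires; one either reads this into the factor-model set-up as an implicit regularity condition on $\bold\Lambda$, or observes that the counting step defining $\hat K$ only needs each of the top $K$ sample eigenvalues to exceed the threshold $\hat p_{end}$, a conclusion that follows from Theorem \ref{0318-1} together with $\mu_i \to \infty$ without requiring Assumption \ref{0829-1c} in full strength. Apart from this minor point, the argument is a direct specialization of Proposition \ref{0103-1} and introduces no new analytic difficulty, which justifies labelling the result as a corollary.
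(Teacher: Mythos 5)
Your proposal matches the paper's own argument: the paper likewise rewrites the factor model as $\bbY=\bold\Gamma\bbX$ with $\bold\Gamma=(\begin{matrix}\bold\Lambda &\bbT\end{matrix})$ and $\bbX=\left(\begin{matrix}\bbF\\ \bbZ\end{matrix}\right)$, notes that $\lambda_{K+1}$ of $\frac{1}{n}\bbY\bbY^{\intercal}$ is asymptotically Tracy--Widom under the translated assumptions, and obtains the corollary as a direct consequence of Proposition \ref{0103-1}. Your side remark about Assumption \ref{0829-1c} being left implicit is a fair observation about the statement, but it does not alter the route, which is the same specialization the paper uses.
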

Comparing to the approaches in \cite{BS002} and \cite{O09}, here we allow the number of factors $K$ to diverge with $n$. Moreover, we only assume that the spiked population eigenvalues diverge to infinity, while \cite{BS002} and \cite{O09} assume that they are much larger than $n^{2/3}$ or grow linearly with $n$.


\section{Estimating the Eigenvectors}\label{sec3h}
\label{Eigenvectors.sec}

As mentioned in the introduction, the leading eigenvectors of the population covariance matrix are of significant interest in PCA and many other statistical applications. They are conventionally estimated by their empirical counterparts.

We consider in this section estimation of the population eigenvectors associated with the spiked population eigenvalues $\mu_1$,...,$\mu_K$, involved in $\sigma_i^2$ in (\ref{0831.3}). To this end, we first characterize the relationship between the sample eigenvectors and the corresponding population eigenvectors. Write the population eigenvectors matrix $\bbV$ as $\bbV=(\bbv_1,\cdots,\bbv_{p})$.

\begin{thm}\label{0503-1}
Suppose that the  conditions of Theorem \ref{0408-2} hold. Let $\xi_i$ be the eigenvector of  $\bbS_n$ corresponding to the eigenvalue $\lambda_i$. 
  Then for $1\le i \le K$, we have
\begin{eqnarray}\label{1130.1}
\bbv_i^{\intercal}\xi_i\xi_i^{\intercal}\bbv_i\stackrel{i.p.}{\longrightarrow} 1.
\end{eqnarray}
\end{thm}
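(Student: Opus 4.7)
The plan is to pass to the population eigenbasis and reduce the statement to a block eigenvalue analysis. Since $\bbS_n = \bbV\bbM\bbV^{\intercal}$ with $\bbM := \Lambda^{1/2}\bbA\Lambda^{1/2}$ and $\bbA := n^{-1}\bbU\bbX\bbX^{\intercal}\bbU^{\intercal}$, the eigenvector $\xi_i$ corresponds to $\zeta_i := \bbV^{\intercal}\xi_i$, an eigenvector of $\bbM$ with eigenvalue $\lambda_i$. The quantity of interest becomes $\bbv_i^{\intercal}\xi_i\xi_i^{\intercal}\bbv_i = (\bbe_i^{\intercal}\zeta_i)^2$. Partition $\zeta_i = (\zeta_i^{(S)\intercal},\zeta_i^{(P)\intercal})^{\intercal}$ with $\zeta_i^{(S)}\in\RR^K$ (spiked part) and $\zeta_i^{(P)}\in\RR^{p-K}$ (nonspiked part). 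It then suffices to prove (i) $\|\zeta_i^{(P)}\|^2 \stackrel{i.p.}{\to} 0$ and (ii) $(\bbe_i^{\intercal}\zeta_i^{(S)})^2/\|\zeta_i^{(S)}\|^2 \stackrel{i.p.}{\to} 1$.

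For (i), the second block row of $\bbM\zeta_i = \lambda_i\zeta_i$ is solvable for $\zeta_i^{(P)}$ via the Schur complement:
\[
\zeta_i^{(P)} = -\bigl(\Lambda_P^{1/2}\bbA_{22}\Lambda_P^{1/2} - \lambda_i\bbI\bigr)^{-1}\Lambda_P^{1/2}\bbA_{21}\Lambda_S^{1/2}\zeta_i^{(S)}.
\]
By Theorem \ref{0318-1}, $\lambda_i \sim \mu_i \to \infty$, while $\Lambda_P^{1/2}\bbA_{22}\Lambda_P^{1/2}$ has bounded spectral norm, so the above inverse has operator norm $O_p(\mu_i^{-1})$. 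A direct moment computation shows each entry of $\bbA_{21}$ has variance $O(1/n)$ since the rows of $\bbU$ are orthonormal, so $\|\bbA_{21}\|$ is well-controlled. Combined with $\|\Lambda_S^{1/2}\zeta_i^{(S)}\|^2 \le \mu_1\|\zeta_i^{(S)}\|^2$, one obtains $\|\zeta_i^{(P)}\|^2 = O_p(d_i + K/n)$, which vanishes under Assumption \ref{0829-1}.

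For (ii), substituting the formula for $\zeta_i^{(P)}$ into the first block yields the reduced $K$-dimensional eigenproblem
\[
\Lambda_S^{1/2}\bigl[\bbA_{11} + \bbA_{12}\Lambda_P^{1/2}(\lambda_i\bbI - \Lambda_P^{1/2}\bbA_{22}\Lambda_P^{1/2})^{-1}\Lambda_P^{1/2}\bbA_{21}\bigr]\Lambda_S^{1/2}\zeta_i^{(S)} = \lambda_i\zeta_i^{(S)}.
\]
A matrix-Bernstein / moment calculation (using $K \ll n^{1/6}$ from Assumption \ref{0829-1}) gives $\|\bbA_{11} - \bbI_K\| = O_p(\sqrt{K/n}) = o_p(1)$, so the diagonal entries of the leading operator $\Lambda_S^{1/2}\bbA_{11}\Lambda_S^{1/2}$ are $\mu_j(1+o_p(1))$ while the off-diagonals satisfy $|(\Lambda_S^{1/2}\bbA_{11}\Lambda_S^{1/2})_{jk}| = O_p(\sqrt{\mu_j\mu_k/n})$. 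Assumption \ref{0829-1c} then yields
\[
\sum_{j\ne i}\frac{\mu_i\mu_j\,|(\bbA_{11})_{ij}|^2}{(\mu_i-\mu_j)^2} \;=\; O_p\!\Bigl(\frac{K}{n}\Bigr) \;\to\; 0,
\]
since the ratio $\mu_j/\mu_i$ is bounded away from $1$ for $j\ne i$. A first-order perturbation (Davis--Kahan) bound then shows that the $i$-th eigenvector of the reduced operator is $\bbe_i + o_p(1)$, delivering (ii).

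The principal obstacle is that the Schur-complement correction $\bbA_{12}\Lambda_P^{1/2}(\cdots)^{-1}\Lambda_P^{1/2}\bbA_{21}$ is itself random and depends on the unknown $\lambda_i$, so its entries cannot be controlled by a simple concentration bound; instead one must combine $\lambda_i \asymp \mu_i$ with careful bounds on $\|\bbA_{21}\|$ to show this correction is of smaller order than the $\bbA_{11}$ fluctuations in the off-diagonals. A second subtlety is uniformity in $i$ as $K$ diverges, which is handled by the geometric gap $\mu_{i-1}/\mu_i \ge c > 1$ from Assumption \ref{0829-1c} that makes each off-diagonal perturbation term contribute $O(1/n)$ uniformly. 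Combining (i) and (ii) gives $(\bbe_i^{\intercal}\zeta_i)^2 \stackrel{i.p.}{\to} 1$, which is \eqref{1130.1}.
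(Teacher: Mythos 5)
Your route is genuinely different from the paper's (which represents $\bbv_i^{\intercal}\xi_i\xi_i^{\intercal}\bbv_i$ as a contour integral $-\frac{1}{2\pi i}\oint_{\Pi}\bbv_i^{\intercal}(\bbS_n-z\bbI)^{-1}\bbv_i\,dz$ and controls the relevant resolvent entry by a Schur complement uniformly on a contour around $\lambda_i$), but your step (i) has a genuine gap. From $\zeta_i^{(P)}=-(\Lambda_P^{1/2}\bbA_{22}\Lambda_P^{1/2}-\lambda_i\bbI)^{-1}\Lambda_P^{1/2}\bbA_{21}\Lambda_S^{1/2}\zeta_i^{(S)}$, the estimates you actually invoke (resolvent norm $O_p(\mu_i^{-1})$, a norm bound on $\bbA_{21}$ of order $\sqrt{p/n}$, and $\|\Lambda_S^{1/2}\zeta_i^{(S)}\|^2\le\mu_1$) only yield $\|\zeta_i^{(P)}\|^2=O_p\bigl(\tfrac{\mu_1}{\mu_i}\,d_i\bigr)$, not the claimed $O_p(d_i+K/n)$. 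Assumption \ref{0829-1c} bounds the spike ratios only from below, so $\mu_1/\mu_i$ can grow geometrically in $i$; e.g.\ with $p=n$, $\mu_j\asymp n^{0.9}2^{-j}$ and $K\asymp\log n$ all assumptions of Theorem \ref{0408-2} hold, yet $(\mu_1/\mu_K)d_K\asymp n^{0.1}\to\infty$, so your bound does not vanish for the smaller spikes. Replacing $\mu_1$ by $\mu_i$ requires knowing that $\zeta_i^{(S)}$ carries only $O_p(\mu_i)$ of $\Lambda_S$-weighted mass, which is essentially the localization you set out to prove in step (ii); as organized, (i) is circular/incomplete. (A secondary slip: $\Lambda_P^{1/2}\bbA_{22}\Lambda_P^{1/2}$ is not of bounded spectral norm when $p/n\to\infty$ — it is $O_p(p/n)$ — although the $O_p(\mu_i^{-1})$ resolvent bound survives because $d_i\to0$.)

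This is exactly the difficulty the paper's proof is engineered to avoid: there the weights of the other spikes are kept inside the inverse, via $\Lambda_{21}^{1/2}(\bbA-\bbB\bbC^{-1}\bbB^{\intercal})^{-1}\Lambda_{21}^{1/2}=(\frac1n\bbU_{21}\bbX\bbX^{\intercal}\bbU_{21}^{\intercal}-z\Lambda_{21}^{-1}-\Lambda_{21}^{-1/2}\bbB\bbC^{-1}\bbB^{\intercal}\Lambda_{21}^{-1/2})^{-1}$, whose norm is $O_p(1)$ by the gap assumption no matter how disparate the spikes are, leaving the $\mu$-free quadratic form $\frac{1}{n^2}\bbu_i^{\intercal}\bbX\bbX^{\intercal}\bbU_{21}^{\intercal}\bbU_{21}\bbX\bbX^{\intercal}\bbu_i=O_p(K^2/n)$. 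To rescue your scheme you would need to reverse the order: first carry out the perturbation analysis of the exact reduced $K\times K$ problem to obtain $\zeta_i^{(S)}/\|\zeta_i^{(S)}\|=\bbe_i+o_p(1)$ together with the weighted control $\|\Lambda_S^{1/2}\zeta_i^{(S)}\|^2=O_p(\mu_i)$, and only then feed this into the formula for $\zeta_i^{(P)}$ — or else sandwich the spike weights into the resolvent as the paper does. Note also that your reduced operator depends on the unknown $\lambda_i$ (a nonlinear eigenproblem), so the Davis--Kahan step must be justified using $\lambda_i\sim\mu_i$ from Theorem \ref{0318-1} together with explicit entrywise bounds on the correction $\Lambda_S^{1/2}\bbA_{12}\Lambda_P^{1/2}(\lambda_i\bbI-\Lambda_P^{1/2}\bbA_{22}\Lambda_P^{1/2})^{-1}\Lambda_P^{1/2}\bbA_{21}\Lambda_S^{1/2}$; you flag this as the principal obstacle but do not actually carry it out.
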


Theorem \ref{0503-1} also implies that for $i=1,...,K$, $j=1,...,p$, $i\neq j$, we have
$$\bbv_j^{\intercal}\xi_i\xi_i^{\intercal}\bbv_j\stackrel{i.p.}{\longrightarrow} 0.$$
One should notice that the convergence is uniformly for $j=1,...,p$ since $1=\xi_i^{\intercal}\xi_i=\sum_{j=1}^p\bbv_j^{\intercal}\xi_i\xi_i^{\intercal}\bbv_j$.

Theorem \ref{0503-1} shows that the sample eigenvector $\xi_i$ is a good estimator of $\bbv_i$ up to a sign difference.
An immediate application of Theorem \ref{0503-1} is to estimate $\sigma_i^2$ for the case when $\bbV=\bbU^{\intercal}$ and $\gamma_{41}=...=\gamma_{4p}=\gamma_{4}$ by Corollary \ref{1130-1}. This corollary shows that the empirical eigenvector plays an important role in statistical inference of the spiked eigenvalue.

\begin{coro}\label{1130-1}
 Under the conditions of Theorem \ref{0503-1}, we have
$$\sum_{j=1}^pv_{ij}^4-\sum_{j=1}^p\xi_{ij}^4\stackrel{i.p.}{\longrightarrow} 0.$$
\end{coro}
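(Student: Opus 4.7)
\textbf{Proof proposal for Corollary \ref{1130-1}.} The plan is to reduce the statement to an $\ell_2$-proximity bound between the sample and population eigenvectors that is already available from Theorem \ref{0503-1}, and then to pass from $\ell_2$ proximity to $\ell_4$-norm proximity by an elementary factoring identity.

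First, I would unpack Theorem \ref{0503-1}. The relation $\bbv_i^{\intercal}\xi_i\xi_i^{\intercal}\bbv_i\to 1$ in probability says $(\bbv_i^{\intercal}\xi_i)^2\to 1$, hence $|\bbv_i^{\intercal}\xi_i|\to 1$ in probability. Because $\xi_i$ is defined only up to sign, I would fix the sign convention $\bbv_i^{\intercal}\xi_i\ge 0$, which yields $\bbv_i^{\intercal}\xi_i\to 1$, and therefore
\[
\|\xi_i-\bbv_i\|_2^2 \;=\; \|\xi_i\|_2^2+\|\bbv_i\|_2^2-2\bbv_i^{\intercal}\xi_i \;=\; 2-2\bbv_i^{\intercal}\xi_i \;\stackrel{i.p.}{\longrightarrow}\; 0.
\]
This sign choice is harmless for the corollary, since $\sum_j\xi_{ij}^4$ is invariant under $\xi_i\mapsto -\xi_i$.

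Next I would control the $\ell_4$ discrepancy by the $\ell_2$ discrepancy. Write
\[
\Big|\sum_{j=1}^p \xi_{ij}^4-\sum_{j=1}^p v_{ij}^4\Big|
\;\le\; \sum_{j=1}^p |\xi_{ij}^2-v_{ij}^2|\,(\xi_{ij}^2+v_{ij}^2).
\]
By Cauchy--Schwarz this is at most
\[
\Bigl(\sum_{j=1}^p(\xi_{ij}^2-v_{ij}^2)^2\Bigr)^{1/2}\Bigl(\sum_{j=1}^p(\xi_{ij}^2+v_{ij}^2)^2\Bigr)^{1/2}.
\]
For the second factor, $(\xi_{ij}^2+v_{ij}^2)^2\le 2(\xi_{ij}^4+v_{ij}^4)$ and $\sum_j\xi_{ij}^4\le(\sum_j\xi_{ij}^2)^2=1$, similarly for $v_{ij}$, so the factor is bounded by a constant. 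For the first factor, factor $\xi_{ij}^2-v_{ij}^2=(\xi_{ij}-v_{ij})(\xi_{ij}+v_{ij})$ and use $(\xi_{ij}+v_{ij})^2\le 4$ pointwise (both vectors are unit, so $|\xi_{ij}|,|v_{ij}|\le 1$), giving
\[
\sum_{j=1}^p(\xi_{ij}^2-v_{ij}^2)^2 \;\le\; 4\sum_{j=1}^p(\xi_{ij}-v_{ij})^2 \;=\; 4\|\xi_i-\bbv_i\|_2^2.
\]
Combining yields $|\sum_j \xi_{ij}^4-\sum_j v_{ij}^4|\lesssim \|\xi_i-\bbv_i\|_2$, which tends to $0$ in probability by the first step.

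There is no real obstacle here: the whole content is the convergence $\|\xi_i-\bbv_i\|_2\to 0$, which is an immediate reformulation of Theorem \ref{0503-1} after fixing the sign ambiguity, and everything else is a one-line $\ell_2\to\ell_4$ comparison using that both vectors lie on the unit sphere. The only subtlety worth flagging is precisely the sign convention and noting that the quantity of interest is sign-invariant, so the reduction is lossless.
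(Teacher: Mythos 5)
Your proof is correct and follows essentially the same route as the paper: fix the sign so that $\bbv_i^{\intercal}\xi_i\ge 0$, deduce $\|\xi_i-\bbv_i\|_2=o_p(1)$ from Theorem \ref{0503-1}, and then compare the fourth-power sums by an elementary inequality using that both vectors are unit. The only cosmetic difference is that the paper bounds $\sum_j|v_{ij}^4-\xi_{ij}^4|$ by $\max_j|v_{ij}-\xi_{ij}|\cdot\sum_j(|v_{ij}|+|\xi_{ij}|)^3$ instead of your Cauchy--Schwarz step.
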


We now consider the extension to the case when the multiplicity of the population eigenvalues $\mu_i$ is more than one.
Correspondingly the following corollary holds and its proof is the same as that of Theorem \ref{0503-1}. 
\begin{coro}\label{0831-2}
Recall the definition of $r_i$ above (\ref{0408.4}). Under the conditions of Theorem \ref{0831-1}, 
The angle between $\bbv_k$, $k\in \{r_{i-1}+1,..., r_i\}$ and the subspace spanned by $\{\xi_j, j=r_{i-1}+1,..., r_i\}$  tends to 0 in probability. In other words,
we have
$$\bbv_k^{\intercal}(\sum_{j=r_{i-1}+1}^{r_i}\xi_j\xi_j^{\intercal})\bbv_k\stackrel{i.p.}{\longrightarrow} 1, \ k\in \{r_{i-1}+1,..., r_i\}.$$
\end{coro}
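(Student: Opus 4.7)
The plan is to adapt the argument of Theorem \ref{0503-1} from individual eigenvectors to the spectral projector associated with a block of repeated population eigenvalues. Let $\Pi_i^{\text{samp}}=\sum_{j=r_{i-1}+1}^{r_i}\xi_j\xi_j^{\intercal}$ and $\Pi_i^{\text{pop}}=\sum_{k=r_{i-1}+1}^{r_i}\bbv_k\bbv_k^{\intercal}$. The target inequality is equivalent to $\bbv_k^{\intercal}\Pi_i^{\text{samp}}\bbv_k\to 1$, which reduces to showing $\|(\Pi_i^{\text{samp}}-\Pi_i^{\text{pop}})\bbv_k\|\to 0$ in probability, since $\Pi_i^{\text{pop}}\bbv_k=\bbv_k$.

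First I would use Theorem \ref{0831-1} to locate the cluster: all sample eigenvalues $\lambda_{r_{i-1}+1},\ldots,\lambda_{r_i}$ lie in the interval $[\theta_i(1-Cn^{-1/2}),\theta_i(1+Cn^{-1/2})]$ with high probability, while the neighboring targets $\theta_{i-1}$ and $\theta_{i+1}$ are separated from $\theta_i$ by a constant multiplicative factor because $\theta_j/\alpha_j\to 1$ by \eqref{1129.2} and $\alpha_{i-1}/\alpha_i\ge c>1$ by Assumption \ref{0829-1h}. Hence one can choose a contour $\Gamma_i\subset\mathbb{C}$ encircling exactly $\lambda_{r_{i-1}+1},\ldots,\lambda_{r_i}$ and well-separated from the rest of the spectrum of $\bbS_n$, so that
\begin{equation*}
\Pi_i^{\text{samp}}=-\frac{1}{2\pi\mathrm{i}}\oint_{\Gamma_i}(\bbS_n-z\bbI)^{-1}\,dz.
\end{equation*}

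Next, for each pair $(k,k')$ with $k,k'\in\{r_{i-1}+1,\ldots,r_i\}$ I would expand $\bbv_k^{\intercal}(\bbS_n-z\bbI)^{-1}\bbv_{k'}$ using the same resolvent identity and spiked/nonspiked split used in the proof of Theorem \ref{0503-1} (and underlying Theorem \ref{0408-2}), isolating the contribution of the spike block $\alpha_i$ from the nonspiked part $\bold\Sigma_1$. For $k=k'$ the leading order is $-(z-\theta_i)^{-1}+o_p(1)$ uniformly on $\Gamma_i$, while for $k\neq k'$ both in the block the leading order is $o_p(1)$ uniformly on $\Gamma_i$ by applying Theorem \ref{1101-1} with orthogonal vectors $\bbw_1,\bbw_2$ chosen as $\bbu_k,\bbu_{k'}$. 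Similarly, for $k\in\{r_{i-1}+1,\ldots,r_i\}$ but $k'$ outside this block, the resolvent entry is $o_p(1)$ on $\Gamma_i$. Integrating over $\Gamma_i$ via the residue theorem yields $\bbv_k^{\intercal}\Pi_i^{\text{samp}}\bbv_k=1+o_p(1)$ and $\bbv_k^{\intercal}\Pi_i^{\text{samp}}\bbv_{k'}=o_p(1)$ for $k'\neq k$.

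The main obstacle will be establishing the required uniformity of the resolvent estimates along $\Gamma_i$: the contour passes within a distance of order $n^{-1/2}\theta_i$ from the sample eigenvalues in the block, so naive bounds on $(\bbS_n-z\bbI)^{-1}$ blow up. I would address this by following the proof of Theorem \ref{0408-2}, where one first restricts $z$ to a deterministic annulus around $\theta_i$ of width comparable to the spectral gap but larger than the cluster's fluctuation, proves the CLT/convergence of the resolvent quadratic forms there, and then uses the eigenvalue rigidity implied by Theorem \ref{0831-1} to deform $\Gamma_i$ into that annulus without changing the contour integral. Once this uniform control is in place, putting the individual pieces together through the contour integral gives the claim.
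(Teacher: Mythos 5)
Your proposal is correct and follows essentially the paper's route: the paper proves this corollary by repeating the argument of Theorem \ref{0503-1}, namely representing $\sum_{j=r_{i-1}+1}^{r_i}\xi_j\xi_j^{\intercal}$ as a contour integral of $(\bbS_n-z\bbI)^{-1}$ around the cluster near $\theta_i$ and controlling $\bbv_k^{\intercal}(\bbS_n-z\bbI)^{-1}\bbv_k$ through the spiked/nonspiked block (Schur-complement) decomposition, which is exactly the structure you describe. Only note that Assumption \ref{0829-1h} together with Theorem \ref{0318-1} lets you keep $\Gamma_i$ at distance of order $\theta_i$ from the entire spectrum, so the near-cluster uniformity ``obstacle'' you raise does not actually arise, and the resolvent errors should be tracked relative to $(z-\theta_i)^{-1}$ (i.e.\ $o_p(\theta_i^{-1})$ uniformly on $\Gamma_i$), since an additive $o_p(1)$ error integrated over a contour of length of order $\theta_i$ would not be negligible.
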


Corollary \ref{0831-2} shows that the sample eigenvectors $\{\xi_j, j=r_{i-1}+1,..., r_j\}$ are close to the space spaned by $\{\bbv_j, j=r_{i-1}+1,..., r_j\}$. 


\section{Centralized sample covariance matrices}
\label{Centralized.sec}

So far we have focused on  the non-centralized sample covariance matrix $S_n$. We now turn to its centralized version
\[
\tilde \bbS_n = \frac{1}{n}\sum_{i=1}^n\bold\Gamma(\bbx_i-\bar \bbx)(\bbx_i-\bar \bbx)^{\intercal}\bold\Gamma^{\intercal}=\Gamma\bbX(\bbI-\frac{1}{n}\mathbf{1}\mathbf{1}^{\intercal})\bbX^{\intercal}\bold\Gamma^{\intercal},
\]
 where $\mathbf{1}$ is the $n\times 1$ vector with all elements being 1. Denote $(\bbI-\frac{1}{n}\mathbf{1}\mathbf{1}^{\intercal})$ by $\Upsilon$. First we have the following Lemma.
\begin{lem}\label{1101-1x}
Under the conditions of Theorem \ref{1101-1x}, we have
\begin{eqnarray}\label{1101.1x}
\frac{\sqrt n}{\tilde \sigma_1}\Big(\bbw_1^{\intercal}\bbX\Upsilon(n\bbI-\Upsilon\bbX^{\intercal}\frac{\bold\Sigma_1}{\theta}\bbX\Upsilon)^{-1}\bbX^{\intercal}\bbw_1+\tilde m_{\theta}(1)\Big)\stackrel{D}{\rightarrow} N\left(0, 1\right)
\end{eqnarray}
and
\begin{eqnarray}\label{1101.2x}
\frac{\sqrt n}{\tilde\sigma_{12}}\bbw_1^{\intercal}\bbX\Upsilon(n\bbI-\Upsilon\bbX^{\intercal}\frac{\bold\Sigma_1}{\theta}\bbX\Upsilon)^{-1}\Upsilon\bbX^{\intercal}\bbw_2\stackrel{D}{\rightarrow} N\left(0, 1\right)
\end{eqnarray}
where $\tilde \sigma_1^2=\sum_{j=1}^{p+l}[(\gamma_{4j}-3)\bbw_{1j}^4]+2$, $\tilde\sigma_{12}^2=\sum\limits_{s=1}^{p+l}[(\gamma_{4s}-3)\bbw_{1s}^2\bbw_{2s}^2]+1$ and $\bbw_{ij}$ is the $j$-th element of $\bbw_i$, $i=1,2$.
\end{lem}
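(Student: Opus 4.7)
The plan is to reduce Lemma \ref{1101-1x} to Theorem \ref{1101-1} by a rank-two resolvent perturbation, exploiting the fact that $\Upsilon = \bbI - \frac{1}{n}\mathbf{1}\mathbf{1}^{\intercal}$ differs from the identity by one rank. Set $M = \bbX^{\intercal}(\bold\Sigma_1/\theta)\bbX$ and $\tilde M = \Upsilon M\Upsilon$. A direct expansion gives
\[
\tilde M - M = -\tfrac{1}{n}\bigl(M\mathbf{1}\mathbf{1}^{\intercal} + \mathbf{1}\mathbf{1}^{\intercal}M\bigr) + \tfrac{1}{n^2}(\mathbf{1}^{\intercal}M\mathbf{1})\mathbf{1}\mathbf{1}^{\intercal},
\]
a matrix of rank at most two supported on $\mathrm{span}(\mathbf{1}, M\mathbf{1})$. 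Similarly, in the outer factor I would split $\bbX\Upsilon = \bbX - \bar\bbx\mathbf{1}^{\intercal}$, where $\bar\bbx = n^{-1}\bbX\mathbf{1}$.

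Next, I apply the resolvent identity
\[
(n\bbI - \tilde M)^{-1} = (n\bbI - M)^{-1} + (n\bbI - \tilde M)^{-1}(\tilde M - M)(n\bbI - M)^{-1}
\]
and expand the quadratic form in \eqref{1101.1x} as the quadratic form from Theorem \ref{1101-1},
\[
\bbw_1^{\intercal}\bbX(n\bbI - M)^{-1}\bbX^{\intercal}\bbw_1,
\]
plus an error term $\mathcal{E}_n$ consisting of bilinear forms in which $\mathbf{1}$ or $\bar\bbx$ appears at least once on each summand.

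The main technical step is to show $\sqrt n\,\mathcal{E}_n = o_p(1)$. Each piece of $\mathcal{E}_n$ factors into products of quantities of the form
\[
\tfrac{1}{\sqrt n}\bbw_1^{\intercal}\bbX(n\bbI - M)^{-1}\mathbf{1},\qquad \tfrac{1}{n}\mathbf{1}^{\intercal}M(n\bbI - M)^{-1}\mathbf{1},\qquad \bar\bbx^{\intercal}\bbw_1,
\]
and the analogous cross-term versions. Since $\mathbf{1}/\sqrt n$ is a deterministic unit vector independent of $\bbX$ and the entries of $\bbx_j$ are independent and mean zero, each of these resolvent bilinear forms is $O_p(n^{-1/2})$ by standard concentration (of the isotropic local-law type used to prove Theorem \ref{1101-1}, together with the assumption $\theta\to\infty$ which makes $(n\bbI - M)^{-1}$ close to $n^{-1}\bbI$). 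Products of two such factors are $O_p(n^{-1})$, which is $o_p(n^{-1/2})$. Once this is established, applying Theorem \ref{1101-1} to the unperturbed quadratic form gives \eqref{1101.1x}. The cross-term \eqref{1101.2x} is handled by the same argument, with the outer factor on the right replaced by $\Upsilon\bbX^{\intercal}\bbw_2$ and an entirely parallel decomposition.

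The main obstacle is Step~3, the probabilistic control of bilinear forms of the resolvent $(n\bbI - M)^{-1}$ in the special deterministic direction $\mathbf{1}/\sqrt n$. These estimates do not follow \emph{directly} from Theorem \ref{1101-1} (which handles directions orthogonal to $\bbU_2^{\intercal}$), but since $\mathbf{1}/\sqrt n$ has no special geometric relationship with $\bbU_2$ and the regime $\theta\gg p/n$ places $n\bbI - M$ uniformly away from its spectrum, the desired bounds follow from the same martingale-difference decomposition along the columns of $\bbX$ used in the proof of Theorem \ref{1101-1}; indeed, the only modification is to replace the test vectors $\bbw_1, \bbw_2$ there by $\mathbf{1}/\sqrt n$, with the corresponding variance estimate providing the $n^{-1/2}$ rate.
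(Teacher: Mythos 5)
Your proposal takes essentially the same route as the paper's own proof: view the centering $\Upsilon=\bbI-\frac{1}{n}\mathbf{1}\mathbf{1}^{\intercal}$ as a low-rank perturbation, expand via resolvent identities (exploiting cancellations such as $(n\bbI-\Upsilon\bbX^{\intercal}\tfrac{\bold\Sigma_1}{\theta}\bbX\Upsilon)^{-1}\mathbf{1}=\tfrac{1}{n}\mathbf{1}$), and reduce to Theorem \ref{1101-1} after checking that every correction term, being a product of bilinear forms such as $\tfrac{1}{n}\bbw_1^{\intercal}\bbX(n\bbI-\bbX^{\intercal}\tfrac{\bold\Sigma_1}{\theta}\bbX)^{-1}\mathbf{1}$, is $o_p(n^{-1/2})$, with those forms controlled by the same martingale-difference machinery used for Theorem \ref{1101-1} — which is exactly the paper's argument. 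Two small imprecisions, neither fatal: since $\mathbf{1}$ lives in the sample-index space the needed estimate is a mixed form whose \emph{mean} requires the Lindeberg/interpolation computation of the mean section (not only a variance bound), and a factor like $\tfrac{1}{n}\mathbf{1}^{\intercal}M(n\bbI-M)^{-1}\mathbf{1}$ is of order $p/(n\theta)$, hence only $o_p(1)$ rather than $O_p(n^{-1/2})$, which still suffices because it always multiplies a genuinely $O_p(n^{-1/2})$ factor.
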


By Lemma \ref{1101-1x} and checking carefully the proofs of the  main results, it can be seen that all arguments remain valid if  $\bbX$ is  replaced by $\bbX\Upsilon$ (note that $\Upsilon^2=\Upsilon$).   So Theorem \ref{0318-1}--Corollary \ref{0831-2} hold for $\frac{1}{n}\Gamma\bbX\Upsilon\bbX^{\intercal}\Gamma^{\intercal}$ as well.



\section{Numerical Results }
\label{sec6h}

In this section we illustrate some of the theoretical results obtained earlier through numerical experiments. We first use simulation to confirm that the asymptotic behavior of the spiked eigenvalues is indeed affected by the population eigenvectors.

Let $K=2$ and $\Lambda_{P}=\diag(\mu_3,...,\mu_p)$. Suppose that $\{\mu_i,\  i=3,...,p\}$ are i.i.d. copies of the uniform random variable $U(1, 2)$. Define $\bbv_1=(\frac{1}{\sqrt 2},\frac{1}{\sqrt 2})^{\intercal}$, $\bbv_2=(\frac{1}{\sqrt 2},-\frac{1}{\sqrt 2})^{\intercal}$, $\breve\bbV=(\bbv_1,\bbv_2)$ and $\Lambda_{S}=\diag(800,200)$. We now define two different population matrices
$$\bold\Sigma_2=\left(
 \begin{matrix}
   \bold\Lambda_S & 0\\
  0 &\bold\Lambda_P
  \end{matrix}
  \right),\qquad \bold\Sigma_3=\left(
 \begin{matrix}
   \breve\bbV\bold\Lambda_S\breve\bbV^{\intercal} & 0\\
  0 &\bold\Lambda_P
  \end{matrix}
  \right)\ .$$
  Then the eigenvalues of $\bold \Sigma_2$ and $\bold \Sigma_3$ are the same but the eigenvectors corresponding to the first two largest eigenvalues are different. Consider the case $p=n$ and $\bbX=(x_{ij})$ are i.i.d. $U(-\sqrt 3,\sqrt 3)$. Denote by  $\check \lambda_1$ and $\breve \lambda_1$ respectively the largest eigenvalues of the sample covariance matrices $\frac{1}{n}\bold \Sigma_2^{1/2}\bbX\bbX^{\intercal}\bold \Sigma_2^{1/2}$ and $\frac{1}{n}\bold \Sigma_3^{1/2}\bbX\bbX^{\intercal}\bold \Sigma_3^{1/2}$. Table \ref{tab1.1} reports the sample variance of the rescaled eigenvalues $\frac{\sqrt n\check \lambda_1}{800}$ and $\frac{\sqrt n\breve \lambda_1}{800}$ .
It can be seen that the behavior of the spiked sample eigenvalues is indeed affected by the population eigenvectors.

{\small  \begin{table}[htbp]
 \begin{center}
  \caption{ The variances of the rescaled largest eigenvalues }
 \label{tab1.1}
    \begin{tabular}{rrrrrrrrrr}
        \toprule
      p    & 200 & 400 & 600 & 800 & 1000 \\
              \midrule
    $\bold \Sigma_2$ & 0.8111
  & 0.7965 &	0.8287&	0.7574	& 0.7874
  \\
    $\bold \Sigma_3$ & 1.2507
  & 1.4051	&1.2800&	1.5012&	1.3911  \\
        \bottomrule
    \end{tabular}%
    \end{center}
    \end{table}
    }

We now consider estimating the number of factors under the factor model (\ref{0102.1}):
$$\bbY=\Lambda\bbF+\bbT\bbZ.$$
In the simulation, the entries of $\bbF$ and $\bbZ$ follow the standard Gaussian distribution. Consider two choices: $\bbT=\bbT_1$ or $\bbT_2$, where $\bbT_1=\bbI$, $\bbT_2=\diag(\underbrace{1,1,...,1}_{p/2},\underbrace{\frac{1}{\sqrt 2},...,\frac{1}{\sqrt 2}}_{p/2})$. Let $\Lambda$ be a $p\times K$  matrix with nonzero entries being $(\Lambda_{11},...,\Lambda_{KK})=(\sqrt{b^2_1-1},...,\sqrt{b^2_K-1})$ where $K=5\lceil n^{1/7}\rceil+1$, and $(b_1,..,b_K)=\sqrt{(6,...,6+K-1)*r}+1$, $0\le r\le 1$.


Since the estimator in \cite{O09} performs better than that in \cite{BS002}, we shall only consider the estimator given in \cite{O09} for our comparisons. We compare the accuracy of estimating the number of factors $K$ for three methods: our procedure proposed in Section \ref{sec5h}, the method introduced in \cite{O09}, and  the approach given in \cite{B17}, which are denoted by CHP, Ons, and BYK, respectively.
Here we omit the simulation results of BIC used in \cite{B17} for reasons of space.   The initial value of $\hat p_0$ in (\ref{0604.7}) is replaced by $\lambda_{15\lceil n^{1/6}\rceil }+\log n \times n^{-5/9}$ according to our extensive simulations in order to reduce the number of updating iteration. 
 Here we replace $\lambda_{\lceil n^{1/6}\rceil }$ by $\lambda_{15\lceil n^{1/6}\rceil }$ and one should note that all of the conclusions in Section \ref{sec5h} still hold since $15$ is a constant.
 The approach in Section 5.3 of \cite{O09} uses the ratio of the differences of the adjacent sample eigenvalues to conduct the sequential test of
$$H_0: K=k_0\ vs \ H_1: k_0<K<k_1,$$
from $k_0=0$ to $k_0=k_1-1$.  
\cite{B17} uses AIC based on sample eigenvalues to estimate $K$.

Different combinations of $n$ and $p$ are considered. The following tables report the proportion of times  the number of factors is correctly identified, i.e. $\hat K=K$, where for each $(n,p)$ we repeat 500 times.  Different choices of $r$ (ranging from 0.3 to 1) are also considered.
From Tables \ref{tab1} and \ref{tab2}, one can see that the accuracy of our approach increases as $(n,p)$ become larger. Comparing to \cite{O09}, one can find that our approach works much better when the number of factors increases with $n$. This is reasonable since the estimator given in \cite{O09} is very sensitive to the predetermined  non-spiked eigenvalue (i.e. $k_1$ in \cite{O09}). If $k_1$ is too large, the power may be poor. Tables \ref{tab1} and \ref{tab2} show that the method based on the AIC criterion and our procedure have similar performance. But as mentioned earlier in Remark \ref{rk1}, the model in \cite{B17} only allows that $\mu_{K+1}=...=\mu_{p}=1$, which is a special case of what we consider in the present paper. Indeed, Table \ref{tab3} also confirms that for the non-identity matrix $\bbT_2$, the method based on the AIC criterion performs much worse than our approach.  Therefore, our procedure is preferred for the case where $\mu_{K+1},...,\mu_{p}$ are unknown.

\begin{table}[!htbp]
 \begin{center}
  \caption{  \label{tab1} Ratio of Identifying The Correct Number of Factors with $\bbT_1$ }

    \begin{tabular}{lccc|ccc|ccc}
        \toprule
       $r\backslash (n,p)$  & & \multicolumn{1}{l}{(50,50)}&  &  & \multicolumn{1}{l}{(50,100)} & & & \multicolumn{1}{l}{(50,150)}&  \\
              \midrule
  & CHP & Ons& BYK &  CHP & Ons& BYK &CHP & Ons& BYK   \\
  \midrule
    0.3   & 0.608 & 0.052& 0.610 & 0.192 & 0.072& 0.330 & 0.068 &0.060 & 0.122  \\
    0.4   & 0.816 & 0.064 & 0.706& 0.442  &0.046 & 0.618& 0.184 &0.056 & 0.368 \\
    0.5   & 0.904 & 0.044 & 0.662 & 0.676 & 0.040 &0.788 & 0.450  &0.062 & 0.606 \\
    0.6   & 0.892 & 0.038 & 0.612& 0.832 & 0.044 & 0.880& 0.638   & 0.064&0.800 \\
    0.7   & 0.906 & 0.050 &0.636 &0.880  & 0.044 & 0.870&0.756  & 0.064&0.866 \\
    0.8   & 0.914 & 0.060 & 0.638&0.918 & 0.048 & 0.886 &0.868   &0.070 & 0.880\\
    0.9   & 0.908 & 0.054 &0.648 &0.948 & 0.058 & 0.866 & 0.916  &0.056& 0.910\\
    1.0     & 0.914 & 0.050 & 0.616&0.946 & 0.052 & 0.872 &0.912 & 0.082& 0.896\\

        \bottomrule
    \end{tabular}%
      \end{center}

 \begin{center}
  \caption{  \label{tab2} Ratio of Identifying The Correct Number of Factors with $\bbT_1$}

    \begin{tabular}{lccc|ccc|ccc}
        \toprule
       $r\backslash (n,p)$  & & \multicolumn{1}{l}{(100,100)}&  &  & \multicolumn{1}{l}{(100,200)} & & & \multicolumn{1}{l}{(100,300)}&  \\
              \midrule
  & CHP & Ons& BYK &  CHP & Ons& BYK &CHP & Ons& BYK   \\
  \midrule
    0.3   & 0.954 & 0.052& 0.974 & 0.772 & 0.034& 0.854 & 0.392 &0.076 & 0.482  \\
    0.4   & 0.980 & 0.038 & 0.982& 0.942  &0.034& 0.984& 0.782 &0.058 & 0.908 \\
    0.5   & 0.956 & 0.056 & 0.974 & 0.964 & 0.052 &0.990 & 0.938  &0.056 & 0.976 \\
    0.6   & 0.972 & 0.050 & 0.976& 0.980 & 0.048 & 0.994& 0.966   & 0.066&0.990 \\
    0.7   & 0.970 & 0.058 &0.974 &0.978  & 0.050 & 0.986&0.972  & 0.074&0.996 \\
    0.8   & 0.954 & 0.040 & 0.974&0.972 & 0.042 & 0.998 &0.984  &0.064 & 0.980\\
    0.9   & 0.954 & 0.050 &0.980&0.970 & 0.042 & 0.986 & 0.980  &0.044& 0.984\\
    1.0     & 0.950 & 0.052 & 0.972&0.958 & 0.052 & 0.984 &0.982 & 0.074& 0.988\\

        \bottomrule
    \end{tabular}
      \end{center}

 \begin{center}
  \caption{  \label{tab3} Ratio of Identifying The Correct Number of Factors with $\bbT_2$}

    \begin{tabular}{lccc|ccc|ccc}
        \toprule
       $r\backslash (n,p)$  & & \multicolumn{1}{l}{(100,100)}&  &  & \multicolumn{1}{l}{(100,200)} & & & \multicolumn{1}{l}{(100,300)}&  \\
              \midrule
  & CHP & Ons& BYK &  CHP & Ons& BYK & CHP & Ons& BYK   \\
  \midrule
    0.3   & 0.946  & 0.062  & 0.490  & 0.938  & 0.062  & 0.658  & 0.792  & 0.040  & 0.716  \\
    0.4   & 0.928  & 0.042  & 0.454  & 0.974  & 0.042  & 0.624  & 0.968  & 0.044  & 0.710  \\
    0.5   & 0.944  & 0.044  & 0.424  & 0.968  & 0.058  & 0.682  & 0.986  & 0.038  & 0.704  \\
    0.6   & 0.926  & 0.052  & 0.440  & 0.966  & 0.046  & 0.672  & 0.978  & 0.066  & 0.654  \\
    0.7   & 0.926  & 0.034  & 0.434  & 0.970  & 0.066  & 0.662  & 0.972  & 0.040  & 0.670  \\
    0.8   & 0.918  & 0.060  & 0.450  & 0.978  & 0.060  & 0.650  & 0.986  & 0.042  & 0.660  \\
    0.9   & 0.928  & 0.052  & 0.434  & 0.978  & 0.052  & 0.608  & 0.980  & 0.058  & 0.670  \\
    1.0   & 0.930  & 0.048  & 0.410  & 0.980  & 0.036  & 0.614  & 0.976  & 0.048  & 0.658  \\

        \bottomrule
    \end{tabular}%
      \end{center}
\end{table}%

\section{Proofs}
\label{proofs.sec}

In this section, we prove one of the main results, Theorem \ref{1101-1}. The proof of Theorem \ref{0408-2} is involved. For reasons of space, we prove Theorem \ref{0408-2}  in detail in the supplement \cite{CHP17}.
The proofs of the other results and additional technical lemmas are also provided in the supplement \cite{CHP17}.

\subsection{Proof of Theorem \ref{1101-1}}\label{Appb}

The main idea of this proof is to first express $\bbw_1^{\intercal}\bbX(n\bbI-\bbX^{\intercal}\frac{\bold\Sigma_1}{\theta}\bbX)^{-1}\bbX^{\intercal}\bbw_1$ as a sum of martingale differences and then apply the central limit theorem for the martingale difference.

We below consider the case $p\gtrsim n$ and  prove (\ref{1101.1}) only because the case $\frac{p}{n}\rightarrow 0$ and (\ref{1101.2}) can be proved similarly. First of all, we need to do truncation and centralization on $\bbx_{ij}$ as in the first paragraph of Section \ref{0914-1}  in the supplement \cite{CHP17}. In fact, by (\ref{0321.3})-(\ref{0912.1}),  we conclude that the truncation and centralization do not affect the CLT. i.e. we can get the following inequality similar to (\ref{1014.1h})
$$\bbw_1^{\intercal}\bbX(n\bbI-\bbX^{\intercal}\frac{\bold\Sigma_1}{\theta}\bbX)^{-1}\bbX^{\intercal}\bbw_1=\bbw_1^{\intercal}\tilde \bbX(n\bbI-\tilde \bbX^{\intercal}\frac{\bold\Sigma_1}{\theta}\tilde \bbX)^{-1}\tilde \bbX^{\intercal}\bbw_1+o_p(\frac{1}{\sqrt n}),$$
where $\tilde \bbX$ is the truncated and centralized version of $\bbX$.
The argument is standard and we omit the details here. Therefore, for simplicity we below assume that
$$E\bbx_{ij}=0,\quad |\bbx_{ij}|\le \delta_n \sqrt[4]{np}. $$

\subsection*{Calculation of The Variance}
Define the following events
$$F_d=\{\|\frac{1}{n}\bbX^{\intercal}\bold\Sigma_1\bbX\|\le 4\|\bold\Sigma_1\|(1+\frac{p}{n})\}, \ F^{(k)}_d=\{\|\frac{1}{n}\bbX_k^{\intercal}\bold\Sigma_1\bbX_k\|\le 4\|\bold\Sigma_1\|(1+\frac{p}{n})\}, k=1,...,n,$$
where $\bbX_k=\bbX-\bbx_k\bbe_k^{\intercal}$, $\bbx_k$ is the $k$-th column of $\bbX$ and $\bbe_k=(0,..,0,1,0,...,0)^{\intercal}$ is a $M$-dimensional vector with only $k$-th element being 1.
 By Theorem 2 of \cite{BG2012}, we have
 \begin{equation}\label{1101.5}
 I(F_d)=1 \quad {\rm and} \quad  I(F^{(k)}_d)=1, \ k=1,...,n
 \end{equation}
with high probability. 

 We define $\frac{\bold\Sigma_1}{\theta}=\tilde{\bold\Sigma}_1$, $\bbA=\bbI-\frac{1}{n}\bbX^{\intercal}\tilde{\bold\Sigma}_1\bbX$, $\bbA_k=\bbI-\frac{1}{n}\bbX_k^{\intercal}\tilde{\bold\Sigma}_1\bbX_k$ and $\bbA_{(k)}=\bbA_k-\frac{1}{n}\bbX_k^{\intercal}\tilde{\bold\Sigma}_1\bbx_k\bbe_k^{\intercal}$. Then
 $\bbA=\bbA_k-\frac{1}{n}(\bbe_k\bbx_k^{\intercal}\tilde{\bold\Sigma}_1\bbX_k+\bbX_k^{\intercal}\tilde{\bold\Sigma}_1\bbx_k\bbe_k^{\intercal}+\bbe_k\bbx_k^{\intercal}\tilde{\bold\Sigma}_1\bbx_k\bbe_k^{\intercal})$. Therefore,
\begin{eqnarray}\label{1127.1}
\bbw_1^{\intercal}\bbX(n\bbI-\bbX^{\intercal}\frac{\bold\Sigma_1}{\theta}\bbX)^{-1}\bbX^{\intercal}\bbw_1=\frac{1}{n}\bbw_1^{\intercal}\bbX\bbA^{-1}\bbX^{\intercal}\bbw_1.
\end{eqnarray}

By the definition of $\bbX_k$ and $\bbA_k$, we observe that the $k$-th row and $k$-th colomn of $\bbA_k$ are 0 except for the diagonal entry. Hence it is not hard to conclude the following important facts
\begin{eqnarray}\label{1101.3}
\bbe_k^{\intercal}\bbA_k^{-1}\bbe_k=1,
\end{eqnarray}

\begin{eqnarray}\label{1101.4}
\bbe_i^{\intercal}\bbA_k^{-1}\bbe_k=0, \ \ i\neq k
\end{eqnarray}
and
\begin{eqnarray}\label{1101.4h}
\bbX_k\bbA_k^{-1}\bbe_k=\bbX_k\bbe_k=0.
\end{eqnarray}
   In the sequel, we prove the central limit theorem for $\frac{1}{n}\bbw_1^{\intercal}\bbX\bbA^{-1}\bbX^{\intercal}\bbw_1I(F_d)$ instead of $\frac{1}{n}\bbw_1^{\intercal}\bbX\bbA^{-1}\bbX^{\intercal}\bbw_1$. In fact, it follows from (\ref{1101.5}) that $I(F_d)=1$ with high probability. Therefore $\frac{1}{n}\bbw_1^{\intercal}\bbX\bbA^{-1}\bbX^{\intercal}\bbw_1$ and $\frac{1}{n}\bbw_1^{\intercal}\bbX\bbA^{-1}\bbX^{\intercal}\bbw_1I(F_d)$ have the same central limit theorem. Let $\mathbb{E}_k=\mathbb{E}(.|\bbx_1,...,\bbx_k)$, $\mathbb{E}=\mathbb{E}(.)$  and write
\begin{eqnarray}\label{0314.1}
&&\bbw_1^{\intercal}\bbX\bbA^{-1}\bbX^{\intercal}\bbw_1I(F_d)-\mathbb{E}\bbw_1^{\intercal}\bbX\bbA^{-1}\bbX^{\intercal}\bbw_1I(F_d)\\
&&=\sum_{k=1}^n(\mathbb{E}_k-\mathbb{E}_{k-1})\bbw_1^{\intercal}\bbX\bbA^{-1}\bbX^{\intercal}\bbw_1I(F_d)\non
&&=\sum_{k=1}^n(\mathbb{E}_k-\mathbb{E}_{k-1})\big(\bbw_1^{\intercal}\bbX\bbA^{-1}\bbX^{\intercal}\bbw_1I(F_d)-\bbw_1^{\intercal}\bbX_k\bbA_k^{-1}\bbX_k^{\intercal}\bbw_1I(F^{(k)}_d)\big)\non
&&=\sum_{k=1}^n(\mathbb{E}_k-\mathbb{E}_{k-1})\big(\bbw_1^{\intercal}\bbX\bbA^{-1}\bbX^{\intercal}\bbw_1-\bbw_1^{\intercal}\bbX_k\bbA_k^{-1}\bbX_k^{\intercal}\bbw_1\big)I(F_d)+o_p(n^{-2})\non
&&=\sum_{k=1}^n(\mathbb{E}_k-\mathbb{E}_{k-1})(I_1+2I_2+I_3-\bbw_1^{\intercal}\bbX_k\bbA_k^{-1}\bbX_k^{\intercal}\bbw_1)I(F_d)+o_p(n^{-2}),\nonumber
\end{eqnarray}
where the third equality follows from (\ref{1101.5}),
$I_1=(\bbw_1^{\intercal}\bbx_k)^2\bbe_k^{\intercal}\bbA^{-1}\bbe_k$, $I_2=\sum_{i\neq k}\bbw_1^{\intercal}\bbx_k\bbw_1^{\intercal}\bbx_i\bbe_i^{\intercal}\bbA^{-1}\bbe_k,$
 and
 $I_3=\sum_{i,j\neq k}\bbw_1^{\intercal}\bbx_i\bbw_1^{\intercal}\bbx_j\bbe^{\intercal}_i\bbA^{-1}\bbe_j.$
We define
 \begin{eqnarray}\label{0324.4}
 a_k=1-\frac{1}{n}(\bbx_k^{\intercal}\tilde{\Sigma}_1\bbX_k\bbA_{(k)}^{-1}\bbe_k+\bbx_k^{\intercal}\tilde{\Sigma}_1\bbx_k\bbe_k^{\intercal}\bbA_{(k)}^{-1}\bbe_k)
 \end{eqnarray}and
\begin{eqnarray}\label{0324.5}
b_k=1-\frac{1}{n}\bbe_k^{\intercal}\bbA_k^{-1}\bbX_k^{\intercal}\tilde{\Sigma}_1\bbx_k.
 \end{eqnarray}

 We next simplify the formula. Noting that $\bbw_1^{\intercal}\bbX=\bbw_1^{\intercal}\bbX_k+\bbw_1^{\intercal}\bbx_k\bbe_k^{\intercal}$, by the formulas
  \begin{eqnarray}\label{1101.11}
  \bbW^{-1}=\bbQ^{-1}-\frac{\bbQ^{-1}(\bbW-\bbQ)\bbQ^{-1}}{1+tr(\bbQ^{-1}(\bbW-\bbQ))}
  \end{eqnarray}
  and
    \begin{eqnarray}\label{1101.11h}
  (\bbQ+\sum_{j=1}^mab_j^{\intercal})^{-1}a=\frac{\bbQ^{-1}a}{1+\sum_{j=1}^mb_j^{\intercal}\bbQ^{-1}a},
  \end{eqnarray}
  we have
 \begin{eqnarray}\label{0314.5}
 &&\\
\bbA^{-1}&=&\bbA_{(k)}^{-1}+\frac{\bbA_{(k)}^{-1}(\bbe_k\bbx_k^{\intercal}\tilde{\Sigma}_1\bbX_k+\bbe_k\bbx_k^{\intercal}\tilde{\Sigma}_1\bbx_k\bbe_k^{\intercal})\bbA_{(k)}^{-1}}{na_k}\non
&=&\bbA_k^{-1}+\frac{\bbA_k^{-1}\bbX_k^{\intercal}\tilde{\Sigma}_1\bbx_k\bbe_k^{\intercal}\bbA_k^{-1}}{nb_k}+\frac{\bbA_{(k)}^{-1}(\bbe_k\bbx_k^{\intercal}\tilde{\Sigma}_1\bbX_k+\bbe_k\bbx_k^{\intercal}\tilde{\Sigma}_1\bbx_k\bbe_k^{\intercal})\bbA_{(k)}^{-1}}{na_k}\nonumber
\end{eqnarray}
and
\begin{eqnarray}\label{0314.2}
\quad I_1&=&(\bbw_1^{\intercal}\bbx_k)^2\bbe_k^{\intercal}\bbA^{-1}\bbe_k=\frac{(\bbw_1\bbx_k)^2\bbe_k^{\intercal}\bbA_{(k)}^{-1}\bbe_k}{a_k}\\
&=&\frac{(\bbw_1\bbx_k)^2\bbe_k^{\intercal}\bbA_k^{-1}\bbe_k}{a_k(1-\frac{1}{n}\bbe_k^{\intercal}\bbA^{-1}_k\bbX_k^{\intercal}\tilde{\Sigma}_1\bbx_k)}=\frac{(\bbw_1\bbx_k)^2\bbe_k^{\intercal}\bbA_k^{-1}\bbe_k}{a_kb_k}=\frac{(\bbw_1\bbx_k)^2}{a_kb_k}.\nonumber
\end{eqnarray}
Moreover, it follows from (\ref{1101.3}), (\ref{1101.4}) and (\ref{1101.11}) that
 \begin{eqnarray}\label{1101.13}
b_k=1-\frac{1}{n}\bbe_k^{\intercal}\bbA_k^{-1}\bbX_k^{\intercal}\tilde{\Sigma}_1\bbx_k=1
 \end{eqnarray} and
 \begin{eqnarray}\label{1101.12}
\quad\quad a_k&=&1-\frac{1}{n}\bbx_k^{\intercal}\tilde{\Sigma}_1\bbX_k\bbA_{(k)}^{-1}\bbe_k=1-\frac{1}{n^2}\bbe_k^{\intercal}\bbA_k^{-1}\bbe_k\bbx_k^{\intercal}\tilde{\Sigma}_1\bbX_k\bbA_k^{-1}\bbX_k^{\intercal}\tilde{\Sigma}_1\bbx_k\\
 &=&1-\frac{1}{n^2}\bbx_k^{\intercal}\tilde{\Sigma}_1\bbX_k\bbA_k^{-1}\bbX_k^{\intercal}\tilde{\Sigma}_1\bbx_k.\nonumber
 \end{eqnarray}
By the Cauchy interlacing property we know
 \begin{eqnarray}
&&\quad  \frac{1}{n^2}\bbx_k^{\intercal}\tilde{\Sigma}_1\bbX_k\bbA_k^{-1}\bbX_k^{\intercal}\tilde{\Sigma}_1\bbx_kI(F_d)\le\frac{1}{n^2}\bbx_k^{\intercal}\tilde{\Sigma}_1\bbx_k\|\tilde{\Sigma}_1^{1/2}\bbX_k\bbA_k^{-1}\bbX_k^{\intercal}\tilde{\Sigma}_1^{1/2}\|I(F_d)\\
 &&=\frac{1}{n^2}\bbx_k^{\intercal}\tilde{\Sigma}_1\bbx_k\|\bbA_k^{-1}\bbX_k^{\intercal}\tilde{\Sigma}_1\bbX_k\|I(F_d)\le \frac{1}{n^2} \bbx_k^{\intercal}\tilde{\Sigma}_1\bbx_k\|\bbA_k^{-1}\|\|\bbX_k^{\intercal}\tilde{\Sigma}_1\bbX_k\|I(F_d)\non
 &&\le 2(\frac{p}{n\theta})^2.\nonumber
 \end{eqnarray}
This implies that
\begin{eqnarray}\label{0321.1}
a_kI(F_d)=1+O((\frac{p}{n\theta})^2).
\end{eqnarray}

As for the term $i\neq k$, by (\ref{1101.4}), (\ref{1101.4h}), (\ref{1101.11}) and (\ref{1101.11h}) we have
\begin{equation}\label{0314.12}
\bbA^{-1}\bbe_k=\frac{\bbA_{(k)}^{-1}\bbe_k}{a_k}=\frac{\bbA_k^{-1}\bbe_k}{a_k}+\frac{\bbA_k^{-1}\bbX_k^{\intercal}\tilde{\Sigma}_1\bbx_k}{a_kb_k} = \frac{\bbA_k^{-1}\bbe_k}{a_k}+\frac{\bbA_k^{-1}\bbX_k^{\intercal}\tilde{\Sigma}_1\bbx_k}{a_k}.
\end{equation}
We then conclude that
\begin{eqnarray}\label{0330.1}
I_2=\sum_{i\neq k}\bbw_1^{\intercal}\bbx_k\bbw_1^{\intercal}\bbx_i\bbe_i^{\intercal}\bbA^{-1}\bbe_k=\frac{\bbw_1^{\intercal}\bbX_k\bbA_k^{-1}\bbX_k^{\intercal}\tilde{\Sigma}_1\bbx_k\bbx^{\intercal}_k\bbw_1}{na_k}.
\end{eqnarray}
It follows from (\ref{1101.4}), (\ref{1101.4h}) and  (\ref{0314.5}) that for $i,j\neq k$
\begin{eqnarray}\label{0314.3}
&&\\
&&I_3=\sum_{i,j\neq k}\bbw_1^{\intercal}\bbx_i\bbw_1^{\intercal}\bbx_j\bbe^{\intercal}_i\bbA^{-1}\bbe_j\non
&&=\sum_{i,j\neq k}\bbw_1^{\intercal}\bbx_i\bbw_1^{\intercal}\bbx_j\bbe^{\intercal}_i\bbA_k^{-1}\bbe_j+\sum_{i,j\neq k}\bbw_1^{\intercal}\bbx_i\bbw_1^{\intercal}\bbx_j\bbe^{\intercal}_i\frac{\bbA_{(k)}^{-1}(\bbe_k\bbx_k^{\intercal}\tilde{\Sigma}_1\bbx_k\bbe_k^{\intercal}+\bbe_k\bbx_k^{\intercal}\tilde{\Sigma}_1\bbX_k)\bbA_{(k)}^{-1}}{na_k}\bbe_j\non
&&=\bbw_1^{\intercal}\bbX_k\bbA_k^{-1}\bbX_k^{\intercal}\bbw_1+\frac{\bbw_1^{\intercal}\bbX_k\bbA_{(k)}^{-1}(\bbe_k\bbx_k^{\intercal}\tilde{\Sigma}_1\bbx_k\bbe_k^{\intercal}+\bbe_k\bbx_k^{\intercal}\tilde{\Sigma}_1\bbX_k)\bbA_{(k)}^{-1}\bbX_k^{\intercal}\bbw_1}{na_k}\nonumber
.\end{eqnarray}
Consider $(\mathbb{E}_k-\mathbb{E}_{k-1})(I_3-\bbw_1^{\intercal}\bbX_k\bbA_k^{-1}\bbX_k^{\intercal}\bbw_1)I(F_d)$ next.

 We claim that
\begin{eqnarray}\label{1102.1}
\frac{\bbw_1^{\intercal}\bbX_k\bbA_{(k)}^{-1}(\bbe_k\bbx_k^{\intercal}\tilde{\Sigma}_1\bbx_k\bbe_k^{\intercal}+\bbe_k\bbx_k^{\intercal}\tilde{\Sigma}_1\bbX_k)\bbA_{(k)}^{-1}\bbX_k^{\intercal}\bbw_1}{na_k}
\end{eqnarray}
is negligible. Let $\bbB_k=\tilde{\Sigma}_1\bbX_k\bbA_k^{-1}\bbX_k^{\intercal}\bbw_1\bbw_1^{\intercal}\bbX_k\bbA_k^{-1}\bbX_k^{\intercal}\tilde{\Sigma}_1$.
Indeed, by (\ref{1101.11}) and (\ref{1101.3})-(\ref{1101.4h}) we have
$\bbA_{(k)}^{-1}=\bbA_k^{-1}+\frac{1}{n}\bbA_k^{-1}\bbX_k^{\intercal}\tilde{\Sigma}_1\bbx_k\bbe_k^{\intercal}\bbA_k^{-1}.$
This, together with (\ref{1101.3}), (\ref{1101.4}) and (\ref{1101.4h}) implies that
\begin{eqnarray*}
(\ref{1102.1})&=&\frac{\bbw_1^{\intercal}\bbX_k\bbA_k^{-1}\bbX_k^{\intercal}\tilde{\Sigma}_1\bbx_k\bbe_k^{\intercal}\bbA_k^{-1}\bbe_k\bbx_k^{\intercal}\tilde{\Sigma}_1\bbX_k\bbA_k^{-1}\bbX_k^{\intercal}\bbw_1}{n^2a_k}
=\frac{\bbx_k^{\intercal}\bbB_k\bbx_k}{n^2a_k}.
\end{eqnarray*}
It follows from (\ref{0314.3}) and (\ref{1101.3})-(\ref{1101.4h}) that
$$\sum_{k=1}^n(\mathbb{E}_k-\mathbb{E}_{k-1})(I_3-\bbw_1^{\intercal}\bbX_k\bbA_k^{-1}\bbX_k^{\intercal}\bbw_1)I(F_d)=\sum_{k=1}^n(\mathbb{E}_k-\mathbb{E}_{k-1})\frac{\bbx_k^{\intercal}\bbB_k\bbx_k}{n^2a_k}I(F_d^{(k)})+o_p(n^{-2}).$$
 Considering the second moment of the above equation, by Lemma 8.10 of \cite{BS06} we have
\begin{eqnarray}\label{1102.2}
&&\sum_{k=1}^n\mathbb{E}|(\mathbb{E}_k-\mathbb{E}_{k-1})\frac{\bbx_k^{\intercal}\bbB_k\bbx_k}{n^2a_k}|^2I(F^{(k)}_d)\le \frac{4}{n^4}\sum_{k=1}^n\mathbb{E}|\bbx_k^{\intercal}\bbB_k\bbx_k|^2I(F_d^{(k)})\\
&&\le \frac{8}{n^4}\sum_{k=1}^n\mathbb{E}|\bbx_k^{\intercal}\bbB_k\bbx_k-tr\bbB_k|^2I(F_d^{(k)})+\frac{8}{n^4}\sum_{k=1}^n\mathbb{E}|tr\bbB_k|^2I(F_d^{(k)})\non
&&\le \frac{Cp^2}{n\theta^2}\ll N,\nonumber
\end{eqnarray}
where we used the inequality
$$tr\bbB_k\le \bbX_k\bbA_k^{-1}\bbX_k^{\intercal}\tilde{\Sigma}_1^2\bbX_k\bbA_k^{-1}\bbX_k^{\intercal}I(F^{(k)}_d)=O(\frac{p^2}{\theta^2}).$$
We conclude that
$$\frac{1}{n}\sum_{k=1}^n(\mathbb{E}_k-\mathbb{E}_{k-1})(I_3-\bbw_1^{\intercal}\bbX_k\bbA_k^{-1}\bbX_k^{\intercal}\bbw_1)I(F_d)=o_p(\frac{1}{\sqrt n}),$$
which is negligible.

Next we consider $I_1$ and $I_2$. It follows from (\ref{0314.2}) and (\ref{0330.1}) that
\begin{eqnarray}\label{0315.1}
&&\frac{1}{\sqrt{n}}\sum_{k=1}^n(\mathbb{E}_{k}-\mathbb{E}_{k-1})(I_1+2I_2)I(F_d)\\
&&=\frac{2}{\sqrt{n}}\sum_{k=1}^n(\mathbb{E}_{k}-\mathbb{E}_{k-1})\big(\frac{(\bbw_1\bbx_k)^2}{2a_k}+\frac{\bbw_1^{\intercal}\bbX_k\bbA_k^{-1}\bbX_k^{\intercal}\tilde{\Sigma}_1\bbx_k\bbx_k^{\intercal}\bbw_1}{na_k}\big)I(F_d).\nonumber
\end{eqnarray}
We claim that the second term of (\ref{0315.1}) is negligible. Actually, similar to (\ref{1102.2}), it is easy to show that
$$\sum_{k=1}^n(\mathbb{E}_{k}-\mathbb{E}_{k-1})\frac{\bbw_1^{\intercal}\bbX_k\bbA_k^{-1}\bbX_k^{\intercal}\tilde{\Sigma}_1\bbx_k\bbx_k^{\intercal}\bbw_1}{na_k}I(F_d)=o_p(\sqrt n)$$
Therefore, the leading term of (\ref{0315.1}) is
\begin{eqnarray*}
&&\frac{1}{\sqrt{n}}\sum_{k=1}^n(\mathbb{E}_{k}-\mathbb{E}_{k-1})\frac{(\bbw_1^{\intercal}\bbx_k)^2}{a_k}I(F_d)\non
&=&\frac{1}{\sqrt{n}}\sum_{k=1}^n(\mathbb{E}_{k}-\mathbb{E}_{k-1})\frac{(1-a_k)(\bbw_1^{\intercal}\bbx_k)^2}{a_k}I(F_d)+\frac{1}{\sqrt{n}}\sum_{k=1}^n(\mathbb{E}_{k}-\mathbb{E}_{k-1})(\bbw_1^{\intercal}\bbx_k)^2I(F_d).
\end{eqnarray*}

Similar to (\ref{1102.2}), by (\ref{0321.1}) we can show that
$$\frac{1}{\sqrt{n}}\sum_{k=1}^n(\mathbb{E}_{k}-\mathbb{E}_{k-1})\frac{(1-a_k)(\bbw_1^{\intercal}\bbx_k)^2}{a_k}I(F_d)=o_p(1).$$
It suffices to show CLT for
\begin{eqnarray}\label{1127.3}
\frac{1}{\sqrt{n}}\sum_{k=1}^n(\mathbb{E}_{k}-\mathbb{E}_{k-1})(\bbw_1^{\intercal}\bbx_k)^2=\frac{1}{\sqrt{n}}\sum_{k=1}^n\left[(\bbw_1^{\intercal}\bbx_k)^2-1\right].
\end{eqnarray}
By the CLT for the sum of i.i.d. variables, we conclude that
$$\frac{1}{\sqrt{n}\sigma}\sum_{k=1}^n(\mathbb{E}_{k}-\mathbb{E}_{k-1})(\bbw_1^{\intercal}\bbx_k)^2\stackrel{D}{\rightarrow} N(0,\sigma^2),$$
where
\begin{eqnarray}\label{0411.1}
\sigma^2&=&\frac{1}{n}\mathbb{E}\left[(\bbw_1^{\intercal}\bbx_k)^2-1\right]^2=\frac{\sum_{i=1}^{p+l}\gamma_{4i}\bbw_{1i}^4+3\sum_{i\neq j}^{p+l}\bbw_{1i}^2\bbw_{1j}^2-1}{n}\\
&=&\sum_{i=1}^{p+l}(\gamma_{4i}-3)\bbw_{1i}^4+2.\nonumber
\end{eqnarray}

\subsection{ Calculation of the Mean}\label{calculatemean}

This section is to calculate the expectation of $\frac{1}{n} \bbw_1^{\intercal}\bbX\bbA^{-1}\bbX^{\intercal}\bbw_1I(F_d)$. The strategy is to prove that
\begin{eqnarray}\label{1129.12}
\sqrt n\mathbb{E}\left[\frac{1}{n} \bbw_1^{\intercal}\bbX^0\bbA^{-1}(\bbX^0)^{\intercal}\bbw_1I(F_d)+\tilde m_{\theta}(1)\right]\rightarrow 0,
\end{eqnarray}
 and
\begin{eqnarray}\label{1129.13}
\frac{1}{\sqrt n} \mathbb{E}\left[\bbw_1^{\intercal}\bbX\bbA^{-1}\bbX^{\intercal}\bbw_1I(F_d)-\bbw_1^{\intercal}\bbX^0\bbA^{-1}(\bbX^0)^{\intercal}\bbw_1I(F_d)\right]\rightarrow 0,
\end{eqnarray}
 where $\bbX^0=(\bbx_1^{0},...,\bbx_n^{0})$ is $(p+l)\times n$ matrix with i.i.d. standard Gaussian random variables. As before, we omit $I(F_d)$ in the following proof.

  We prove (\ref{1129.13}) first by the Lindeberg's strategy. Define
  $$\bbZ^1_{k}=\sum_{i=1}^k\bbx_i\bbe_i^{\intercal}+\sum_{i=k+1}^n\bbx^{0}_i\bbe_i^{\intercal}, \ \bbZ^0_{k}=\sum_{i=1}^{k-1}\bbx_i\bbe_i^{\intercal}+\sum_{i=k}^n\bbx^{0}_i\bbe_i^{\intercal}, $$  $$\bbZ_{k}=\sum_{i=1}^{k-1}\bbx_i\bbe_i^{\intercal}+\sum_{i=k+1}^N\bbx^{0}_i\bbe_i^{\intercal},\ \hat \bbA^1_k=\bbI-\frac{1}{n}(\bbZ^1_k)^{\intercal}\tilde{\Sigma}_1\bbZ^1_k,$$ $$\hat \bbA^0_k=\bbI-\frac{1}{n}(\bbZ^0_k)^{\intercal}\tilde{\Sigma}_1\bbZ^0_k \ \ \   \text{and} \ \ \  \hat \bbA_k=\bbI-\frac{1}{n}\bbZ_k^{\intercal}\tilde{\Sigma}_1\bbZ_k.$$
   Then we have
 $\bbX=\bbZ^1_N$, $\bbX^0=\bbZ^0_1$, $\bbZ^0_{k+1}=\bbZ^1_k$. It follows that
\begin{eqnarray}\label{1129.4}
&&\frac{1}{\sqrt n} \mathbb{E}\left[\bbw_1^{\intercal}\bbX\bbA^{-1}\bbX^{\intercal}\bbw_1-\bbw_1^{\intercal}\bbX^0\bbA^{-1}(\bbX^0)^{\intercal}\bbw_1\right]\\
&&=\frac{1}{\sqrt n}\sum_{k=1}^n\mathbb{E}\left[\bbw_1^{\intercal}\bbZ^1_k(\hat \bbA^1_k)^{-1}(\bbZ^1_k)^{\intercal}\bbw_1-\bbw_1^{\intercal}\bbZ^0_k(\hat \bbA^0_k)^{-1}(\bbZ^0_k)^{\intercal}\bbw_1\right]\non
&&=\frac{1}{\sqrt n}\sum_{k=1}^n\mathbb{E}\left[\bbw_1^{\intercal}\bbZ^1_k(\hat \bbA^1_k)^{-1}(\bbZ^1_k)^{\intercal}\bbw_1-\bbw_1^{\intercal}\bbZ_k\hat \bbA_k^{-1}\bbZ_k^{\intercal}\bbw_1\right]\non
&&+\frac{1}{\sqrt n}\sum_{k=1}^n\mathbb{E}\left[\bbw_1^{\intercal}\bbZ_k\hat \bbA_k^{-1}\bbZ_k^{\intercal}\bbw_1-\bbw_1^{\intercal}\bbZ^0_k(\hat \bbA^0_k)^{-1}(\bbZ^0_k)^{\intercal}\bbw_1\right].\nonumber
\end{eqnarray}
For any $k$, similar to the expansions from (\ref{0314.5})-(\ref{1102.1}), we can get
\begin{eqnarray}\label{1129.5}
&&\mathbb{E}\left[\bbw_1^{\intercal}\bbZ^1_k(\hat \bbA^1_k)^{-1}(\bbZ^1_k)^{\intercal}\bbw_1-\bbw_1^{\intercal}\bbZ_k\hat \bbA_k^{-1}\bbZ_k^{\intercal}\bbw_1\right]\\
&&=\mathbb{E}\left[\frac{(\bbw_1\bbx_k)^2}{\hat a_k}+\frac{2\bbw_1^{\intercal}\bbZ_k\hat\bbA_k^{-1}\bbZ_k^{\intercal}\tilde{\Sigma}_1\bbx_k\bbx^{\intercal}_k\bbw_1}{n\hat a_k}+\frac{\bbx_k^{\intercal}\hat\bbB_k\bbx_k}{n^2\hat a_k}\right],\nonumber
\end{eqnarray}
where $\hat \bbB_k=\tilde{\Sigma}_1\bbZ_k\hat \bbA_k^{-1}\bbZ_k^{\intercal}\bbw_1\bbw_1^{\intercal}\bbZ_k\hat \bbA_k^{-1}\bbZ_k^{\intercal}\tilde{\Sigma}_1$ and $\hat a_k=1-\frac{1}{n^2}\bbx_k^{\intercal}\tilde{\Sigma}_1\bbZ_k\hat \bbA_k^{-1}\bbZ_k^{\intercal}\tilde{\Sigma}_1\bbx_k$. Let $\bar a_k=1-\frac{1}{n^2}tr\tilde{\Sigma}_1\bbZ_k\hat \bbA_k^{-1}\bbZ_k^{\intercal}\tilde{\Sigma}_1$, $\tau_k=\hat a_k-\bar a_k$. Then we have
\begin{eqnarray}\label{1129.6}
\frac{1}{\hat a_k}=\frac{1}{\bar a_k}-\frac{\tau_k}{\hat a_k\bar a_k}.
\end{eqnarray}
By Lemma 8.10 of \cite{BS06}, we conclude that
\begin{eqnarray}\label{1129.7}
&&\mathbb{E}|\tau_k|^2=\mathbb{E}|\frac{1}{n^2}\bbx_k^{\intercal}\tilde{\Sigma}_1\bbZ_k\hat \bbA_k^{-1}\bbZ_k^{\intercal}\tilde{\Sigma}_1\bbx_k-\frac{1}{n^2}tr\tilde{\Sigma}_1\bbZ_k\hat \bbA_k^{-1}\bbZ_k^{\intercal}\tilde{\Sigma}_1|^2\\
&&\le \frac{C}{n^4}tr(\tilde{\Sigma}_1\bbZ_k\hat \bbA_k^{-1}\bbZ_k^{\intercal}\tilde{\Sigma}_1)^2=O(\frac{d^2}{p}).\nonumber
\end{eqnarray}
Consider the first term at the right hand side of (\ref{1129.5}). It follows from (\ref{1129.6}), (\ref{1129.7}) and Holder's inequality  that
\begin{equation}\label{1129.8}
|\mathbb{E}(\frac{(\bbw_1\bbx_k)^2}{\hat a_k}-\frac{(\bbw_1\bbx_k)^2}{\bar a_k})|=|\mathbb{E}\frac{(\bbw_1\bbx_k)^2\tau_k}{\hat a_k\bar a_k}|
\le C\sqrt{\mathbb{E}(\bbw_1\bbx_k)^4}\sqrt{\mathbb{E}\tau_k^2}=O(\frac{d}{\sqrt p}).
\end{equation}
Thus we conclude that
$$\mathbb{E}\frac{(\bbw_1\bbx_k)^2}{\hat a_k}=\mathbb{E}\frac{(\bbw_1\bbx_k)^2}{\bar a_k}+O(\frac{d}{\sqrt p})=\mathbb{E}\frac{1}{\bar a_k}+o(\frac{1}{\sqrt n}).$$
Moreover, a similar approach can be applied to the other terms at the right hand side of (\ref{1129.5})  and thus we have
\begin{eqnarray}\label{1129.9}
&&\frac{1}{\sqrt n}\sum_{k=1}^n\mathbb{E}\left[\bbw_1^{\intercal}\bbZ^1_k(\hat \bbA^1_k)^{-1}(\bbZ^1_k)^{\intercal}\bbw_1-\bbw_1^{\intercal}\bbZ_k\hat \bbA_k^{-1}\bbZ_k^{\intercal}\bbw_1\right]\\
&&=\frac{1}{\sqrt n}\sum_{k=1}^n\mathbb{E}\left[\frac{1}{\bar a_k}+\frac{2\bbw_1^{\intercal}\bbZ_k\hat\bbA_k^{-1}\bbZ_k^{\intercal}\tilde{\Sigma}_1\bbw_1}{n\bar a_k}+\frac{tr\hat\bbB_k}{n^2\bar a_k}\right]+o(1).\nonumber
\end{eqnarray}
By the same arguments above, we can also get
\begin{eqnarray}\label{1129.10}
&&\frac{1}{\sqrt n}\sum_{k=1}^n\mathbb{E}\left[\bbw_1^{\intercal}\bbZ_k\hat \bbA_k^{-1}\bbZ_k^{\intercal}\bbw_1-\bbw_1^{\intercal}\bbZ^0_k(\hat \bbA^0_k)^{-1}(\bbZ^0_k)^{\intercal}\bbw_1\right]\\
&&=-\frac{1}{\sqrt n}\sum_{k=1}^n\mathbb{E}\left[\frac{1}{\bar a_k}+\frac{2\bbw_1^{\intercal}\bbZ_k\hat\bbA_k^{-1}\bbZ_k^{\intercal}\tilde{\Sigma}_1\bbw_1}{n\bar a_k}+\frac{tr\hat\bbB_k}{n^2\bar a_k}\right]+o(1).\nonumber
\end{eqnarray}
Combining (\ref{1129.4}), (\ref{1129.9}) and (\ref{1129.10}), the equation (\ref{1129.13}) holds.

We next prove (\ref{1129.12}).
To simplify notation, we use $\bbX$ for $\bbX^0$ and hence assume that $\bbX$ follows standard normal distribution. By $\bbw_1^{\intercal}\bbU_2^{\intercal}=0$,  we conclude that $\bbw_1^{\intercal}\bbX$ is independent of $\bbA$ and hence $\frac{1}{n}\mathbb{E} \bbw_1^{\intercal}\bbX\bbA^{-1}\bbX^{\intercal}\bbw_1=\frac{1}{n}\mathbb{E}tr\bbA^{-1}$. By (6.2.4) of \cite{BS06}(or Lemma 3.1 of \cite{BPWZ2014b}), we have
$$\frac{1}{n}\mathbb{E}tr\bbA^{-1}=\mathbb{E}\frac{1}{1+\bbr_1^{\intercal}\underline{\bbA}^{-1}_1\bbr_1},$$
where we denote $\underline{\bbA}=\tilde{\Sigma}_1^{1/2}\bbX\bbX^{\intercal}\tilde{\Sigma}_1^{1/2}-\bbI$, $\bbr_i=\frac{1}{\sqrt N}\tilde{\Sigma}_1^{1/2}\bbx_i$ and $\underline{\bbA}_j=\sum_{i\neq j}\bbr_i\bbr_i^{\intercal}-\bbI$. By Lemma 8.10 of \cite{BS06}, we have
\begin{eqnarray}\label{1128.1}
\mathbb{E}|\bbr_1^{\intercal}\underline{\bbA}^{-1}_1\bbr_1-\frac{1}{\theta N}tr\underline{\bbA}^{-1}_1\Sigma_1|\le \frac{C}{n^2}tr\tilde{\Sigma}_1^2=o(M^{-1}),
\end{eqnarray}
which concludes that
$\mathbb{E}\frac{1}{1+\bbr_1^{\intercal}\underline{\bbA}^{-1}_1\bbr_1}=\mathbb{E}\frac{1}{1+\frac{1}{\theta N}tr\underline{\bbA}^{-1}_1\Sigma_1}+o(n^{-1/2}).$
Moreover,
\begin{eqnarray}\label{1128.2}
\mathbb{E}|\frac{1}{1+\frac{1}{\theta N}tr\underline{\bbA}^{-1}_1\Sigma_1}-\frac{1}{1+\frac{1}{\theta N}\mathbb{E}tr\underline{\bbA}^{-1}_1\Sigma_1}|^2
&\le& \frac{C}{n^2}\mathbb{E}|tr\underline{\bbA}^{-1}_1\Sigma_1-\mathbb{E}tr\underline{\bbA}^{-1}_1\Sigma_1|^2\non
&\le& \frac{C}{n}\mathbb{E}|\beta_{12}\bbr_2^{\intercal}\underline{\bbA}^{-2}_{12}\bbr_2|^2=o(n^{-1}).
\end{eqnarray}
Hence $\mathbb{E}\frac{1}{1+\frac{1}{\theta N}tr\underline{\bbA}^{-1}_1\Sigma_1}=\frac{1}{1+\frac{1}{\theta N}\mathbb{E}tr\underline{\bbA}^{-1}_1\Sigma_1}+o(n^{-1/2}).$
Define $\beta_i=\frac{1}{1+\bbr_i^{\intercal}\underline{\bbA}_i^{-1}\bbr_i}$, $b_i=\frac{1}{1+\frac{1}{n\theta}\mathbb{E}tr\Sigma_1\underline{\bbA}_i^{-1}}$, and $\alpha_i=\bbr_i^{\intercal}\underline{\bbA}_i^{-1}\bbr_i-\frac{1}{n\theta}tr\Sigma_1\underline{\bbA}_i^{-1}$. By the equality that
\[
\underline{\bbA}_1+\bbI-b(\theta)\tilde \Sigma_1=\sum_{i\neq 1}\bbr_i\bbr_i^{\intercal}-b(\theta)\tilde \Sigma_1,
\]
we have
\begin{eqnarray}\label{0330.2}
\underline{\bbA}^{-1}_1=-(\bbI-b_1(\theta)\tilde \Sigma_1)^{-1}+b_1(z)A(\theta)+B(\theta)+C(\theta),
\end{eqnarray}
where
$$A(\theta)=\sum_{i\neq 1}(\bbI-b_1(\theta)\tilde \Sigma_1)^{-1}(\bbr_i\bbr_i^{\intercal}-\frac{1}{n\theta}\Sigma_1)\underline{\bbA}_i^{-1},$$
$$B(\theta)=\sum_{i\neq 1}(\beta_i-b_1)(\bbI-b_1(\theta)\tilde \Sigma_1)^{-1}\bbr_i\bbr_i^{\intercal}\underline{\bbA}_i^{-1},$$
$$C(\theta)=n^{-1}b_1(\bbI-b_1(\theta)\Sigma_1)^{-1}\tilde \Sigma_1\sum_{i\neq 1}(\underline{\bbA}^{-1}_1-\underline{\bbA}_{1i}^{-1}).$$
For $A(\theta)$, similar to (\ref{1128.1}) we have
\begin{equation}\label{1128.3}\small
\frac{1}{n}\mathbb{E}|trA(\theta)\tilde \Sigma_1|\le \frac{1}{n}\sum_{i\neq 2}\mathbb{E}|\bbr_i^{\intercal}\underline{\bbA}_i^{-1}\tilde \Sigma_1(\bbI-b_1(\theta)\tilde \Sigma_1)^{-1}\bbr_i-\frac{1}{n\theta}tr(\Sigma_1\underline{\bbA}_i^{-1}\tilde \Sigma_1(\bbI-b_1(\theta)\tilde \Sigma_1)^{-1})|=o(M^{-1}).
\end{equation}
Similar to the previous inequalities (\ref{1128.1})-(\ref{1128.2}) or as in Chapter 9 of \cite{BS06},  we can also show that $B(\theta)$ and $C(\theta)$ are negligible. Hence we get
\begin{equation}\label{0331.1}
\frac{1}{n}\mathbb{E}tr\underline{\bbA}^{-1}_1\tilde \Sigma_1=-\frac{1}{n}tr(\bbI-b_1(\theta)\tilde \Sigma_1)^{-1}\tilde \Sigma_1+o(n^{-1/2}),
\end{equation}
which implies that
\begin{eqnarray}\label{0901.1}
\frac{1}{n}\mathbb{E}tr\bbA^{-1}=\frac{1}{1-\frac{1}{n}tr(\bbI-\frac{1}{n}(\mathbb{E}tr\bbA^{-1})\tilde \Sigma_1)^{-1}\tilde \Sigma_1}+o(n^{-1/2}),
\end{eqnarray}
By the Steiltjes transform of the limit of the ESD of any sample covariance matrix, there exists only one $\tilde m_{\theta}(z)$ such that (One can also refer to (1.6) of  \cite{BPWZ2014b} or (6.12)-(6.15) of \cite{BS06})
\begin{eqnarray}\label{0331.2}
\tilde m_{\theta}(z)=-\frac{1}{z-\frac{1}{n}tr(\bbI+\tilde m_{\theta}(z)\tilde \Sigma_1)^{-1}\tilde \Sigma_1}, \ \ z\in \mathbb{C}^+.
\end{eqnarray}
Consider the difference between (\ref{0901.1})-(\ref{0331.2}) and denote $\delta=\frac{1}{n}\mathbb{E}tr\bbA^{-1}+\tilde m_{\theta}(1)$. It is easy to conclude that
$$\delta(1+\frac{\frac{1}{n}tr\left[(\bbI-\frac{1}{n}(\mathbb{E}tr\bbA^{-1})\tilde \Sigma_1)^{-1}\tilde \Sigma_1(\bbI+\tilde m_{\theta}(1)\tilde \Sigma_1)^{-1}\tilde \Sigma_1\right]}{(1-\frac{1}{n}tr(\bbI-\frac{1}{n}(\mathbb{E}tr\bbA^{-1})\tilde \Sigma_1)^{-1}\tilde \Sigma_1)(1-\frac{1}{n}tr(\bbI+\tilde m_{\theta}(1)\tilde \Sigma_1)^{-1}\tilde \Sigma_1)})=o(n^{-1/2}).$$
Together with the fact that $\|\tilde \Sigma_1\|=O(\theta^{-1})$, it follows that $\delta=o(1/\sqrt n)$. Therefore, we have shown that
\begin{eqnarray}\label{0916.3}
\sqrt n(\frac{1}{n}\mathbb{E}tr\bbA^{-1}+\tilde m_{\theta}(1))\rightarrow 0. \qed
\end{eqnarray}

\newpage
\title{\textbf{Supplement to ``Limiting Laws for Divergent Spiked Eigenvalues and Largest Non-spiked Eigenvalue of Sample Covariance Matrices''}}

This note summarizes the supplementary materials to the paper ``Limiting Laws for Divergent Spiked Eigenvalues and Largest Non-spiked Eigenvalue of Sample Covariance Matrices''.
We first briefly discuss the quantities  $\gamma_+$ and $\sigma_n$ defined in Section \ref{sec5h} and then provide detailed proofs of  the main theorems and some technical results given in the paper. More specifically, we prove in detail here Theorems \ref{0318-1},  \ref{0408-2}, \ref{0831-1}, \ref{0503-1}, \ref{0606-2}, Lemma  \ref{1101-1x} and Corollary \ref{1130-1}.

\section{Discussion on $\gamma_+$ and $\sigma_n$}

 Below we discuss the unknown parameters $\gamma_+$ and $\sigma_n$. In order to find an upper bound of $\lambda_{K+1}$, by (\ref{0216.5}), a key step is to estimate $\sigma_n$ and $\gamma_+$. By (3) and (11) of \cite{K2007}, we have
$$\sigma_n=(\frac{1}{2}\frac{\partial^3 f(z)}{\partial z^3}|_{z\rightarrow \bbd})^{1/3},$$
where
$$f(z)=-\gamma_+ z+\log(z)-\frac{p-K}{n}\int\log(1-z\lambda)dF_{\bold\Lambda_P}(\lambda)+C, \ C \ \text{is a constant.}$$
It is straightforward to get
\begin{eqnarray}\label{0604.4}
\frac{\partial f(s)}{\partial s}=-\gamma_++\frac{1}{s}+\frac{p-K}{n}\int \frac{\lambda dF_{\bold\Lambda_P}(\lambda)}{1-\lambda s}.
\end{eqnarray}
Let $t=-m_{\Sigma_1}(z)$. Then by the equality that
$$z=-\frac{1}{t}+\frac{p-K}{n}\int \frac{\lambda dF_{\bold\Lambda_P}(\lambda)}{1+\lambda t},$$
we have $\frac{\partial f(t)}{\partial t}=-\gamma_+-z$. Therefore, $\frac{\partial^3 f(t)}{\partial t^3}=-\frac{\partial^2 z}{\partial t^2}$. Recall the definition of $t$,  $$t=-m_{\Sigma_1}(z)=-\int \frac{dF_0(x)}{x-z},$$
where $F_0(x)$ is the c.d.f. determined by $m_{\Sigma_1}(z)$. We have the following two equations:
\begin{equation}\label{0604.5}
1=-\frac{\partial z}{\partial t}\int \frac{dF_0(x)}{(x-z)^2}, \ \ \  0=-\frac{\partial^2 z}{\partial t^2}\int \frac{dF_0(x)}{(x-z)^2}+2(\frac{\partial z}{\partial t})^2\int \frac{dF_0(x)}{(x-z)^3}.
\end{equation}
It follows from (\ref{0604.3}), (\ref{0604.4})-(\ref{0604.5}) that
\begin{eqnarray}\label{0604.6h}
\sigma_n=(-\lim_{z\rightarrow \gamma_+^+}\frac{\int \frac{dF_0(x)}{(x-z)^3}}{(\int \frac{dF_0(x)}{(x-z)^2})^3})^{1/3}
\end{eqnarray}
 By the singular value inequality or interlacing inequality, we have
$$\lambda_{n^{1/6}}\ge \nu_{n^{1/6}+K}.$$
By Theorem 3.14 of \cite{KY14}, we have
$$|\nu_{n^{1/6}+K}-\gamma_{n^{1/6}+K}|\le n^{-2/3},$$
with high probability, where
$$\frac{i}{n}=\int_{\gamma_{i}}^{\gamma_+} dF_0(x).$$
By Lemmas 2.3 and 2.5 of \cite{BPZ2014a}, we have $\frac{dF_0(x)}{dx}\sim \sqrt{\gamma_+-x}$, then
$$\gamma_+-\nu_{1/6+K}\sim n^{-5/9}.$$
Therefore
$\gamma_+-\nu_{n^{1/6}+K}\le \frac{\log n}{2} \times n^{-5/9}$ with high probability. Therefore, together with Theorem \ref{0606-2}, with high probability
\begin{eqnarray}\label{h0604.7h}
\lambda_{K+1}\le \lambda_{n^{1/6}}+ \log n \times n^{-5/9}.
\end{eqnarray}
 \section{Proof of Theorem \ref{0318-1}}

Below, we consider $i=1,...,K$. Note that the non-zero eigenvalues of $\Gamma\bbX\bbX^T\Gamma^T$ are equal to those of $\bbU\bbX\bbX^T\bbU^T\Lambda$. By Weyl's inequality, we have
$$|\sigma_i(\Lambda^{1/2}\bbU\bbX)-\sigma_i(\left(
 \begin{matrix}
   \Lambda_S^{1/2} & 0\\
  0 &0
  \end{matrix}
  \right)\bbU\bbX)|\le \|\left(
 \begin{matrix}
   0 & 0\\
  0 &\Lambda_P^{1/2}
  \end{matrix}
  \right)\bbU\bbX\|,$$
where $\sigma_i(\bbA)$ is the $i$-th largest singular value of $\bbA$. By Theorem 1 of \cite{BG2012}, under Assumption \ref{0829-1}(ii), with probability tending to 1, we have $\|\frac{1}{n}\bbU_2\bbX\bbX^T\bbU_2^T\Lambda_P\|\le \|\frac{1}{n}\bbU_2\bbX\bbX^T\bbU_2^T\|\|\Lambda_P\|\le \|\frac{1}{n}\bbX\bbX^T\|\|\Lambda_P\|\le\frac{2Cp}{n}$. Define $\bbB=\left(
 \begin{matrix}
   \Lambda_S & 0\\
  0 &0
  \end{matrix}
  \right)$. By assumption 3, we have \begin{eqnarray}\label{0318.2}
\frac{\lambda_i(\frac{1}{n}\bbU\bbX\bbX^T\bbU^T\Lambda)-\lambda_i(\frac{1}{n}\bbU\bbX\bbX^T\bbU^T\bbB)}{\mu_i}=O_p(d_i).
\end{eqnarray} Moreover, it is easy to see that the non-zero eigenvalues of $\lambda_i(\frac{1}{n}\bbU\bbX\bbX^T\bbU^T\bbB)$ are the same as those of the  $K\times K$ block $\bbC=\frac{1}{n}\Lambda^{1/2}_S\bbU_1\bbX\bbX^T\bbU^T_1\Lambda^{1/2}_S$, where $\bbU_1$ is the first $K$ rows of $\bbU$. By Theorem 7.1 of \cite{BY08} and Chebyshev's inequality, we can show that $\|\frac{1}{n}\bbU_1\bbX\bbX^T\bbU^T_1-\bbI_K\|_{\infty}=O_p(\frac{K}{\sqrt{n}})$. Moreover, the determinant for calculating the eigenvalue $\lambda_i(\frac{1}{n}\Lambda^{1/2}_S\bbU_1\bbX\bbX^T\bbU^T_1\Lambda^{1/2}_S)$ is equivalent to
\begin{eqnarray}\label{0324.1}
\det(\frac{1}{n}\bbU_1\bbX\bbX^T\bbU^T_1-\lambda_i(\bbC)\Lambda_S^{-1})=0.
\end{eqnarray}
By the Leibniz's formula for the determinant, it is easy to conclude that $\frac{\lambda_i(\bbC)}{\mu_i}-1=O_p(\frac{K^4}{n})$ uniformly for all $i=1,...,K$. Combining with (\ref{0318.2}), we conclude that $$\frac{\lambda_i(\frac{1}{n}\bbU\bbX\bbX^T\bbU^T\Lambda)-\mu_i}{\mu_i}=O_p(\frac{K^4}{n}+d_i)$$ uniformly for all $i=1,...,K$.

\section{Proof of Theorem \ref{0408-2}}\label{Appc}

\subsection{Outline of The Proof }\label{Appa}

 If $\lambda_i$ is the  spiked  eigenvalue of $\bbS_n$, then by the determinantal equation (\ref{0912.2h}) below we conclude that $\lambda_i$ satisfies the following equation
\begin{equation}\label{out1}
\det(\Lambda^{-1}_{\bbS}-\frac{1}{n}\bbU_1\bbX(\lambda_i\bbI-\frac{1}{n}\bbX^{\intercal}\bbU^{\intercal}_2\Lambda_P\bbU_2\bbX)^{-1}\bbX^{\intercal}\bbU^{\intercal}_1)=0.
\end{equation}
We will prove that the diagonal entries of $\frac{1}{n}\bbU_1\bbX(\lambda_i\bbI-\frac{1}{n}\bbX^{\intercal}\bbU^{\intercal}_2\Lambda_P\bbU_2\bbX)^{-1}\bbX^{\intercal}\bbU^{\intercal}_1$ dominate the determinant above. Roughly speaking, by ignoring the negligible terms we can get the following equation
\begin{equation}\label{out1}
\mu_i^{-1}-\frac{1}{n}\bbu_i^{\intercal}\bbX(\lambda_i\bbI-\frac{1}{n}\bbX^{\intercal}\bbU^{\intercal}_2\Lambda_P\bbU_2\bbX)^{-1}\bbX^{\intercal}\bbu_i=0.
\end{equation}
We can further get
$$\mu_i^{-1}-\frac{1}{n}\bbu_i^{\intercal}\bbX(\theta_i\bbI-\frac{1}{n}\bbX^{\intercal}\bbU^{\intercal}_2\Lambda_P\bbU_2\bbX)^{-1}\bbX^{\intercal}\bbu_i
\approx(\lambda_i-\theta_i)\frac{1}{n}\bbu_i^{\intercal}\bbX(\theta_i\bbI-\frac{1}{n}\bbX^{\intercal}\bbU^{\intercal}_2\Lambda_P\bbU_2\bbX)^{-2}\bbX^{\intercal}\bbu_i.$$
Therefore the CLT of $(\lambda_i-\theta_i)$ is determined by the asymptotic distribution of $\frac{1}{n}\bbu_i^{\intercal}\bbX(\theta_i\bbI-\frac{1}{n}\bbX^{\intercal}\bbU^{\intercal}_2\Lambda_P\bbU_2\bbX)^{-1}\bbX^{\intercal}\bbu_i$. Therefore we need to establish CLT of the random quadratic forms in Theorem \ref{1101-1}. Similarly, the correlation of $\lambda_i$ and $\lambda_j$ are also determined by $\frac{1}{n}\bbu_i^{\intercal}\bbX(\theta_i\bbI-\frac{1}{n}\bbX^{\intercal}\bbU^{\intercal}_2\Lambda_P\bbU_2\bbX)^{-1}\bbX^{\intercal}\bbu_i$ and $\frac{1}{n}\bbu_j^{\intercal}\bbX(\theta_j\bbI-\frac{1}{n}\bbX^{\intercal}\bbU^{\intercal}_2\Lambda_P\bbU_2\bbX)^{-1}\bbX^{\intercal}\bbu_j$.


\section*{Proof of Theorem \ref{0408-2} under Assumption \ref{0829-1b}}
This section is to prove a weaker version of Theorem \ref{0408-2} first. i.e. We assume that Assumption \ref{0829-1b} holds instead of Assumption \ref{0829-1a}. Assumption \ref{0829-1b} is then removed at Section \ref{0914-1} in the supplementary. Define $\bbB(x)=x\bbI-\frac{1}{n}\bbX^{\intercal}\bbU^{\intercal}_2\Lambda_P\bbU_2\bbX$.

First of all, we prove CLT for  a fixed i, $i\in \{1,...,K\}$. By the definition of $\lambda_i$, it solves the equation
$$\det(\lambda_i\bbI-\frac{1}{n}\Lambda^{1/2}\bbU\bbX\bbX^{\intercal}\bbU^{\intercal}\Lambda^{1/2})=0.$$
By the simple fact that $\det(\bbI-\bbC\bbD)=\det(\bbI-\bbD\bbC)$, we have
\begin{eqnarray}\label{1101.7}
\det(\lambda_i\bbI-\frac{1}{n}\bbX^{\intercal}\bbU^{\intercal}\Lambda\bbU\bbX)=0.
\end{eqnarray}
Recalling the notations above Assumption \ref{0829-1a}, (\ref{1101.7}) is equivalent to
\begin{eqnarray}\label{1101.8}
\det(\lambda_i\bbI-\frac{1}{n}\bbX^{\intercal}\bbU^{\intercal}_2\Lambda_P\bbU_2\bbX-\frac{1}{n}\bbX^{\intercal}\bbU^{\intercal}_1\Lambda_S\bbU_1\bbX)=0.
\end{eqnarray}
By Theorem \ref{0318-1}, $\lambda_i\bbI-\frac{1}{n}\bbX^{\intercal}\bbU^{\intercal}_2\Lambda_P\bbU_2\bbX$ is invertible with probability tending to 1. Hence with probability tending to 1, (\ref{1101.8}) is equivalent to
\begin{eqnarray}\label{1101.9}
\det(\bbI-\frac{1}{n}\bbX^{\intercal}\bbU^{\intercal}_1\Lambda_S\bbU_1\bbX(\lambda_i\bbI-\frac{1}{n}\bbX^{\intercal}\bbU^{\intercal}_2\Lambda_P\bbU_2\bbX)^{-1})=0.
\end{eqnarray}
Therefore, $\lambda_i$ satisfies the following equation
\begin{equation}\label{0604.1}
\det(\bbI-\frac{1}{n}\Lambda^{1/2}_S\bbU_1\bbX\bbB^{-1}(\lambda_i)\bbX^{\intercal}\bbU^{\intercal}_1\Lambda^{1/2}_S)=0.
\end{equation}
i.e.
\begin{equation}\label{0912.2h}
\det(\Lambda^{-1}_{\bbS}-\frac{1}{n}\bbU_1\bbX\bbB^{-1}(\lambda_i)\bbX^{\intercal}\bbU^{\intercal}_1)=0.
\end{equation}

 Recalling (\ref{1101.6}), we have
$$\tilde m_{\theta_i}(1)+\frac{\theta_i}{\mu_i}=0.$$
Since $\tilde m_{\theta_i}(x)$ is a increasing function of $x$ for $x\ge 1/2$($x\ge 1/2$ is outside the spectrum of  $\tilde m_{\theta_i}(x)$)  and $\|\frac{\bold\Sigma_1}{\mu_i}\|= O_p(d_i)$, we conclude that $\theta_i=\mu_i(1+O(d_i))$. We denote $\frac{\lambda_i-\theta_i}{\theta_i}$ by $\delta_i$. For convenience, we only prove the central limit theorem for $\lambda_1$ and the other eigenvalues can be handled similarly.
First of all, we have
\begin{equation}\label{0318.3}
\bbU_1\bbX\bbB^{-1}(\lambda_1)\bbX^{\intercal}\bbU^{\intercal}_1=\bbU_1\bbX\bbB^{-1}(\theta_1)\bbX^{\intercal}\bbU^{\intercal}_1-\delta_1\theta_1\bbU_1\bbX\bbB^{-1}(\lambda_1)\bbB^{-1}(\theta_1)\bbX^{\intercal}\bbU^{\intercal}_1.
\end{equation}
Hence (\ref{0912.2h}) can be rewritten as
\begin{equation}\label{1101.9}
\det(\theta_1\Lambda^{-1}_{\bbS}-\frac{\theta_1}{n}\bbU_1\bbX\bbB^{-1}(\theta_1)\bbX^{\intercal}\bbU^{\intercal}_1+\frac{\delta_1\theta_1^2}{n}\bbU_1\bbX\bbB^{-1}(\lambda_1)\bbB^{-1}(\theta_1)\bbX^{\intercal}\bbU^{\intercal}_1)=0.
\end{equation}
  To illustrate the main idea of our proof, we give a simple example. Suppose $K=2$ and we have shown that
$$\theta_1\Lambda^{-1}_{\bbS}-\frac{\theta_1}{n}\bbU_1\bbX\bbB^{-1}(\theta_1)\bbX^{\intercal}\bbU^{\intercal}_1=\left(
 \begin{matrix}
   \hat S_n & O_p(\frac{1}{\sqrt n})  \\
    O_p(\frac{1}{\sqrt n}) & 1+o_p(1)
  \end{matrix}
  \right)$$
and
$$\frac{\theta_1^2}{n}\bbU_1\bbX\bbB^{-1}(\lambda_1)\bbB^{-1}(\theta_1)\bbX^{\intercal}\bbU^{\intercal}_1=-\left(
 \begin{matrix}
   1+o_p(1) & o_p(1)  \\
    o_p(1)  & 1+o_p(1)
  \end{matrix}
  \right),$$
  where $\sqrt n\hat S_n\stackrel{D}{\rightarrow} N\left(0, 1\right)$. Then (\ref{1101.9}) becomes
  $$\det\left(
 \begin{matrix}
  \hat S_n+\delta_1(1+o_p(1)) & O_p(\frac{1}{\sqrt n})+o_p(\delta_1)  \\
    O_p(\frac{1}{\sqrt n})+o_p(\delta_1)  & 1+o_p(1)+\delta_1(1+o_p(1))
  \end{matrix}
  \right)=0.$$
  By Leibniz's formula for the determinant of a matrix, we have
  $$\delta_1(1+o_p(1))+\hat S_n(1+o_p(1))+o_p(\frac{1}{\sqrt n})=0,$$
  which implies that $\sqrt n\delta_1=\sqrt n\hat S_n+o_p(1)\stackrel{D}{\rightarrow} N\left(0, 1\right)$.

By the example above, similar to the proof of Theorem 3.1 in \cite{BY08}, the key steps are to establish the central limit theorem for the entries of  $\frac{1}{\sqrt n}\theta_1\bbU_1\bbX\bbB^{-1}(\theta_1)\bbX^{\intercal}\bbU^{\intercal}_1$  and the entry wise limit of $\frac{\theta_1^2}{ n}\bbU_1\bbX\bbB^{-2}(\theta_1)\bbX^{\intercal}\bbU^{\intercal}_1$ by Leibniz's formula for the determinant of a matrix.

Let $\bbu_i^{\intercal}$ be the i-th row of $\bbU_1$. By Theorem \ref{1101-1}, we have
\begin{eqnarray}\label{1101.10}
\sqrt n(\frac{\theta_1}{n}\bbu_i^{\intercal}\bbX\bbB^{-1}(\theta_1)\bbX^{\intercal}\bbu_i+\tilde m_{\theta_1}(1))\stackrel{D}{\rightarrow} N\left(0, \sigma^2_{i}\right)
\end{eqnarray}
 and
 $$\frac{1}{\sqrt n}\theta_1\bbu_i^{\intercal}\bbX\bbB^{-1}(\theta_1)\bbX^{\intercal}\bbu_j\stackrel{D}{\rightarrow} N\left(0, \sigma_{ij}+1\right), \ i\neq j,$$
 where $\sigma_i$ and $\sigma_{ij}$ are defined above (\ref{h0408.4}). By Chebyshev's inequality and the proof of Theorem \ref{1101-1} we have
 \begin{eqnarray}\label{1128.4}
 &&\mathbb{P}(\max_{1\le i,j\le k}|\frac{\theta_1}{n}\bbu_i^{\intercal}\bbX\bbB^{-1}(\theta_1)\bbX^{\intercal}\bbu_j+\delta_{ij}\tilde m_{\theta_1}(1)|\ge \frac{\ep}{\sqrt n})\non
 &&\le \sum_{1\le i,j\le k} \mathbb{P}(|\frac{\theta_1}{n}\bbu_i^{\intercal}\bbX\bbB^{-1}(\theta_1)\bbX^{\intercal}\bbu_j+\delta_{ij}\tilde m_{\theta_1}(1)|\ge \frac{\ep}{\sqrt n})\non
 &&\le \sum_{1\le i,j\le k} \frac{N\mathbb{E}|\frac{\theta_1}{n}\bbu_i^{\intercal}\bbX\bbB^{-1}(\theta_1)\bbX^{\intercal}\bbu_j+\delta_{ij}\tilde m_{\theta_1}(1)|^2}{t^2}=O(\frac{K^2}{\ep^2}),
 \end{eqnarray}
 which implies that $\max_{1\le i,j\le k}|\frac{\theta_1}{n}\bbu_i^{\intercal}\bbX\bbB^{-1}(\theta_1)\bbX^{\intercal}\bbu_j+\delta_{ij}\tilde m_{\theta_1}(1)|=O_p(\frac{K}{\sqrt n})$.
 It follows that
\begin{eqnarray}\label{1128.8}
&&\theta_1\Lambda^{-1}_{\bbS}-\frac{\theta_1}{n}\bbU_1\bbX\bbB^{-1}(\theta_1)\bbX^{\intercal}\bbU^{\intercal}_1\\
&&=\left[
 \begin{matrix}
  \hat S_n & O_p(\frac{K}{\sqrt n}) & ..& ..& O_p(\frac{K}{\sqrt n}) \\
    O_p(\frac{K}{\sqrt n}) & O_p(1) & .. & ..&  O_p(\frac{K}{\sqrt n})\\
   . & ... & .. & & .\\
   . & ... & O_p(1)   & &O_p(\frac{K}{\sqrt n})\\
   O_p(\frac{K}{\sqrt n}) & ... & O_p(\frac{K}{\sqrt n})    & &O_p(1)
  \end{matrix}
  \right],\nonumber
  \end{eqnarray}
  where $\hat S_n=\frac{\theta_1}{n}\bbu_1^{\intercal}\bbX\bbB^{-1}(\theta_1)\bbX^{\intercal}\bbu_1+\tilde m_{\theta_1}(1)$.
Moreover, we claim that there exists $\delta_n\rightarrow 0$ such that
  \begin{eqnarray}\label{0916.1}
&&\|\frac{\theta_1^2}{n}\bbU_1\bbX\bbB^{-2}(\theta_1)\bbX^{\intercal}\bbU^{\intercal}_1+(1+\delta_n)\tilde m_{\theta_1}(1)\bbI\|_{\infty}=O_p(\frac{K}{\sqrt n}),
\end{eqnarray}
whose proof is given in section \ref{2.10}.
By Theorem \ref{0318-1} and (\ref{1101.6}) we have
\begin{eqnarray}\label{1128.5}
&&\|\frac{\theta_1^2}{n}\bbU_1\bbX\bbB^{-1}(\lambda_1)\bbB^{-1}(\theta_1)\bbX^{\intercal}\bbU^{\intercal}_1-\frac{\theta_1^2}{n}\bbU_1\bbX\bbB^{-2}(\theta_1)\bbX^{\intercal}\bbU^{\intercal}_1\|_{\infty}\non
&&=\delta_1\|\frac{\theta_1^3}{n^2}\bbU_1\bbX\bbB^{-1}(\lambda_1)\bbX^{\intercal}\bold\Sigma_1\bbX\bbB^{-2}(\theta_1)\bbX^{\intercal}\bbU^{\intercal}_1\|_{\infty}=O_p(\frac{K^4}{n}+d_1),
\end{eqnarray}
which, together with (\ref{0916.1}), implies that
$$\|\frac{\theta_1^2}{n}\bbU_1\bbX\bbB^{-1}(\lambda_1)\bbB^{-1}(\theta_1)\bbX^{\intercal}\bbU^{\intercal}_1+(1+\delta_n)\tilde m_{\theta_1}(1)\bbI\|_{\infty}=O_p(\frac{K^2}{\sqrt n}+\frac{K^4}{n}+d_1).$$
By Leibniz formula for determinant and a tedious calculation, one can show that
 $$\delta_1(1+O_p(K^2d_1+\frac{K^6}{n}))+\hat S_n(1+o_p(1))+o_p(\frac{1}{\sqrt n})=0.$$
By (\ref{1101.10}) we have shown that
$$\sqrt n\delta_1\stackrel{D}{\rightarrow} N\left(0, \sigma^2_{1}\right),$$
and the proof of this section is complete.
\subsubsection{Proof of (\ref{0916.1})}\label{2.10}
The proof of (\ref{0916.1}) is similar to Section \ref{calculatemean} and we merely give a sketch of the proof. We consider a special entry $\mathbb{E}(\frac{\theta_1^2}{n}\bbu_1^{\intercal}\bbX\bbB^{-2}(\theta_1)\bbX^{\intercal}\bbu_1+(1+\delta_n)\tilde m_{\theta_1}(1))^2$ of (\ref{0916.1}) as an example.
First of all, as in (\ref{0314.1})- (\ref{0411.1}), one can show that
$\mathbb{E}|\frac{\theta_1^2}{n}\bbu_1^{\intercal}\bbX\bbB^{-2}(\theta_1)\bbX^{\intercal}\bbu_1-\mathbb{E}\frac{\theta_1^2}{n}\bbu_1^{\intercal}\bbX\bbB^{-2}(\theta_1)\bbX^{\intercal}\bbu_1|^2=O(\frac{1}{n})$. Therefore by Chebyshev's inequality, we have
{\small \begin{equation*}
\frac{1}{n}\|\theta_1^2\bbU_1\bbX\bbB^{-2}(\theta_1)\bbX^{\intercal}\bbU^{\intercal}_1-\mathbb{E}\theta_1^2\bbU_1\bbX\bbB^{-2}(\theta_1)\bbX^{\intercal}\bbU^{\intercal}_1\|_{\infty}=O_p(\frac{K}{\sqrt n}).
\end{equation*}}
Next, by the interpolation method introduced in Section \ref{calculatemean} we can show that
 \begin{eqnarray}\label{0916.2}
 &&\frac{\theta_1^2}{n}\mathbb{E}\bbu_1^{\intercal}\bbX\bbB^{-2}(\theta_1)\bbX^{\intercal}\bbu_1+(1+\delta_n)\tilde m_{\theta_1}(1)\\
 &&=\frac{\theta_1^2}{n}\mathbb{E}\bbu_1^{\intercal}\bbX^0\bbB_0^{-2}(\theta_1)(\bbX^0)^{\intercal}\bbu_1+(1+\delta_n)\tilde m_{\theta_1}(1)+o(\frac{1}{\sqrt n}),\nonumber
 \end{eqnarray}
where $\bbB_0(\theta_1)=\theta\bbI-\frac{1}{n}(\bbX^0)^{\intercal}\bbU^{\intercal}_2\Lambda_P\bbU_2\bbX^0$ and the above equation  implies that
 \begin{eqnarray*}
 &&\|\frac{\theta_1^2}{n}\bbU_1\bbX^1\bbB^{-2}(\theta_1)\bbX^{\intercal}\bbU^{\intercal}_1+(1+\delta_n)\tilde m_{\theta_1}(1)\bbI\|_{\infty}\non
 &&=\|\frac{\theta_1^2}{n}\bbU_1\bbX^0\bbB_0^{-2}(\theta_1)(\bbX^0)^{\intercal}\bbU^{\intercal}_1+(1+\delta_n)\tilde m_{\theta_1}(1)\bbI\|_{\infty}+O_p(\frac{K}{\sqrt n}).
 \end{eqnarray*}
 Moreover, note that
$$\mathbb{E}\frac{\theta_1^2}{n}\bbu_1^{\intercal}\bbX^0\bbB_0^{-2}(\theta_1)(\bbX^0)^{\intercal}\bbu_1+(1+\delta_n)\tilde m_{\theta_1}(1)\bbI=
\frac{\theta_1^2}{n}\mathbb{E}tr[\bbB_0^{-2}(\theta_1)]+(1+\delta_n)\tilde m_{\theta_1}(1)\bbI.$$
 Let $\tilde \nu_i$ be the i-th largest eigenvalue of $\theta_1\bbB_0^{-1}(\theta_1)$. Then we have
 $$\frac{\theta_1^2}{n}\mathbb{E}tr\bbB_0^{-2}(\theta_1)=\frac{1}{n}\mathbb{E}\sum_{i=1}^n\tilde \nu_i^2.$$
 By (\ref{0916.3}) we have $\frac{1}{n}\mathbb{E}\sum_{i=1}^n\tilde \nu_i=-\tilde m_{\theta_1}(1)+o(\frac{1}{\sqrt n})$. Together with the simple fact that $\tilde \nu_i=1+O(\sqrt {d_K})$ with high probability, we conclude that there exists such $\delta_n\rightarrow 0$ such that
 $$\frac{1}{n}\mathbb{E}\sum_{i=1}^n\tilde \nu^2_i=-(1+\delta_n)\tilde m_{\theta_1}(1)+o(\frac{1}{\sqrt n}).$$
Up to now, we have shown that
 $$\mathbb{E}\|\frac{\theta_1^2}{n}\bbU_1\bbX^0\bbB_0^{-2}(\theta_1)(\bbX^0)^{\intercal}\bbU_1^{\intercal}+(1+\delta_n)\tilde m_{\theta_1}(1)\bbI\|_{\infty}=O(\frac{K}{\sqrt n})$$
 and hence
  $$\mathbb{E}\|\frac{\theta_1^2}{n}\bbU_1\bbX\bbB^{-2}(\theta_1)\bbX^{\intercal}\bbU_1^{\intercal}+(1+\delta_n)\tilde m_{\theta_1}(1)\bbI\|_{\infty}=O(\frac{K}{\sqrt n}).$$
\subsubsection{Joint Distribution (\ref{0831.3})}
This section aims at proving the asymptotic joint distribution of the spiked eigenvalues. i.e. (\ref{0831.3}). By the argument leading to (\ref{1127.3}), we conclude that it suffices to consider the asymptotic joint distribution of
\begin{eqnarray}\label{0411.2}
\left(\frac{1}{\sqrt{n}}\sum_{k=1}^n((\bbu_1\bbx_k)^2-1),..., \frac{1}{\sqrt{n}}\sum_{k=1}^n((\bbu_r\bbx_k)^2-1)\right), \ r\ge 2.
\end{eqnarray}
The covariance of the cross term is
\begin{eqnarray}\label{0411.3}
&&\frac{1}{n}\sum_{k=1}^n\mathbb{E}\left[\big((\bbu_i\bbx_k)^2-1\big)\big((\bbu_j\bbx_k)^2-1\big)\right]=\frac{\left(\sum_{s=1}^{p+l}(\gamma_{4s}-3)u_{is}^2u_{js}^2\right)}{n}\non
&&\longrightarrow\lim_{n\rightarrow \infty}\sum_{s=1}^{p+l}(\gamma_{4s}-3)u_{is}^2u_{js}^2=\sigma_{ij}.
\end{eqnarray}

 \section{Proof of Lemma \ref{1101-1x}}
\begin{proof}
Recalling $\bbA=\bbI-\frac{1}{n}\bbX^T\tilde \Sigma_1\bbX$, let $\bbA_{\Upsilon}=\bbI-\frac{1}{n}\Upsilon\bbX^T\tilde \Sigma_1\bbX\Upsilon$ and $\bbA_{(\Upsilon)}=\bbI-\frac{1}{n}\bbX^T\tilde \Sigma_1\bbX\Upsilon$. By Theorem \ref{1101-1}, it suffices to show that
\begin{eqnarray}\label{0223.2}
&&\\
\frac{1}{n}(\bbw_1^T\bbX\bbA^{-1}\bbX^T\bbw_1-\bbw_1^T\bbX\Upsilon\bbA_{\Upsilon}^{-1}\Upsilon\bbX^T\bbw_1)I(F_d)=o_{L_1}(1/\sqrt n)\nonumber,
\end{eqnarray}
\begin{eqnarray}\label{0223.3}
&&\\
\frac{1}{n}(\bbw_1^T\bbX\bbA^{-1}\bbX^T\bbw_2-\bbw_1^T\bbX\Upsilon\bbA_{\Upsilon}^{-1}\Upsilon\bbX^T\bbw_2)I(F_d)=o_{L_1}(1/\sqrt n),\nonumber
\end{eqnarray}
where $\tilde \Sigma_1=\frac{\bold\Sigma_1}{\theta}$.
We prove (\ref{0223.2}) and (\ref{0223.3}) can be shown similarly. In the following proof we  also omit $I(F_d)$ to simplify notation. First of all, we have
\begin{eqnarray}\label{0223.4}
&&\frac{1}{n}\bbw_1^T\bbX\Upsilon\bbA_{\Upsilon}^{-1}\Upsilon\bbX^T\bbw_1=\frac{1}{n}\bbw_1^T\bbX\bbA_{\Upsilon}^{-1}\bbX^T\bbw_1\\
&&-\frac{2}{n^2}\bbw_1^T\bbX\mathbf{1}\mathbf{1}^T\bbA_{\Upsilon}^{-1}\bbX^T\bbw_1+\frac{2}{n^3}\bbw_1^T\bbX\mathbf{1}\mathbf{1}^T\bbA_{\Upsilon}^{-1}\mathbf{1}\mathbf{1}^T\bbX^T\bbw_1.\nonumber
\end{eqnarray}
Let  $\Delta=\frac{1}{n^2}\Upsilon\bbX^T\tilde \Sigma_1\bbX\Upsilon\bbA_{\Upsilon}^{-1}$. It is easy to see that
$$\frac{1}{n}\bbA_{\Upsilon}^{-1}=\frac{1}{n}\bbI+\Delta,$$
and $\|\Delta\|=o(\frac{1}{n})$.
 It follows that
$$\frac{2}{n^2}\bbw_1^T\bbX\mathbf{1}\mathbf{1}^T\bbA_{\Upsilon}^{-1}\bbX^T\bbw_1=\frac{2}{n^2}\bbw_1^T\bbX\mathbf{1}\mathbf{1}^T\bbX^T\bbw_1+\frac{2}{n}\bbw_1^T\bbX\mathbf{1}\mathbf{1}^T\Delta\bbX^T\bbw_1.$$
A direct calculation indicates that
\begin{eqnarray}\label{0123.5}
\frac{2}{n^2}\mathbb{E}|\bbw_1^T\bbX\mathbf{1}\mathbf{1}^T\bbX^T\bbw_1|=\frac{2}{n}.
\end{eqnarray}
Holder's inequality ensures that
\begin{eqnarray}\label{0123.6}
&&\frac{2}{n}\mathbb{E}|\bbw_1^T\bbX\mathbf{1}\mathbf{1}^T\Delta\bbX^T\bbw_1|\le\\ &&\frac{2}{n}\sqrt{\mathbb{E}|\bbw_1^T\bbX\mathbf{1}\mathbf{1}^T\bbX^T\bbw_1|}\sqrt{\mathbb{E}|\bbw_1^T\bbX\Delta^T\mathbf{1}\mathbf{1}^T\Delta\bbX^T\bbw_1|}=o(1/\sqrt n).\nonumber
\end{eqnarray}
Therefore,
$$\frac{2}{n^2}\bbw_1^T\bbX\mathbf{1}\mathbf{1}^T\bbA_{\Upsilon}^{-1}\bbX^T\bbw_1=o_{L_1}(1/\sqrt n).$$ Similarly, we have
$$\frac{2}{n^3}\bbw_1^T\bbX\mathbf{1}\mathbf{1}^T\bbA_{\Upsilon}^{-1}\mathbf{1}\mathbf{1}^T\bbX^T\bbw_1=o_{L_1}(1/\sqrt n).$$
In view of (\ref{0223.4}), it remains to show that
\begin{eqnarray}\label{0123.7}
\frac{1}{n}\bbw_1^T\bbX\bbA_{\Upsilon}^{-1}\bbX^T\bbw_1-\frac{1}{n}\bbw_1^T\bbX\bbA^{-1}\bbX^T\bbw_1=o_{L_1}(1/\sqrt n).
\end{eqnarray}
It is not hard to see that
\begin{equation}\label{0123.8}
\frac{1}{n}\bbA_{\Upsilon}^{-1}-\frac{1}{n}\bbA^{-1}=\frac{1}{n^2}\bbA_{\Upsilon}^{-1}(\frac{1}{n^2}\mathbf{1}\mathbf{1}^T\bbX^T\tilde {\bold\Sigma}_1\bbX\mathbf{1}\mathbf{1}^T-\frac{1}{n}\mathbf{1}\mathbf{1}^T\bbX^T\tilde {\bold\Sigma}_1\bbX-\bbX^T\tilde {\bold\Sigma}_1\bbX\frac{1}{n}\mathbf{1}\mathbf{1}^T)\bbA^{-1}.
\end{equation}
By (\ref{0123.8}), consider one term in the left hand side of (\ref{0123.7}) first, i.e.
\begin{eqnarray}\label{0123.9}
\frac{1}{n^3}\bbw_1^T\bbX\bbA_{\Upsilon}^{-1}\mathbf{1}\mathbf{1}^T\bbX^T\tilde {\bold\Sigma}_1\bbX\bbA^{-1}\bbX^T\bbw_1.
\end{eqnarray}
By the property that $\Upsilon \mathbf{1}=0$ and $\Upsilon^2=\Upsilon$, we have
$$\frac{1}{n}\bbA_{\Upsilon}^{-1}\mathbf{1}=\frac{1}{n}\sum_{k=0}^{\infty}(\frac{1}{n}\Upsilon\bbX^T\tilde {\bold\Sigma}_1\bbX\Upsilon)^k\mathbf{1}=\frac{1}{n}\mathbf{1}.$$
It follows from (\ref{0123.5}) that
\begin{eqnarray}\label{0123.10}
\mathbb{E}|(\ref{0123.9})|&=&\frac{1}{n^3}\mathbb{E}|\bbw_1^T\bbX\mathbf{1}\mathbf{1}^T\bbX^T\tilde {\bold\Sigma}_1\bbX\bbA^{-1}\bbX^T\bbw_1|\non
&\le& \frac{1}{n^3}\sqrt{\mathbb{E}(\bbw_1^T\bbX\mathbf{1})^2}\sqrt{\mathbb{E}(\mathbf{1}^T\bbX^T\tilde {\bold\Sigma}_1\bbX\bbA^{-1}\bbX^T\bbw_1)^2}=o(1/\sqrt n).
\end{eqnarray}
Similar to (\ref{0123.10}), one can prove
$$\frac{1}{n^4}\bbw_1^T\bbX\bbA_{\Upsilon}^{-1}\mathbf{1}\mathbf{1}^T\bbX^T\tilde {\bold\Sigma}_1\bbX\mathbf{1}\mathbf{1}^T\bbA^{-1}\bbX^T\bbw_1=o_{L_1}(1/\sqrt n).$$
For the remaining term of (\ref{0123.8})
$$
\frac{1}{n}\bbw_1^T\bbX(n\bbI-\Upsilon\bbX^T\tilde {\bold\Sigma}_1\bbX\Upsilon)^{-1}\bbX^T\tilde {\bold\Sigma}_1\bbX\mathbf{1}\mathbf{1}^T(n\bbI-\bbX^T\tilde {\bold\Sigma}_1\bbX)^{-1}\bbX^T\bbw_1,$$
Similar to (\ref{0123.10}), it suffices to show
\begin{eqnarray}\label{0123.11}
\frac{1}{n}\bbw_1^T\bbX\bbA^{-1}\mathbf{1}=O_{L_1}(1/\sqrt n).
\end{eqnarray}
Actually, applying the  same strategy as in (\ref{0314.1})-(\ref{0411.1}), we can prove that
\begin{eqnarray}\label{0123.12}
\frac{1}{n}\bbw_1^T\bbX\bbA^{-1}\mathbf{1}I(F_d)-\frac{1}{n}\mathbb{E}\bbw_1^T\bbX\bbA^{-1}\mathbf{1}I(F_d)=O_{L_1}(1/\sqrt n).
\end{eqnarray}
Moreover, applying the strategy of Section \ref{calculatemean}, one can show that
\begin{eqnarray}\label{0123.12h}
\frac{1}{n}\mathbb{E}\bbw_1^T\bbX\bbA^{-1}\mathbf{1}I(F_d)=O(1/\sqrt n).
\end{eqnarray}
The detailed proof of (\ref{0123.12}) and (\ref{0123.12h}) is omitted since it is even simpler than that of Theorem \ref{1101-1}.
\end{proof}
\section{Relax Assumption \ref{0829-1b}: Truncation and Centralization}\label{0914-1}
This section is to truncate and centralize $\bbx_{ij}$. By assumption \ref{0829-1a},  there exists a positive sequence $\delta_n$ satisfying
\begin{equation}\label{0321.2}
\lim_{n\rightarrow \infty}\frac{1}{np\delta_n^4}\sum_{i=1}^{p+l}\sum_{j=1}^n\mathbb{E}|\bbx_{ij}|^4I(|\bbx_{ij}|>\delta_n\sqrt[4]{np})=0, \ \ \delta_n\downarrow 0,\ \  \delta_n\sqrt[4]{np}\uparrow \infty.
\end{equation}
We first truncate $\bbx_{ij}$ to $\hat \bbx_{ij}=\bbx_{ij}I(|\bbx_{ij}|<\delta_n\sqrt[4]{np})$ and then get the centralized version $\tilde \bbx_{ij}=\frac{\hat \bbx_{ij}-\mathbb{E}\hat \bbx_{ij}}{\sigma_i}$, where $\sigma_i$ is the standard deviation of $\hat \bbx_{ij}$.  It is easy to see that
\begin{eqnarray}\label{0321.3}
\mathbb{P}(\bbX\neq \hat\bbX)&\le& \sum_{i=1}^{p+l}\sum_{j=1}^n\mathbb{P}(|\bbx_{ij}|\ge \delta_n\sqrt[4]{np})\non
&\le& \frac{C}{np\delta_n^4}\sum_{i=1}^{p+l}\sum_{j=1}^n\mathbb{E}|\bbx_{ij}|^4I(|\bbx_{ij}|>\delta_n\sqrt[4]{np})\rightarrow 0.
\end{eqnarray}
It follows that
$$\mathbb{P}(\bbU_1 \bbX(\lambda_i\bbI- \frac{1}{n}\bbX^T\bold\Sigma_1\bbX)^{-1}\bbX^T\bbU^T_1\neq\bbU_1\hat\bbX(\lambda_i\bbI-\frac{1}{n}\hat\bbX^T\Sigma_1\hat\bbX)^{-1}\hat\bbX^T\bbU^T_1)\rightarrow 0.$$
For convenience, define $\bbB_{\bbX}(x)=x\bbI-\frac{1}{n}\bbX^T\bold\Sigma_1\bbX$. Hence with probability tending to 1, the proofs of the above theorems based on (\ref{0912.2h}) are equivalent to
\begin{equation}\label{0912.2}
\det(\Lambda^{-1}_{\bbS}-\frac{1}{n}\bbU_1\hat\bbX\bbB_{\hat\bbX}(\lambda_i)^{-1}\hat \bbX^T\bbU^T_1)=0.
\end{equation}

Note that
\begin{eqnarray}\label{0321.4}
|1-\sigma_i^2|&\le& 2|\mathbb{E}(\bbx_{ij}^2)I(|\bbx_{ij}|>\delta_n\sqrt[4]{np})|\non
&\le& 2 (np)^{-1/2}\delta_n^{-2}\mathbb{E}|\bbx_{ij}|^4I(|\bbx_{ij}|>\delta_n\sqrt[4]{np}),
\end{eqnarray}
\begin{eqnarray}\label{0321.5}
&&|\mathbb{E}\hat \bbx_{ij}|\le \delta_n^{-3}(np)^{-3/4}\mathbb{E}|\bbx_{ij}|^4I(|\bbx_{ij}|>\delta_n\sqrt[4]{np}),
\end{eqnarray}
and
\begin{eqnarray}\label{0912.1}
&&\frac{1}{n}\mathbb{E}tr(\hat\bbX-\tilde\bbX)(\hat\bbX-\tilde\bbX)^T\le \sum_{i=1}^{p+l}\sum_{j=1}^n\mathbb{E}|\hat\bbx_{ij}-\tilde\bbx_{ij}|^2\non
&&\le \frac{C}{n}\sum_{i=1}^{p+l}\sum_{j=1}^n(\frac{(1-\sigma_i)^2}{\sigma_i^2}\mathbb{E}|\hat\bbx_{ij}|^2+\frac{1}{\sigma_i^2}|\mathbb{E}\hat\bbx_{ij}|^2) =o(\frac{1}{n}).
\end{eqnarray}

By (\ref{0321.4}), (\ref{0321.5}) and (\ref{0912.1}), replacing
$\hat \bbX$ by $\tilde \bbX$, it is easy to show the perturbation is $o_p(Kn^{-1/2})$, which means that
\begin{equation}\label{1014.1h}
\frac{1}{n}\|\bbU_1\hat\bbX\bbB_{\hat\bbX}(\lambda_i)^{-1}\hat\bbX^T\bbU^T_1-\bbU_1\tilde \bbX\bbB_{\tilde\bbX}(\lambda_i)^{-1}\tilde \bbX^T\bbU^T_1\|_{\infty}=o_p(Kn^{-1/2}\mu_i^{-1}),
\end{equation}
and
$$\frac{1}{n}\|\bbu_i^T\hat\bbX\bbB_{\hat\bbX}(\lambda_i)^{-1}\hat\bbX^T\bbu_i-\bbu_i^T\tilde \bbX\bbB_{\tilde\bbX}(\lambda_i)^{-1}\tilde \bbX^T\bbu_i\|_{\infty}=o_p(n^{-1/2}\mu_i^{-1}).$$
Therefore, (\ref{0912.2h}) can be rewritten as
\begin{equation}\label{0912.2}
\det(\Lambda^{-1}_1-\bbU_1\tilde \bbX\bbB_{\tilde\bbX}(\lambda_i)^{-1}\tilde \bbX^T\bbU^T_1+o_p(Kn^{-1/2}\mu_i^{-1})(\mathbf{1}\mathbf{1}^T-\bbe_i\bbe_i^T)+o_p(n^{-1/2}\mu_i^{-1})\bbe_i\bbe_i^T)=0,
\end{equation}
where $o_p(.)$ is the entry wise order. One should notice that we deal with $(\frac{1}{n}\bbU_1\hat\bbX\bbB_{\hat\bbX}(\lambda_i)^{-1}\hat \bbX^T\bbU^T_1)_{ii}$ independently with the other entries and hence we have the order $o_p(n^{-1/2}\mu_i^{-1})\bbe_i\bbe_i^T$. From the proof of Theorem \ref{0408-2}, it is not hard to find out that the terms involving $o_p(Kn^{-1/2})$ are negligible and does not affect CLT(see (\ref{1128.8})), which means that we can prove Theorem \ref{0408-2} from the following equality
\begin{equation}\label{0912.3}
\det(\mu_i\Lambda^{-1}_1-\mu_i\bbU_1\tilde \bbX(\lambda_i\bbI-\tilde \bbX^T\Sigma_1\tilde \bbX)^{-1}\tilde \bbX^T\bbU^T_1)=0.
\end{equation}
Checking on the proof  of Theorem \ref{0408-2}, all arguments hold for $\tilde \bbX$ as well. Up to now,  we have relaxed Assumption \ref{0829-1b} and finish this section.
\section{Proof of Theorem \ref{0831-1}.}

The proof of Theorem \ref{0831-1} is almost the same as that of  Theorem \ref{0408-2}. We illustrate the joint distribution of the first $n_1$ eigenvalues as an example. Checking on the proof of Theorems \ref{0408-2} and \ref{1101-1}  carefully, we can get the following equality similar to (\ref{1128.8})
\begin{eqnarray}\label{1128.9}
&&\theta_1\Lambda^{-1}_{\bbS}-\frac{\theta_1}{n}\bbU_1\bbX\bbB^{-1}(\theta_1)\bbX^T\bbU^T_1\\
&&=\left[
 \begin{matrix}
   \tilde S_n & O_p(\frac{K}{\sqrt n}) & ..& ..& O_p(\frac{K}{\sqrt n}) \\
    O_p(\frac{K}{\sqrt n}) & O_p(1) & .. & ..&  O_p(\frac{K}{\sqrt n})\\
   . & ... & .. & & .\\
   . & ... & O_p(1)   & &O_p(\frac{K}{\sqrt n})\\
   O_p(\frac{K}{\sqrt n}) & ... & O_p(\frac{K}{\sqrt n})    & &O_p(1)
  \end{matrix}
  \right],\nonumber
     \end{eqnarray}
 where $\tilde S_n$ is a $n_1\times n_1$ matrix such that $\sqrt n \tilde S_n\stackrel{D}{\rightarrow} \mathfrak{R}_1$. Here $\mathfrak{R}_1$ follows normal distribution with $\mathbb{E}\mathfrak{R}_1=0$ and the covariance of the $(\mathfrak{R}_1)_{k_1, l_1}$ and $(\mathfrak{R}_1)_{k_2, l_2}$ is $\lim_{n\rightarrow \infty}N^2\times Cov(\bbu_{k_1}^T\bbx\bbu_{l_1}^T\bbx,\bbu_{k_2}^T\bbx\bbu_{l_2}^T\bbx)$. The asymptotic distribution of $\mathfrak{R}_1$ is ensured by  the fact that the upper left $n_1\times n_1$ block of $\theta_1\Lambda^{-1}_{\bbS}-\frac{\theta_1}{n}\bbU_1\bbX\bbB^{-1}(\theta_1)\bbX^T\bbU^T_1$ is constructed by the entries with the expressions similar to (\ref{1101.1}) or (\ref{1101.2}). Therefore, by the Skorokhod strong representation and the corresponding arguments similar to page 464-465 of \cite{BY08} we conclude Theorem \ref{0831-1}.

\section{Proof of Theorem \ref{0503-1}}

\begin{proof}
Without loss of generality, we only consider the first spiked eigenvalue $\lambda_1$. The other spiked eigenvalues $\lambda_2,...,\lambda_K$ can be handled similarly.  By Cauchy's integral theorem and the residue theorem,  with high probability, we have
$$\bbv_1^T\xi_1\xi_1^T\bbv_1=-\frac{1}{2\pi i}\oint_{\Pi}\bbv_1^T\tilde \bbG(z)\bbv_1dz,$$
where $\tilde \bbG(z)=(\bbS_n-z\bbI)^{-1}$ and $\Pi$ is a contour enclosing $\lambda_1$ but the other eigenvalues $\lambda_i$. The existence of the contour $\Pi$ is ensured by Theorem \ref{0318-1} and Assumption \ref{0829-1c}. In the sequel, we directly work on the integral $-\frac{1}{2\pi i}\oint_{\Pi}\tilde \bbG(z)dz$. Write
\begin{eqnarray}\label{0502.1}
&&\\
&&\bbv_1^T\tilde \bbG(z)\bbv_1=\bbv_1^T(\bbS_n-z\bbI)^{-1}\bbv_1=\bbe_1^T(\frac{1}{n}\Lambda^{1/2}\bbU\bbX\bbX^T\bbU^T\Lambda^{1/2}-z\bbI)^{-1}\bbe_1\non
&&=\left(
  \begin{array}{ccc}
    \frac{1}{n}\lambda_1\bbu_1^T\bbX\bbX^T\bbu_1-z & \frac{1}{n}\lambda_1^{1/2}\bbu_1^T\bbX\bbX^T\tilde \bbU_2^T\Lambda_2^{1/2} \\
    \frac{1}{n}\lambda_1^{1/2}\Lambda_2^{1/2}\tilde \bbU_2\bbX\bbX^T\bbu_1 & \frac{1}{n}\Lambda_2^{1/2}\tilde \bbU_2\bbX\bbX^T\tilde \bbU_2^T\Lambda_2^{1/2}-z\bbI \\
  \end{array}
\right)^{-1}_{11}\non
&&=\left(\frac{1}{n}\lambda_1\bbu_1^T\bbX\bbX^T\bbu_1-z-\frac{1}{n^2}\lambda_1\bbu_1^T\bbX\bbX^T\tilde \bbU_2^T\Lambda_2^{1/2}(\frac{1}{n}\Lambda_2^{1/2}\tilde \bbU_2\bbX\bbX^T\tilde \bbU_2^T\Lambda_2^{1/2}-z\bbI)^{-1}\Lambda_2^{1/2}\tilde \bbU_2\bbX\bbX^T\bbu_1\right)^{-1}.\nonumber
\end{eqnarray}
The aim is to prove $\frac{1}{n^2}\lambda_1\bbu_1^T\bbX\bbX^T\tilde \bbU_2^T\Lambda_2^{1/2}(\frac{1}{n}\Lambda_2^{1/2}\tilde \bbU_2\bbX\bbX^T\tilde \bbU_2^T\Lambda_2^{1/2}-z\bbI)^{-1}\Lambda_2^{1/2}\tilde \bbU_2\bbX\bbX^T\bbu_1$ converges to $0$ in probability. Not that $\Lambda_2$ contains the remaining $K-1$ spiked eigenvalues and the other non-spiked eigenvalues. Moreover, the non-spiked eigenvalues are all dominated by $z$. Hinted by this observation, we write $\Lambda_2=\left(
  \begin{array}{ccc}
    \Lambda_{21} & 0 \\
    0 & \Lambda_{P} \\
  \end{array}
\right)$ and $\tilde\bbU_2=\left(
  \begin{array}{c}
    \bbU_{21} \\
   \bbU_{2} \\
  \end{array}
\right)$, where $\Lambda_{21}$ is $(K-1)\times(K-1)$ diagonal matrix and $\bbU_{21}$ is the corresponding $(K-1)\times (p+l) $ eigenvector matrix. It follows that
\begin{eqnarray}\label{0502.2}
&&(\frac{1}{n}\Lambda_2^{1/2}\tilde \bbU_2\bbX\bbX^T\tilde \bbU_2^T\Lambda_2^{1/2}-z\bbI)^{-1}=\left(
  \begin{array}{ccc}
    \frac{1}{n}\Lambda_{21}^{1/2}\bbU_{21}\bbX\bbX^T\bbU_{21}^T\Lambda_{21}^{1/2}-z\bbI & \frac{1}{n}\Lambda_{21}^{1/2}\bbU_{21}\bbX\bbX^T\bbU_{2}^T\Lambda_{P}^{1/2} \\
    \frac{1}{n}\Lambda_{P}^{1/2}\bbU_{2}\bbX\bbX^T\bbU_{21}^T\Lambda_{21}^{1/2} & \frac{1}{n}\Lambda_{P}^{1/2}\bbU_{2}\bbX\bbX^T\bbU_{2}^T\Lambda_{P}^{1/2}-z\bbI \\
  \end{array}
\right)^{-1}\non
&&=\left(
  \begin{array}{ccc}
   \bbA & \bbB \\
    \bbB^T & \bbC \\
  \end{array}
\right)^{-1}=\left(
  \begin{array}{ccc}
    (\bbA-\bbB\bbC^{-1}\bbB^T)^{-1} & -(\bbA-\bbB\bbC^{-1}\bbB^T)^{-1}\bbB\bbC^{-1} \\
     -\bbC^{-1}\bbB^T(\bbA-\bbB\bbC^{-1}\bbB^T)^{-1} & \bbC^{-1}+\bbC^{-1}\bbB^T(\bbA-\bbB\bbC^{-1}\bbB^T)^{-1}\bbB\bbC^{-1} \\
  \end{array}
\right),\non
\end{eqnarray}
where $\bbA$, $\bbB$ and $\bbC$ are defined in an obvious way.
By the definitions of $\bbA$, $\bbB$ and $\bbC$ and the choice of $\Pi$, it is easy to see that
\begin{eqnarray}\label{0216.1}
\|\Lambda_{21}^{-1/2}\bbB\|=O_p(\sqrt{\frac{p}{n}}), \|\bbC\|=O_p( |z|)
 \end{eqnarray}
 and
\begin{eqnarray}\label{0216.2}
\|\bbC^{-1}\|=O_p(\frac{1}{|z|}).
  \end{eqnarray}Moreover, $\|\bbA-\Lambda_{21}+z\bbI\|=o_p(1)$ since the dimension of $\bbA$ is $(K-1)\times (K-1)$. By (\ref{0502.2}), a straight forward calculation for block matrices yields
 \begin{eqnarray}\label{0503.1}
&&\bbX^T\tilde \bbU_2^T\Lambda_2^{1/2}(\frac{1}{n}\Lambda_2^{1/2}\tilde \bbU_2\bbX\bbX^T\tilde \bbU_2^T\Lambda_2^{1/2}-z\bbI)^{-1}\Lambda_2^{1/2}\tilde \bbU_2\bbX\\
&&=\bbX^T\bbU_{21}^T\Lambda_{21}^{1/2}(\bbA-\bbB\bbC^{-1}\bbB^T)^{-1}\Lambda_{21}^{1/2}\bbU_{21}\bbX-\bbX^T\bbU_{21}^T\Lambda_{21}^{1/2}(\bbA-\bbB\bbC^{-1}\bbB^T)^{-1}\bbB\bbC^{-1}\Lambda_{P}^{1/2}\bbU_{2}\bbX\non
&&-\bbX^T\bbU_{2}^T\Lambda_{P}^{1/2}\bbC^{-1}\bbB^T(\bbA-\bbB\bbC^{-1}\bbB^T)^{-1}\Lambda_{21}^{1/2}\bbU_{21}\bbX\non
&&+\bbX^T\bbU_{2}^T\Lambda_{P}^{1/2}\left(\bbC^{-1}+\bbC^{-1}\bbB^T(\bbA-\bbB\bbC^{-1}\bbB^T)^{-1}\bbB\bbC^{-1} \right)\Lambda_{P}^{1/2}\bbU_{2}\bbX.\nonumber
\end{eqnarray}
Although the expression of (\ref{0503.1}) is complicated, it is not hard to conclude that all terms at the right hand side of (\ref{0503.1}) are $o_p(1)$ in terms of the spectral norm. For instance, we calculate one term $\bbX^T\bbU_{21}^T\Lambda_{21}^{1/2}(\bbA-\bbB\bbC^{-1}\bbB^T)^{-1}\Lambda_{21}^{1/2}\bbU_{21}\bbX$. Note that
\begin{eqnarray}\label{0503.6}
&&\|\Lambda_{21}^{1/2}(\bbA-\bbB\bbC^{-1}\bbB^T)^{-1}\Lambda_{21}^{1/2}\|\non
&&=\|(\frac{1}{n}\bbU_{21}\bbX\bbX^T\bbU^T_{21}-z\Lambda_{21}^{-1}-\Lambda_{21}^{-1/2}\bbB\bbC^{-1}\bbB^T\Lambda_{21}^{-1/2})^{-1}\|\le 2\end{eqnarray}
 with probability tending to 1, where we use the fact that $\Lambda_{21}^{-1/2}\bbB=\frac{1}{n}\bbU_{21}\bbX\bbX^T\bbU_{2}^T\Lambda_{P}^{1/2}$ and therefore $\|\Lambda_{21}^{-1/2}\bbB\bbC^{-1}\bbB^T\Lambda_{21}^{-1/2}\|=o_p(1)$ by (\ref{0216.1})-(\ref{0216.2}). Hence,
\begin{eqnarray}\label{0503.2}
&&\frac{1}{n^2}\bbu_1^T\bbX\bbX^T\tilde \bbU_2^T\Lambda_2^{1/2}(\frac{1}{n}\Lambda_2^{1/2}\tilde \bbU_2^T\bbX\bbX^T\tilde \bbU_2\Lambda_2^{1/2}-z\bbI)^{-1}\Lambda_2^{1/2}\tilde \bbU_2\bbX\bbX^T\bbu_1\non
&&\le O_p(\frac{1}{n^2}\bbu_1^T\bbX\bbX^T\bbU_{21}^T\bbU_{21}\bbX\bbX^T\bbu_1)+o_p(1).
\end{eqnarray}
 By the fact that the rank of $\bbU_{21}$ is $K-1$ and $\bbu_1^T\bbU_{21}^T=0$, it suffices to consider such  a term $\bbu_1^T\bbX\bbX^T\bbu_2\bbu_3^T\bbX\bbX^T\bbu_1$, where $\bbu_1^T\bbu_2=\bbu_1^T\bbu_3=0$, $\bbu_2^T\bbu_3=0$ or $1$. Since $\frac{1}{n^2}\mathbb{E}\bbu_1^T\bbX\bbX^T\bbu_2\bbu_3^T\bbX\bbX^T\bbu_1=O(n^{-1})$, we conclude that
 \begin{equation}\label{0503.2h}
 \frac{1}{n^2}\mathbb{E}|\bbu_1^T\bbX\bbX^T\bbU_{21}^T\bbU_{21}\bbX\bbX^T\bbu_1|=\frac{1}{n^2}\mathbb{E}\bbu_1^T\bbX\bbX^T\bbU_{21}^T\bbU_{21}\bbX\bbX^T\bbu_1=O(\frac{K^2}{n}).
 \end{equation}
 Combining (\ref{0502.2})-(\ref{0503.2h}), we get that $(\ref{0502.1})\sim (\lambda_1-z)^{-1}$ with probability tending to one. Noticing that (\ref{0503.2}) holds uniformly for $z\in \Gamma$, we have  $(\ref{0502.1})\sim (\lambda_1-z)^{-1}$ holds uniformly for $z\in \Gamma$.  i.e. with probability tending to one and for all $z\in \Gamma$, we have
 \begin{equation}\label{0227.1h}
 \bbv_1^T\xi_1\xi_1^T\bbv_1\rightarrow 1.
  \end{equation}

\end{proof}
\section{Proof of Corollary \ref{1130-1}}
Without loss of generality, we assume eigenvectors are real, otherwise we consider $\sum_{j=1}^p|v_{ij}|^4$. Since $\xi_i$ and $-\xi_i$ are regarded as the same eigenvectors in the eigenvector space, we always choose the direction such that $\bbv_1^T\xi_1\ge 0$. Therefore, by (\ref{0227.1h}) we have
$$\bbv_i^T\xi_i \stackrel{i.p.}{\longrightarrow} 1.$$
 By Theorem \ref{0503-1}, we have $\sum_{j=1}^p[v_{ij}-\xi_{ij}]^2=o_p(1)$, which implies that
$$\max_j|v_{ij}-\xi_{ij}|=o_p(1).$$
Therefore, we get
$$\sum_{j=1}^p|v_{ij}^4-\xi_{ij}^4|\le \sum_{j=1}^p(|v_{ij}|+|\xi_{ij}|)^3\max_j|v_{ij}-\xi_{ij}|=o_p(1).$$
This conclusion tells us that the sample eigenvector is a proper estimation of $\sum_{j=1}^p[v_{ij}^4]$.

\section{Proof of Theorem \ref{0606-2}}
Inspired by \cite{B14} and \cite{KY11} in this section we establish asymptotic distribution of the largest non-spiked eigenvalues of the sample covariance matrices $\frac{1}{n}\Gamma\bbX\bbX^{T}\Gamma^{T}$.  For simplicity and consistency with the papers such as \cite{BPZ2014a} and \cite{KY11}, we absorb $\frac{1}{\sqrt n}$ into $\bbX$ and consider the eigenvalues of the matrix $\Gamma\bbX\bbX^{T}\Gamma^{T}$ instead. That is to say, $var(\bbx_{ij})=\frac{1}{n}$ and $\mathbb{E}|\bbx_{ij}|^k\le \frac{c_k}{n^{k/2}}$. Without loss of generality, we assume that $\mu_{K+1}>1$. Correspondingly, $\nu_i$ is the i-th largest eigenvalue of $\bbX^T\bold\Sigma_1\bbX$ in this section. Let $\bbD(z)=z\bbI-\bbX^T\bbU^T_2\Lambda_P\bbU_2\bbX$. As the first step of the proof of Theorem \ref{0606-2}, by (\ref{0604.1}), we have the following Lemma
\begin{lem}\label{0609-1}
If $\lambda$ is not the eigenvalue of $\bbX^{T}\Sigma_1\bbX$, then $\lambda$ is the eigenvalue of $\Gamma\bbX\bbX^{T}\Gamma^{T}$ is equivalent to
$$\det(\bbI-\Lambda^{1/2}_S\bbU_1\bbX\bbD^{-1}(\lambda)\bbX^T\bbU^T_1\Lambda^{1/2}_S)=0.
$$
\end{lem}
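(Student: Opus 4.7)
The plan is to mimic the derivation of equation (\ref{0604.1}) that appeared earlier in the proof of Theorem \ref{0408-2}, now packaged as a standalone equivalence.

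First I would pass to the dual form. Using the SVD $\bold\Gamma=\bbV\Lambda^{1/2}\bbU$ from (\ref{a3}) and the standard fact that $\bold\Gamma\bbX\bbX^{\intercal}\bold\Gamma^{\intercal}$ and $\bbX^{\intercal}\bbU^{\intercal}\Lambda\bbU\bbX$ share the same nonzero eigenvalues, any nonzero $\lambda$ is an eigenvalue of $\bold\Gamma\bbX\bbX^{\intercal}\bold\Gamma^{\intercal}$ iff $\det(\lambda\bbI-\bbX^{\intercal}\bbU^{\intercal}\Lambda\bbU\bbX)=0$. Splitting $\bbU=\left(\begin{smallmatrix}\bbU_1\\ \bbU_2\end{smallmatrix}\right)$ according to the block decomposition (\ref{a7}) of $\Lambda$ gives
\[
\bbX^{\intercal}\bbU^{\intercal}\Lambda\bbU\bbX=\bbX^{\intercal}\bbU_1^{\intercal}\Lambda_S\bbU_1\bbX+\bbX^{\intercal}\bbU_2^{\intercal}\Lambda_P\bbU_2\bbX,
\]
so the eigenvalue equation becomes $\det\bigl(\bbD(\lambda)-\bbX^{\intercal}\bbU_1^{\intercal}\Lambda_S\bbU_1\bbX\bigr)=0$, where $\bbD(\lambda)=\lambda\bbI-\bbX^{\intercal}\bbU_2^{\intercal}\Lambda_P\bbU_2\bbX$.

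Next I would invoke the hypothesis that $\lambda$ is not an eigenvalue of $\bbX^{\intercal}\bold\Sigma_1\bbX=\bbX^{\intercal}\bbU_2^{\intercal}\Lambda_P\bbU_2\bbX$. This makes $\bbD(\lambda)$ invertible, so we may factor:
\[
\det\bigl(\bbD(\lambda)-\bbX^{\intercal}\bbU_1^{\intercal}\Lambda_S\bbU_1\bbX\bigr)=\det\bbD(\lambda)\cdot\det\bigl(\bbI-\bbD^{-1}(\lambda)\bbX^{\intercal}\bbU_1^{\intercal}\Lambda_S\bbU_1\bbX\bigr),
\]
and the first factor is nonzero. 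Hence the equation reduces to $\det\bigl(\bbI-\bbD^{-1}(\lambda)\bbX^{\intercal}\bbU_1^{\intercal}\Lambda_S\bbU_1\bbX\bigr)=0$. Finally, applying the Sylvester-type identity $\det(\bbI-\bbC\bbD)=\det(\bbI-\bbD\bbC)$ with $\bbC=\bbD^{-1}(\lambda)\bbX^{\intercal}\bbU_1^{\intercal}\Lambda_S^{1/2}$ and $\bbD=\Lambda_S^{1/2}\bbU_1\bbX$, we obtain the desired equivalence
\[
\det\bigl(\bbI-\Lambda_S^{1/2}\bbU_1\bbX\bbD^{-1}(\lambda)\bbX^{\intercal}\bbU_1^{\intercal}\Lambda_S^{1/2}\bigr)=0.
\]

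This is really a purely algebraic identity, so there is no serious obstacle; the only point that requires a moment's care is verifying that $\bbD(\lambda)$ is genuinely invertible under the stated hypothesis (which is immediate since $\lambda\notin\mathrm{spec}(\bbX^{\intercal}\bold\Sigma_1\bbX)$) and that the passage from $\bold\Gamma\bbX\bbX^{\intercal}\bold\Gamma^{\intercal}$ to its dual $\bbX^{\intercal}\bbU^{\intercal}\Lambda\bbU\bbX$ is lossless for the eigenvalues of interest—which holds for any nonzero $\lambda$, and the stated application only concerns such $\lambda$.
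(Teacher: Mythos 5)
Your argument is correct and is essentially the paper's own derivation of this lemma, which simply reuses the chain leading to (\ref{0604.1}): pass from $\bold\Gamma\bbX\bbX^{\intercal}\bold\Gamma^{\intercal}$ to $\bbX^{\intercal}\bbU^{\intercal}\bold\Lambda\bbU\bbX$ via $\det(\bbI-\bbC\bbD)=\det(\bbI-\bbD\bbC)$, split off the non-spiked block, factor out the invertible $\bbD(\lambda)$, and apply the same determinant identity once more. Your explicit restriction to nonzero $\lambda$ is consistent with the paper, whose use of the identity at (\ref{1101.7}) implicitly requires the same and which only applies the lemma to eigenvalues near or above $\gamma_+$.
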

In order to show the eigenvalue sticking, we need to prove the local law for
\begin{eqnarray}\label{1128.10}
\bbU_1\bbX\bbD^{-1}(z)\bbX^T\bbU_1^T,
\end{eqnarray}
where $\bbU_1\bbU^T_2=0$. First of all, we consider the special case $l=0$. To this end, we introduce the following linearization matrix
\begin{eqnarray}\label{1125.1}
&&\\
&&\bbH(z)\coloneqq \left(
  \begin{array}{ccc}
     z\bbI& \bbX^T\bbU_2^T\Lambda_P^{1/2}  &\bbX^T  \bbU_1^T\\
   \Lambda_P^{1/2}\bbU_2\bbX &\bbI& 0 \\
     \bbU_1  \bbX &0 & \bbI\\
  \end{array}
\right)^{-1}\non
&&=\left(
  \begin{array}{cc}
     \bbI& 0 \\
       0    &  (\bbU_2^T\Lambda_P^{1/2},\bbU_1^T)\\
  \end{array}
\right)^{-1}\left(
  \begin{array}{cc}
     z\bbI& \bbX^T \\
       \bbX     &  \tilde \Sigma\\
  \end{array}
\right)^{-1}\left(
  \begin{array}{cc}
     \bbI& 0 \\
       0     &  (\bbU_2^T\Lambda_P^{1/2},\bbU_1^T)^T \\
  \end{array}
\right)^{-1},\nonumber
\end{eqnarray}
where the last equality follows from the assumption that $L=0$ and $\tilde \Sigma=\left(
  \begin{array}{cc}
     \Lambda_P& 0 \\
       0     &  \bbI \\
  \end{array}
\right)$.
By simple calculation, it is easy to see that the lower right block of $\bbH(z)$ is equal to $(\bbI-\bbU_1\bbX\bbD^{-1}(z)\bbX^T\bbU_1^T)^{-1}$. We introduce a definition before giving the local law.
  \begin{deff}\label{1123.1}
Let
$$\xi=\{\xi^{(N)}(u):N\in \mathbb{N}, u\in U^{(N)}\}, \ \ \zeta=\{\zeta^{(N)}(u):N\in \mathbb{N}, u\in U^{(N)}\}$$
be two families of nonnegative random variables, where $U^{(N)}$ is a parameter set (can be either dependent on or independent of $N$). If for all small positive $\ep$ and $\sigma$, there exists a number $N(\ep,\sigma)$ only depending on $\ep$ and $\sigma$ such that
$$\sup_{u\in U^{(N)}}\mathbb{P}\left[|\xi^{(N)}(u)|>N^{\ep}|\zeta^{(N)}(u)|\right]\le N^{-\sigma}$$
for large enough $n\ge n(\ep,\sigma)$, then we say that $\zeta$ stochastically dominates $\xi$ uniformly in u. We denote this relationship by $\xi \prec \zeta$ or $\xi =O_{\prec}(\zeta)$. Moreover, if there exists a constant C such that $C^{-1}\le \frac{\xi}{\zeta}\le C$, then we say $\xi\sim \zeta$.
\end{deff}
By Theorem 3.7 of \cite{KY14}, we conclude that
\begin{equation}\label{0606.2}
\|(\bbI-\bbU_1\bbX\bbD^{-1}(z)\bbX^T\bbU_1^T)^{-1}-(\bbI+m_{\Sigma_1}(z))^{-1}\|_{\infty}\prec \sqrt{\frac{1}{n\kappa(z)}},
\end{equation}
where $m_{\Sigma_1}(z)$ is the unique solution of the following equation
\begin{eqnarray}\label{0331.2}
m_{\Sigma_1}(z)=-\frac{1}{z-\frac{1}{n}tr(\bbI+ m_{\Sigma_1}(z) \Sigma_1)^{-1}\Sigma_1}, \ \ z\in \mathbb{C}^+,
\end{eqnarray}
$\kappa(z)=|\Re z-\gamma_+|$, 
$n^{-2/3+5\ep}\le \Re z-\gamma_+\le 2\gamma_+$ and $\gamma_+$ is the rightmost end point of the density determined by $m_{\Sigma_1}(z)$.
Similarly, it follows from  Theorem 3.6 of \cite{KY14} that
\begin{eqnarray}\label{0606.2h}
\|(\bbI-\bbU_1\bbX\bbD^{-1}(z)\bbX^T\bbU_1^T)^{-1}-(\bbI+m_{\Sigma_1}(z))^{-1}\|_{\infty}\prec \Phi(z),
\end{eqnarray}
where  $\Phi(z)=\sqrt{\frac{\Im m_{\Sigma_1}(z)}{n\Im z}}+\frac{1}{n\Im z}$, $\Im z\ge n^{-2/3-\ep}$ and  $-c\le \Re z-\gamma_+\le n^{-2/3+5\ep}$ for some small constant c.
But this is not enough for the proof since $z$ is very large when we consider the spiked eigenvalues. We below prove a stronger version of (\ref{0606.2}) instead.

Before doing it, note that our objective is $\bbU_1\bbX\bbD^{-1}(z)\bbX^T\bbU_1^T$ instead of $ (\bbI-\bbU_1\bbX\bbD^{-1}(z)\bbX^T\bbU_1^T)^{-1}$ by (\ref{1128.10}). Therefore, we first need to  develop its upper bound from (\ref{0606.2}). By the formula that
$$\bbA^{-1}-\bbB^{-1}=-\bbA^{-1}(\bbA-\bbB)\bbB^{-1},$$
we have the following Neumann series
\begin{eqnarray}\label{0606.3}
&&\\
&&\bbU_1\bbX\bbD^{-1}(z)\bbX^T\bbU_1^T+m_{\Sigma_1}(z)\bbI=(\bbI+m_{\Sigma_1}(z)\bbI)-(\bbI-\bbU_1\bbX\bbD^{-1}(z)\bbX^T\bbU_1^T)\non
&&=\sum_{r=1}^{\infty}(-1)^{r+1}(1+m_{\Sigma_1}(z))^{r+1}\Delta^{r},\nonumber
\end{eqnarray}
where $\Delta=(\bbI-\bbU_1\bbX\bbD^{-1}(z)\bbX^T\bbU_1^T)^{-1}-(\bbI+m_{\Sigma_1}(z)\bbI)^{-1}$.  By (\ref{0606.2}), we know that $\|\Delta\|_{\infty}\prec \sqrt{\frac{1}{n\kappa}}$. Moreover, by the large deviation bound(see Lemma 3.4 of \cite{BPWZ2014b}, \cite{B14} or \cite{KY11}) we have
\begin{eqnarray}\label{1128.11}
\|m_{\Sigma_1}(z)\bbU_1\bbX\bbX^T\bbU_1^T-m_{\Sigma_1}(z)\bbI\|_{\infty}\prec \sqrt{\frac{1}{n}}.
\end{eqnarray}
 The expansion at the right hand side of (\ref{0606.3}) is ensured by the fact that $z$ is very close to or outside the support of $\bbX^T\bbU^T_2\Lambda_P\bbU_2\bbX$ and $\|\Delta\|\ll 1$. Together with the fact that $K\ll n^{1/6}\ll \sqrt{n\kappa}$, we conclude that
\begin{eqnarray}\label{0611.3}
&&\\
&&\|\bbU_1\bbX\bbD^{-1}(z)\bbX^T\bbU_1^T+m_{\Sigma_1}(z)\bbU_1^T\bbX\bbX^T\bbU_1\|_{\infty}\prec \sqrt{\frac{1}{n\kappa}}, n^{-2/3+5\ep}\le \Re z-\gamma_+\le 2\gamma_+.\nonumber
\end{eqnarray}

Up to now, we only show (\ref{0611.3}) holds for the case l=0. When $l \neq 0$,  we can find a $l\times (p+l)$ matrix $\bbU_3$ such that $\bbU_3\bbU_1^T=0$ and $\bbU_3\bbU_2^T=0$. Let $\tilde \bbU_1=(\bbU_1^T,\bbU_3^T)^T$. Since the dimension of $\tilde \bbU_1\bbX\bbD^{-1}(z)\bbX^T\tilde \bbU_1^T$ is $(l+K)\times (l+K)$ and $l+K\ll n^{1/6}$. Then by similar arguments from (\ref{0606.2}) to  (\ref{0611.3}) we have
\begin{eqnarray}\label{1128.12}
&&\\
&&\|\tilde \bbU_1\bbX\bbD^{-1}(z)\bbX^T\tilde \bbU_1^T+m_{\Sigma_1}(z)\tilde \bbU_1^T\bbX\bbX^T\tilde \bbU_1\|_{\infty}\prec \sqrt{\frac{1}{n\kappa}}, \  n^{-2/3+5\ep}\le \Re z-\gamma_+\le 2\gamma_+.\nonumber
\end{eqnarray}
This  implies that (\ref{0611.3}) also holds for the case $l\ll n^{1/6}$. Similarly, we also have
\begin{equation}\label{1128.12h}
\|\tilde \bbU_1\bbX\bbD^{-1}(z)\bbX^T\tilde \bbU_1^T+m_{\bold\Sigma_1}(z)\tilde \bbU_1^T\bbX\bbX^T\tilde \bbU_1\|_{\infty}\prec \Phi(z), \Im z\ge n^{-2/3-\ep} ,  -c\le \Re z-\gamma_+\le n^{-2/3+5\ep}.
\end{equation}
In the sequel
we  prove the local law when $z$ is far away from $\gamma_+$.
\begin{thm}\label{0610-1}
For all $\Im z\ge 0$, $\Re z=t\sim \varphi(n)$ and $\varphi(n)\rightarrow \infty$ when $n\rightarrow \infty$, we have
\begin{eqnarray}\label{0610.3}
\|\bbU_1\bbX\bbD^{-1}(z)\bbX^T\bbU_1^T+m_{\bold\Sigma_1}(z)\bbU_1\bbX\bbX^T\bbU_1^T\|_{\infty}\prec \frac{1}{\kappa(t)\sqrt n}.
\end{eqnarray}
\end{thm}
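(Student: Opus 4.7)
The plan is to exploit the fact that $|z|\sim\varphi(n)\to\infty$ via a Neumann-series expansion in powers of $1/z$, which yields a much sharper estimate than the local law (\ref{0611.3}) valid only on the bulk. Set $\bbW:=\bbX^T\bbU_2^T\Lambda_P\bbU_2\bbX$. Since the nonspiked population eigenvalues are bounded and $p\sim n$, standard edge bounds \cite{BG2012} give $\|\bbW\|\le c_0$ with overwhelming probability, for some constant $c_0$. Thus for $|z|\ge 2c_0$ (which holds eventually by the hypothesis $|z|\sim\varphi(n)\to\infty$), the series
\[
\bbD^{-1}(z)=\sum_{k\ge 0}z^{-k-1}\bbW^k
\]
converges on a high-probability event. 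The defining equation (\ref{0331.2h}) likewise yields the asymptotic expansion $m_{\bold\Sigma_1}(z)=-\sum_{k\ge 0}z^{-k-1}m_k$, where $m_k$ is the $k$-th moment of the LSD of $\bbW$; in particular $m_0=1$.

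Plugging both expansions in, and using $m_0=1$ to cancel the $k=0$ terms, the target becomes
\[
\bbU_1\bbX\bbD^{-1}(z)\bbX^T\bbU_1^T+m_{\bold\Sigma_1}(z)\bbU_1\bbX\bbX^T\bbU_1^T=\sum_{k\ge 1}z^{-k-1}\bigl(\bbU_1\bbX\bbW^k\bbX^T\bbU_1^T-m_k\,\bbU_1\bbX\bbX^T\bbU_1^T\bigr).
\]
The heart of the argument is an entry-wise bilinear-form bound: for rows $\bbu_i,\bbu_j$ of $\bbU_1$, setting $\bby_i:=\bbX^T\bbu_i\in\mathbb{R}^n$, I will show
\[
\bigl|\bby_i^T\bbW^k\bby_j-m_k\,\bby_i^T\bby_j\bigr|\prec c_0^k/\sqrt n,
\]
obtained by splitting the left side as $\bby_i^T\bigl[\bbW^k-(n^{-1}\mathrm{tr}\bbW^k)\bbI\bigr]\bby_j+\bigl[n^{-1}\mathrm{tr}\bbW^k-m_k\bigr]\bby_i^T\bby_j$. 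The first piece is controlled by a Hanson-Wright type quadratic-form concentration (Lemma 3.4 of \cite{BPWZ2014b}) applied conditionally on $\bbW$: the orthogonality $\bbu_i\perp\bbU_2^T$ makes $\bby_i$ uncorrelated with the entries of $\bbW$ (and genuinely independent in the Gaussian case), producing the bound $\|\bbW^k\|_F/n\le c_0^k/\sqrt n$. The second piece uses $n^{-1}\mathrm{tr}\bbW^k-m_k=O_{\prec}(n^{-1})$, inherited from the local law for $\bbW$ on its spectral bulk.

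Summing the resulting geometric series gives
\[
\sum_{k\ge 1}|z|^{-k-1}\cdot\frac{c_0^k}{\sqrt n}\lesssim\frac{1}{|z|^2\sqrt n}\sim\frac{1}{\kappa(t)^2\sqrt n},
\]
which is even stronger than the claim $1/(\kappa(t)\sqrt n)$. The principal technical obstacle is the decoupling of $\bby_i$ from $\bbW$ in the non-Gaussian case, where orthogonality gives only uncorrelatedness and not independence; this is handled by adjoining $l$ additional orthonormal rows to $\bbU$ to form a full orthonormal basis of $\mathbb{R}^{p+l}$ and applying the standard quadratic-form decoupling used in \cite{BPWZ2014b,KY14}. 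A secondary issue is the uniformity in $k$ needed for the geometric sum, which is automatic because the Hanson-Wright constants depend only on the bounded moments of $\bbx_{ij}$ and on $c_0$.
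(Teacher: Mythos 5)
Your large-$|z|$ Neumann expansion and the reduction to the entrywise bound $|\bby_i^{\intercal}\bbW^k\bby_j-m_k\,\bby_i^{\intercal}\bby_j|\prec c_0^k/\sqrt n$ are fine as a skeleton (and the $k=0$ cancellation via $m_0=1$ is correct), but the step that carries all the weight is not justified: you cannot apply Hanson--Wright ``conditionally on $\bbW$'', because $\bby_i=\bbX^{\intercal}\bbu_i$ and $\bbW=\bbX^{\intercal}\bbU_2^{\intercal}\Lambda_P\bbU_2\bbX$ are built from the same $\bbX$. The orthogonality $\bbu_i\bbU_2^{\intercal}=0$ gives exact independence only in the Gaussian case; for general entries it gives uncorrelatedness, and the conditional law of $\bby_i$ given $\bbW$ is not that of a vector with independent standardized entries, so the quadratic-form large-deviation bound (Lemma 3.4 of \cite{BPWZ2014b}) does not apply to $\bby_i^{\intercal}[\bbW^k-(n^{-1}\mathrm{tr}\,\bbW^k)\bbI]\bby_j$ as you use it. This dependence between the ``direction'' vectors and the resolvent/powers of $\bbW$ is precisely the central technical difficulty of the whole problem: it is why the paper proves a separate martingale CLT (Theorem \ref{1101-1}) for such quadratic forms and, for the local-law statements, works through the linearization (\ref{1125.1}) so that the anisotropic local law of \cite{KY14} can be invoked. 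Your one-sentence remedy (``adjoin $l$ rows and apply the standard decoupling of \cite{BPWZ2014b,KY14}'') is a pointer to exactly that machinery, not an argument; uniformity in $k$ of such a decoupled bound would also have to be established, since the expansion requires geometric control in $k$.

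For comparison, the paper's proof of Theorem \ref{0610-1} avoids re-proving any concentration at large $z$: it writes $m^s(z)=-\bbu_1^{\intercal}\bbX\bbD^{-1}(z)\bbX^{\intercal}\bbu_1-m_{\bold\Sigma_1}(z)\bbu_1^{\intercal}\bbX\bbX^{\intercal}\bbu_1$ as the Stieltjes transform of a signed measure of total mass zero supported (with high probability, by rigidity) near $[-1,\gamma_+]$, and then uses the Helffer--Sj\"{o}strand formula with a cutoff $\chi$ to express $m^s(z)$ through the values $m^s(\omega)$ at points $\omega$ at distance of order one from the support, where the already-established bound (\ref{1128.12}) gives $m^s(\omega)\prec n^{-1/2}$ and $|f_z(\omega)|\sim\kappa^{-1}(z)$; this transfers the local law outward and yields $\prec \kappa^{-1}(t)n^{-1/2}$. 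If you want to salvage your route, the natural fix is to represent $\bbW^k$ by a Cauchy integral over a contour at distance $O(1)$ from the spectrum and bound $\bby_i^{\intercal}(\omega\bbI-\bbW)^{-1}\bby_j$ there by (\ref{1128.12}) --- but that is essentially the paper's transfer argument in disguise, which shows the Neumann expansion does not by itself buy an independent proof of the hard estimate outside the Gaussian setting.
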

\begin{proof}
We prove
$$\bbu_1^T\bbX\bbD^{-1}(t)\bbX^T\bbu_1+m_{\bold\Sigma_1}(t)\bbu_1^T\bbX\bbX^T\bbu_1\prec \frac{1}{\kappa(t)\sqrt n}.$$
as an example. The other entries of (\ref{0610.3}) can be shown similarly.
Define
$$m^s(z)=-\bbu_1^T\bbX\bbD^{-1}(z)\bbX^T\bbu_1-m_{\bold\Sigma_1}(z)\bbu_1^T\bbX\bbX^T\bbu_1, \ z\in \mathbb{C}^+, \ \Re z\gg1,$$
and
\begin{eqnarray}\label{0610.4}
&&\\
F^s(x)=\sum_{i=1}^n\bbu_1^T\bbX\zeta_i\zeta_i^T\bbX^T\bbu_1I(\nu_i\le x)-F_0(x)(dx)\bbu_1^T\bbX\bbX^T\bbu_1,\nonumber
\end{eqnarray}
where $F_0(x)$ is the c.d.f. determined by $m_{\bold\Sigma_1}(z)$,  $\nu_i=\lambda_i(\bbX^T\bold\Sigma_1\bbX)$ and $\zeta_i$ is the corresponding eigenvector.
Hence, we have the steitjes transform
\begin{eqnarray}\label{0610.5}
m^s(z)=\int \frac{\rho^s(dx)}{x-z}, \ \Im z>0.
\end{eqnarray}
We next apply the Helffer-Sj\"{o}strand formula to the following function
$$f_z(x)=\frac{1}{x-z}.$$
 Let $\omega=x+yi\in \mathbb{C}$. Then define $\frac{\partial f(\omega)}{\partial \bar{\omega}}=\frac{\partial f(\omega)}{\partial x}+i\frac{\partial f(\omega)}{\partial y}$.
 In order to apply the Helffer-Sj\"{o}strand formula(referring to \cite{D95}), we need to look for a smooth version of $f_z(x)$, i.e. we define a smooth function $\chi(\omega)\in [0,1], \omega\in \mathbb{C}^+$ satisfying $\frac{\partial \chi(\omega)}{\partial \bar{\omega}}\le C$, where $C$ is a constant.  We choose a small constant $\omega'>0$  and require $\chi(\omega)=1$ for all $\omega$ belongs to $\omega'$-neighbourhood of $ [-1,\gamma_+]$ and $0$ outside the $2\omega'$-neighbourhood of $[-1,\gamma_+]$. By rigidity of the eigenvalues, i.e. $|\nu_1-\gamma_+|\prec n^{-2/3}$, we conclude that $supp \rho^s\subset (-2\omega',\gamma_++2\omega')$ with high probability. Therefore we can choose suitable z to be away from the support of $\bbX^T\bold\Sigma_1\bbX$, i.e. $z > \gamma_++3\omega'$. Then by the Helffer-Sj\"{o}strand formula, we have that for all $x\in supp \rho^s$,
 \begin{eqnarray}\label{0610.6}
 f_z(x)=\frac{1}{\pi}\int_{\mathbb{C}}\frac{\partial_{\bar{\omega}}(f_z(\omega)\chi(\omega))}{x-\omega}d\omega.
 \end{eqnarray}
 By the trivial fact that $\int \rho^s(dx)=0$, we have
  \begin{eqnarray}\label{0611.1}
 m^s(z)=\int\rho^s(dx) f_z(x)=\frac{1}{\pi}\int_{\mathbb{C}}f_z(\omega)\partial_{\bar{\omega}}(\chi(\omega)) m^s(\omega)d\omega,
 \end{eqnarray}
 where the second equality follows from the fact that $f_z(\omega)$ is analytic away from  $supp \rho^s$. By the definition of $\chi$, we have $\{\frac{\partial \chi}{\partial \bar\omega}\neq 0\}\subset \{\omega:dist[-1,\gamma_+]\in [\omega',2\omega']\}$ and on this interval we conclude that $|f_z(\omega)|\sim \kappa^{-1}(z)$. Moreover, following from (\ref{1128.12}), we have $m^s(\omega)\prec \frac{1}{\sqrt n}$ in the set $\{\frac{\partial \chi}{\partial \bar\omega}\neq 0\}$. Therefore we have
 $$ m^s(z)\prec \frac{1}{\sqrt n}\kappa^{-1}(z).$$
Up to now, we have shown that (\ref{0610.3}) holds when $\Im z>0$. To complete our proof, let $z=t+in^{-10}$. By the continuity of $m_{\bold\Sigma_1}(z)$ and $\bbX^T\bbU^T_2\Lambda_P\bbU_2\bbX-z\bbI$, it is easy to conclude (\ref{0610.3}).
\end{proof}
Immediately, we can get  Corollary \ref{0611-1} from Theorem \ref{0610-1}.
\begin{coro}\label{0611-1}
Under the conditions of Theorem \ref{0610-1} we have
  \begin{eqnarray}\label{0611.4}
  \|\bbU_1\bbX\bbD^{-1}(t)\bbX^T\bbU_1^T+m_{\bold\Sigma_1}(t)\bbI\|_{\infty}\prec \frac{1}{\kappa(t)\sqrt n}.
  \end{eqnarray}
\end{coro}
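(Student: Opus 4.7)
The corollary is essentially a cosmetic refinement of Theorem \ref{0610-1}: the statement replaces the random matrix $\bbU_1\bbX\bbX^T\bbU_1^T$ appearing inside the theorem by the identity $\bbI$. So the plan is to show that $\bbU_1\bbX\bbX^T\bbU_1^T$ is already $\bbI$ up to an entrywise error $\prec 1/\sqrt n$, and then use the triangle inequality against Theorem \ref{0610-1}, controlling the Stieltjes transform prefactor to absorb the new error term into $1/(\kappa(t)\sqrt n)$.

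\textbf{Step 1: Concentration of $\bbU_1\bbX\bbX^T\bbU_1^T$.} First I would check the mean. Since $\bbU$ is orthogonal with $\bbU\bbU^T=\bbI_p$, its first $K$ rows $\bbU_1$ satisfy $\bbU_1\bbU_1^T=\bbI_K$. With the rescaling used in the proof of Theorem \ref{0606-2} ($\operatorname{Var}(\bbx_{ij})=1/n$), one has $\mathbb{E}[\bbX\bbX^T]=\bbI_{p+l}$, so $\mathbb{E}[\bbU_1\bbX\bbX^T\bbU_1^T]=\bbI_K$. For each entry, writing $\bbu_i^T\bbX\bbX^T\bbu_j=\sum_{k=1}^n(\bbu_i^T\bbx_k)(\bbx_k^T\bbu_j)$, the standard large deviation bound for quadratic forms in independent entries (the Lemma 3.4 of \cite{BPWZ2014b} cited earlier in the proof of Theorem \ref{0610-1}) gives
\begin{equation*}
\bigl|\bbu_i^T\bbX\bbX^T\bbu_j-\delta_{ij}\bigr|\prec \frac{1}{\sqrt n},\qquad 1\le i,j\le K,
\end{equation*}
and since $K\ll n^{1/6}$ the bound holds uniformly over all entries, so $\|\bbU_1\bbX\bbX^T\bbU_1^T-\bbI_K\|_\infty\prec 1/\sqrt n$.

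\textbf{Step 2: Control of $m_{\bold\Sigma_1}(t)$ at large $t$.} For $t\sim\varphi(n)\to\infty$, iterating the defining equation \eqref{0331.2} with $\|\tilde\Sigma_1/t\|\to 0$ yields $m_{\bold\Sigma_1}(t)=-1/t+O(1/t^2)$. Since $\kappa(t)=|t-\gamma_+|\sim t$ when $t\to\infty$, this gives $|m_{\bold\Sigma_1}(t)|\lesssim 1/\kappa(t)$.

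\textbf{Step 3: Combine.} By the triangle inequality,
\begin{align*}
\bigl\|\bbU_1\bbX\bbD^{-1}(t)\bbX^T\bbU_1^T+m_{\bold\Sigma_1}(t)\bbI\bigr\|_\infty
&\le \bigl\|\bbU_1\bbX\bbD^{-1}(t)\bbX^T\bbU_1^T+m_{\bold\Sigma_1}(t)\bbU_1\bbX\bbX^T\bbU_1^T\bigr\|_\infty \\
&\quad+|m_{\bold\Sigma_1}(t)|\cdot\bigl\|\bbU_1\bbX\bbX^T\bbU_1^T-\bbI\bigr\|_\infty.
\end{align*}
Theorem \ref{0610-1} bounds the first term by $1/(\kappa(t)\sqrt n)$, and Steps 1--2 together bound the second by $(1/\kappa(t))\cdot(1/\sqrt n)$, which is the same order. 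This yields the claim.

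There is no real obstacle here: the only mildly delicate point is verifying that the Stieltjes transform $m_{\bold\Sigma_1}(t)$ decays like $1/\kappa(t)$ in the regime $t\sim\varphi(n)\to\infty$, which is needed so that the extra $m_{\bold\Sigma_1}(t)(\bbU_1\bbX\bbX^T\bbU_1^T-\bbI)$ error is swallowed into the $1/(\kappa(t)\sqrt n)$ bound of Theorem \ref{0610-1} rather than dominating it. Everything else is a direct application of the large deviation estimate for quadratic forms already invoked repeatedly in the preceding arguments.
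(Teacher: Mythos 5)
Your proof is correct and follows essentially the same route as the paper: the paper's one-line argument combines Theorem \ref{0610-1} with the large deviation bound $\|m_{\bold\Sigma_1}(t)\bbU_1\bbX\bbX^T\bbU_1^T-m_{\bold\Sigma_1}(t)\bbI\|_{\infty}\prec \frac{1}{\kappa(t)\sqrt n}$ via the triangle inequality, which is exactly your Steps 1--3. You merely make explicit what the paper leaves implicit, namely the entrywise concentration $\|\bbU_1\bbX\bbX^T\bbU_1^T-\bbI\|_\infty\prec n^{-1/2}$ and the decay $|m_{\bold\Sigma_1}(t)|\lesssim\kappa(t)^{-1}$ for $t\sim\varphi(n)\to\infty$.
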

\begin{proof}
This corollary follows from Theorem \ref{0610-1} and the large deviation inequality that
$$\|m_{\bold\Sigma_1}(t)\bbU_1\bbX\bbX^T\bbU_1^T-m_{\bold\Sigma_1}(t)\bbI\|_{\infty}\prec \frac{1}{\kappa(t)\sqrt n}.$$
\end{proof}
By the singular value  inequality, we have the following Lemma.
\begin{lem}\label{0606-1}
$$\sigma_{K+i}(\Lambda^{1/2}\bbU\bbX)\le \sigma_i(\Lambda_P^{1/2}\bbU_2\bbX), \  i=1,2,..., p-K,$$
where $\sigma_j(.)$ represents  the $j$-th largest singular value.
\end{lem}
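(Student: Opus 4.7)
\textbf{Proof plan for Lemma \ref{0606-1}.} The inequality is a standard singular-value interlacing statement: $\Lambda_P^{1/2}\bbU_2\bbX$ is obtained from $\Lambda^{1/2}\bbU\bbX$ by deleting $K$ rows (the block corresponding to $\Lambda_S^{1/2}\bbU_1\bbX$), so the $(K+i)$-th singular value of the full matrix is controlled by the $i$-th singular value of the smaller one. My plan is to derive this directly from the Courant--Fischer min-max characterization, exploiting the fact that the deleted block has rank at most $K$ as a map on $\mathbb{R}^n$.

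First, write
\begin{equation*}
\Lambda^{1/2}\bbU\bbX \;=\; \begin{pmatrix} \Lambda_S^{1/2}\bbU_1\bbX \\ \Lambda_P^{1/2}\bbU_2\bbX \end{pmatrix},
\end{equation*}
so that for any $x\in\mathbb{R}^n$,
\begin{equation*}
\|\Lambda^{1/2}\bbU\bbX\,x\|^2 \;=\; \|\Lambda_S^{1/2}\bbU_1\bbX\,x\|^2 \;+\; \|\Lambda_P^{1/2}\bbU_2\bbX\,x\|^2.
\end{equation*}
Let $W=\ker(\bbU_1\bbX)\subset\mathbb{R}^n$. Since $\bbU_1\bbX$ is a $K\times n$ matrix, $\dim W\ge n-K$, and on $W$ we have the exact equality $\|\Lambda^{1/2}\bbU\bbX\,x\|=\|\Lambda_P^{1/2}\bbU_2\bbX\,x\|$.

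Now apply the min-max formula $\sigma_i(B)=\min_{\dim V'=n-i+1}\max_{x\in V',\,\|x\|=1}\|Bx\|$ to $B=\Lambda_P^{1/2}\bbU_2\bbX$. Let $V'$ be the optimizing subspace of dimension $n-i+1$, and set $V=V'\cap W$, which has dimension at least $(n-i+1)+(n-K)-n = n-K-i+1$. Pick any subspace $V''\subset V$ with $\dim V''=n-K-i+1$. Because $V''\subset W$, the identity above gives $\|\Lambda^{1/2}\bbU\bbX\,x\|=\|\Lambda_P^{1/2}\bbU_2\bbX\,x\|$ for $x\in V''$, hence
\begin{equation*}
\max_{x\in V'',\,\|x\|=1}\|\Lambda^{1/2}\bbU\bbX\,x\| \;=\; \max_{x\in V'',\,\|x\|=1}\|\Lambda_P^{1/2}\bbU_2\bbX\,x\| \;\le\; \max_{x\in V',\,\|x\|=1}\|\Lambda_P^{1/2}\bbU_2\bbX\,x\| \;=\; \sigma_i(\Lambda_P^{1/2}\bbU_2\bbX).
\end{equation*}
Applying the min-max characterization to the left-hand side with the admissible subspace $V''$ yields
\begin{equation*}
\sigma_{K+i}(\Lambda^{1/2}\bbU\bbX) \;=\; \min_{\dim \widetilde V=n-K-i+1}\max_{x\in\widetilde V,\,\|x\|=1}\|\Lambda^{1/2}\bbU\bbX\,x\| \;\le\; \sigma_i(\Lambda_P^{1/2}\bbU_2\bbX),
\end{equation*}
which is the claim.

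The argument is elementary; there is no real obstacle. The only point to watch is the dimension count when intersecting $V'$ with $W$ (to make sure we have a subspace of at least the required dimension so that the min-max bound on $\sigma_{K+i}$ is applicable). Everything else is bookkeeping.
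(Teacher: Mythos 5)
Your proposal is correct, and it matches the paper, which simply invokes ``the singular value inequality'' (the standard interlacing bound $\sigma_{K+i}\!\left(\begin{smallmatrix} C \\ B\end{smallmatrix}\right)\le \sigma_i(B)$ when $C$ has $K$ rows) without writing out a derivation; your Courant--Fischer argument, intersecting the optimizing subspace with $\ker(\bbU_1\bbX)$, is exactly the standard proof of that cited fact. The dimension count is the only delicate point and you handle it correctly (with the trivial convention for indices where $n-K-i+1\le 0$), so nothing is missing.
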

In view of Lemma \ref{0606-1}, there are at most $K$ spiked eigenvalues. Moreover, we need the eigenvalues of $\bbX^T\bold\Sigma_1\bbX$ to be distinct. To this end, we assume that the entries of $\bbX$ are all absolutely continues. Otherwise we consider the matrix $\bbX+e^{-n}\bbY$ instead, where $\bbY$ is a $(p+l)\times n$ matrix consisting of i.i.d. standard normal random variables. It is easy to see that such a perturbation doesn't change the desired spectral properties and then the eigenvalues of $(\bbX+e^{-n}\bbY)^T\bold\Sigma_1(\bbX+e^{-n}\bbY)$ are all distinct almost surely.

In the sequel, we assume that the following events hold and all Lemmas below are based on these events:

1. All eigenvalues of $\bbX^T\bold\Sigma_1\bbX$ are distinct.

2. For all $\alpha=1,2,...,n$, we have $\bbU_1\bbX\zeta_{\alpha}\neq 0$, where $\zeta_{\alpha}$ is the eigenvector of $\bbX^T\bold\Sigma_1\bbX$ corresponding to the $\alpha$-th largest eigenvalue.

3. The rigidity result associated with $\bbX^T\bold\Sigma_1\bbX$ holds for $\ep/2$ for all $\nu_i\ge \gamma_+-n^{-2/3+5\ep}$, for example $|\nu_1-\gamma_+|\le n^{-2/3+\ep/2}$ and
\begin{eqnarray}\label{0611.5}
 \|\bbU_1\bbX\bbD^{-1}(z)\bbX^T\bbU_1^T+m_{\bold\Sigma_1}(z)\bbI\|_{\infty}\le \frac{n^{\ep/2}}{\kappa(z)\sqrt n}, \ \Re z\gg 1.
\end{eqnarray}
Here Claims 1 and 2 hold by the absolutely continuous of the entries of $\bbX$. Claim 3 is guaranteed by Corollary \ref{0611-1} and \cite{BPZ2014a}, \cite{KY14}.
 In the sequel, define the intervals
$$I_i=[\mu_i-\mu_iKn^{-1/2+2\ep}, \mu_i+\mu_iKn^{-1/2+2\ep}], \ i=1,...,K.$$
$$I_0=[\gamma_+-n^{-2/3+2\ep}, \gamma_++n^{-2/3+2\ep}].$$
$$\Gamma(\bbd)=\bigcup_{i=0}^KI_{i}.$$
The following proposition is to prove that $\Gamma(\bbd)$ is the permission area for the spiked eigenvalues and the extremal bulk eigenvalues.

\begin{prop}\label{0613-1}
Under Assumptions \ref{0829-1} or \ref{0829-1h},  the following holds:
$$I_i\bigcap I_0=\emptyset, \  i=1,...,K,$$
and
\begin{eqnarray}\label{0609.2}
\sigma(\Gamma\bbX\bbX^{T}\Gamma^T)\bigcap[\gamma_+-n^{-2/3+2\ep},\infty)\subset\Gamma(\bbd),
\end{eqnarray}
where $\sigma(\Gamma\bbX\bbX^{T}\Gamma^T)$ represents the set of the eigenvalues of $\Gamma\bbX\bbX^{T}\Gamma^T$.
\end{prop}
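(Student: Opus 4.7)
\medskip
\textbf{Proof plan.} The statement consists of a disjointness claim and a localization claim; I would obtain both by combining the boundedness of $\gamma_+$, the divergence of the spikes, and the local laws from Corollary \ref{0611-1} and (\ref{1128.12h}).

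For $I_i \cap I_0 = \emptyset$, observe that $\gamma_+$ is bounded since $\bold\Sigma_1$ has bounded operator norm under either Assumption \ref{0829-1} or \ref{0829-1h}. Meanwhile $\mu_i \to \infty$, and the half-width $\mu_i K n^{-1/2+2\ep}$ of $I_i$ is $o(\mu_i)$ once $\ep < 1/4$ and $K \ll n^{1/6}$. Hence for $n$ large, $I_i \subset [\mu_i/2,\, 2\mu_i]$ while $I_0$ lies in a bounded neighborhood of $\gamma_+$, giving disjointness.

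For the inclusion (\ref{0609.2}), take an eigenvalue $\lambda$ of $\bold\Gamma \bbX \bbX^{\intercal} \bold\Gamma^{\intercal}$ with $\lambda \geq \gamma_+ - n^{-2/3+2\ep}$. If $\lambda \leq \gamma_+ + n^{-2/3+2\ep}$ then $\lambda \in I_0$ by definition. Otherwise $\lambda > \gamma_+ + n^{-2/3+2\ep}$, and by the assumed rigidity event $|\nu_1 - \gamma_+| \leq n^{-2/3+\ep/2}$, $\lambda$ is not an eigenvalue of $\bbX^{\intercal} \bold\Sigma_1 \bbX$; Lemma \ref{0609-1} then yields
\begin{equation*}
\det\bigl(\bbI - \bold\Lambda_S^{1/2}\, \bbU_1 \bbX \bbD^{-1}(\lambda) \bbX^{\intercal} \bbU_1^{\intercal}\, \bold\Lambda_S^{1/2}\bigr) = 0.
\end{equation*}
The plan is to substitute the local-law approximation
\begin{equation*}
\bbU_1 \bbX \bbD^{-1}(\lambda) \bbX^{\intercal} \bbU_1^{\intercal} = -m_{\bold\Sigma_1}(\lambda)\, \bbI + E(\lambda), \qquad \|E(\lambda)\|_\infty \prec (\kappa(\lambda)\sqrt n)^{-1},
\end{equation*}
coming from (\ref{1128.12h}) when $\gamma_+ < \lambda \lesssim 1$ and from (\ref{0611.5}) (via Corollary \ref{0611-1}) when $\lambda \gg 1$. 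After conjugation by $\bold\Lambda_S^{1/2}$, the entries of the error are bounded by $\mu_1 n^{\ep/2}/(\kappa(\lambda)\sqrt n)$. Since $\bbI + m_{\bold\Sigma_1}(\lambda)\bold\Lambda_S$ is diagonal with entries $1 + m_{\bold\Sigma_1}(\lambda)\mu_i$ that are well-separated by Assumption \ref{0829-1c}, a Leibniz expansion of this $K\times K$ determinant would show that at least one diagonal factor must essentially vanish, i.e.\ $m_{\bold\Sigma_1}(\lambda) = -1/\mu_{i^*}$ up to $O(K \mu_{i^*} n^{\ep}/(\kappa(\lambda)\sqrt n))$ for some $i^* \in \{1,\dots,K\}$.

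Now $\lambda \mapsto -1/m_{\bold\Sigma_1}(\lambda)$ is continuous and strictly increasing on $(\gamma_+,\infty)$, equal to $1/d$ at $\gamma_+^+$ and tending to $\infty$, and the exact roots of $-1/m_{\bold\Sigma_1}(\lambda) = \mu_i$ are precisely $\lambda = \theta_i$ (via the scaling $\tilde m_\theta(1) = \theta\, m_{\bold\Sigma_1}(\theta)$ and (\ref{1101.6})). Since $\partial_\lambda m_{\bold\Sigma_1}(\lambda) \asymp \lambda^{-2}$ in the regime $\lambda \gg 1$, inverting the approximate relation gives $|\lambda - \theta_{i^*}| \lesssim \mu_{i^*} K n^{-1/2+\ep}$. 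Combining with $\theta_{i^*} - \mu_{i^*} = O(p/n) \ll \mu_{i^*} K n^{-1/2+2\ep}$ (from (\ref{h1023.2})) places $\lambda$ inside $I_{i^*}$. Importantly, in the intermediate range $\lambda \in (\gamma_+ + n^{-2/3+2\ep},\, \theta_K/2)$ the quantity $-1/m_{\bold\Sigma_1}(\lambda)$ is bounded, so it cannot match any diverging $\mu_{i^*}$; this rules out stray sample eigenvalues between $I_0$ and $I_K$.

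The chief technical obstacle is the $K\times K$ determinantal expansion: one must verify that the $K!$ off-diagonal cross terms, each carrying factors of size $\mu_1 n^{\ep/2}/(\kappa(\lambda)\sqrt n)$, do not overwhelm the dominant product of near-vanishing diagonals. The assumption $K^2 d_K \to 0$ (together with $K \ll n^{1/6}$) in Assumption \ref{0829-1}, combined with the separation $\mu_{i-1}/\mu_i \geq c > 1$ from Assumption \ref{0829-1c}, is exactly what makes this bookkeeping close with room to spare, because Assumption \ref{0829-1c} guarantees that only one diagonal factor $1 + m_{\bold\Sigma_1}(\lambda)\mu_i$ can approach zero at any given $\lambda$.
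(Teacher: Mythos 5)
Your reduction via Lemma \ref{0609-1} and the local laws is the same starting point as the paper, and for $\lambda$ large (say $\lambda\ge\log\mu_K$, where $\kappa(\lambda)\to\infty$) your determinant argument is essentially the paper's. The genuine gap is in the window just above the edge, $\lambda-\gamma_+\in(n^{-2/3+2\ep},\,O(1))$, which you also try to treat with the real-axis local law plus the Leibniz expansion. None of the estimates you cite actually does the job there: (\ref{1128.12h}) is only stated for $\Im z\ge n^{-2/3-\ep}$ with $\Re z-\gamma_+\le n^{-2/3+5\ep}$, (\ref{0611.3})/(\ref{1128.12}) require $\Re z-\gamma_+\ge n^{-2/3+5\ep}$, and Corollary \ref{0611-1} (hence (\ref{0611.5})) requires $t\to\infty$. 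More importantly, even where a bound of the form $n^{\ep/2}/(\kappa(\lambda)\sqrt n)$ is available, for $\kappa(\lambda)\asymp n^{-2/3+2\ep}$ it is of size $n^{1/6-3\ep/2}$ (and after your conjugation by $\Lambda_S^{1/2}$ it is further inflated by $\mu_1$), so it is not small compared with the $O(1)$ diagonal separation $|m_{\bold\Sigma_1}(\lambda)+\mu_i^{-1}|$; the determinant expansion therefore cannot show $\bbM(\lambda)$ is nonsingular near the edge. In addition, your claim that $-1/m_{\bold\Sigma_1}(\lambda)$ is \emph{bounded} on $(\gamma_++n^{-2/3+2\ep},\theta_K/2)$ is false — it behaves like $\lambda$ and reaches order $\mu_K$ on that range — so the "no stray eigenvalues between $I_0$ and $I_K$" conclusion is not established by that remark.

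The paper avoids this region entirely by a counting argument that needs no local law there: Weyl's inequality, $|\sigma_i(\Lambda^{1/2}\bbU\bbX)-\sigma_i(\Lambda_S^{1/2}\bbU_1\bbX)|\le\sigma_1(\Lambda_P^{1/2}\bbU_2\bbX)=O_p(1)$ together with $\sigma_K(\Lambda_S^{1/2}\bbU_1\bbX)^2\gtrsim\mu_K\gg\log\mu_K$, shows $\lambda_1,\dots,\lambda_K$ cannot fall in $[\gamma_++n^{-2/3+2\ep},\log\mu_K]$, while the interlacing Lemma \ref{0606-1} plus the rigidity $|\nu_1-\gamma_+|\le n^{-2/3+\ep/2}$ gives $\lambda_{K+1}\le\nu_1<\gamma_++n^{-2/3+2\ep}$; hence that whole window is free of eigenvalues, and the local-law/determinant step is only invoked on $[\log\mu_K,\infty)\setminus\bigcup_i I_i$, where the error $n^{\ep/2}/(\kappa(t)\sqrt n)$ is genuinely negligible. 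You would need to import this counting step (or an equivalent complex-$z$ edge argument in the spirit of (\ref{0612.2})--(\ref{1128.13})) to close your proof. A secondary unjustified step: your passage from "$\lambda$ close to $\theta_{i^*}$" to "$\lambda\in I_{i^*}$" uses $\theta_{i^*}-\mu_{i^*}=O(p/n)\ll\mu_{i^*}Kn^{-1/2+2\ep}$, i.e.\ $d_{i^*}\ll Kn^{-1/2+2\ep}$, which does not follow from Assumption \ref{0829-1} (only $K^2d_K\to0$ is assumed); the paper instead works directly with a lower bound on $|m_{\bold\Sigma_1}(t)+\mu_k^{-1}|$ off $\bigcup_iI_i$ using $m_{\bold\Sigma_1}(t)=-\frac{1}{t}(1+o(1))$, rather than locating eigenvalues near $\theta_i$.
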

\begin{proof}[Proof of Proposition \ref{0613-1}]
First of all, it is trivial to get $I_i\bigcap I_0=\emptyset, \  i=1,...,K$ by the definition of $I_i$. Therefore  it suffices to show (\ref{0609.2}). We define a $K\times K$ matrix $\bbM(t)$ with its entries  being
\begin{eqnarray}\label{0609.3}
\bbM_{ij}(t)=(\bbU_1\bbX\bbD^{-1}(t)\bbX^T\bbU_1^T)_{ij}-\delta_{ij}\mu_i^{-1}.
\end{eqnarray}
By Lemma \ref{0609-1}, we conclude that $t\in \sigma(\Gamma\bbX\bbX^{T}\Gamma^T)/\sigma(\Sigma_1^{1/2}\bbX\bbX^{T}\Sigma_1^{1/2})$ if and only if $\bbM(t)$ is singular. Therefore we focus on the value $t\nsubseteq \sigma(\Sigma_1^{1/2}\bbX\bbX^{T}\Sigma_1^{1/2})$. First we consider the case when  $t\ge \gamma_++n^{-2/3+2\ep}$. By Corollary \ref{0611-1} we have $\bbM(t)=-m_{\bold\Sigma_1}(t)\bbI-\Lambda_{\bbS}^{-1}+O(\frac{n^{\ep/2}}{\kappa(t)\sqrt{n}})$, where $\bbA=O(1)$ means $\|\bbA\|_{\infty}=O(1)$. On the other hand, for all $t\in [\log\mu_K,\infty]\setminus\bigcup_{i=1}^KI_{i}$, by $m_{\bold\Sigma_1}(t)=-\frac{1}{t}(1+o(1))$ we have
$$\min_{k}\{|m_{\bold\Sigma_1}(t)\bbI+\mu_k^{-1}|, k=1,...,K\}\ge \frac{Kn^{\ep}}{\kappa(t)\sqrt{n}}.$$
Therefore any $t\in[\log(\nu_K),\infty]\setminus\bigcup_{i=1}^KI_{i}$ is not an eigenvalue of $\Gamma\bbX\bbX^T\Gamma^T$ with high probability.
Moreover, by Weyl's inequality, we have
$$|\sigma_i(\Lambda^{1/2}\bbU\bbX)-\sigma_i(\Lambda_S^{1/2}\bbU_1\bbX)|\le \sigma_1(\Lambda_P^{1/2}\bbU_2\bbX)\sim 1,$$
which implies that the first K eigenvalues of $\Gamma\bbX\bbX^{T}\Gamma^T$ do not belong to $[\gamma_++n^{-2/3+2\ep},\log \mu_K]$ with high probability by the fact that $\sigma_K(\Lambda_S^{1/2}\bbU_1\bbX)\ge \sqrt \mu_K|(1-\sqrt{\frac{K}{n}})|\gg \log \mu_K$. Also, by Lemma \ref{0606-1}, we conclude that the ($K+1$)-th eigenvalue of $\Gamma\bbX\bbX^{T}\Gamma^T$ is smaller than $\gamma_++n^{-2/3+2\ep}$ with high probability. Therefore, together with Lemma \ref{0606-1}, $[\gamma_++n^{-2/3+2\ep},\log \nu_K]$ is a forbidden area of the eigenvalues of $\Gamma\bbX\bbX^{T}\Gamma^T$.
\end{proof}

\begin{prop}\label{0613-2}
Under Assumption \ref{0829-1}, for large enough n, each interval $I_i$, $i=1,...,K$ contains exactly one eigenvalue of $\Gamma\bbX\bbX^{T}\Gamma^T$.
\end{prop}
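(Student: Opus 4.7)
The plan is as follows. By Lemma \ref{0609-1}, an outlier $t \notin \sigma(\bold\Sigma_1^{1/2}\bbX\bbX^{T}\bold\Sigma_1^{1/2})$ of $\Gamma\bbX\bbX^{T}\Gamma^{T}$ is characterized by $\det \bbM(t) = 0$ with $\bbM(t)$ as in (\ref{0609.3}). Proposition \ref{0613-1} already localizes the outliers above $\gamma_+ + n^{-2/3+2\epsilon}$ inside the disjoint union $\bigcup_{i=1}^{K} I_i$, and Lemma \ref{0606-1} together with the rigidity estimate $\nu_1 \le \gamma_+ + n^{-2/3+2\epsilon}$ implies that there are \emph{at most} $K$ such outliers. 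It therefore suffices to exhibit at least one root of $\det \bbM(t) = 0$ on every interval $I_i$, $i = 1,\dots,K$; the ``exactly one'' conclusion will then follow from counting.

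The production of a root inside $I_i$ will be done analytically. Applying Corollary \ref{0611-1} uniformly in $t \in I_i$ (where $\kappa(t) \sim \mu_i$) gives
\[
\bbM(t) \;=\; \operatorname{diag}\!\bigl(-m_{\bold\Sigma_1}(t) - \mu_j^{-1}\bigr)_{j=1}^{K} + E(t), \qquad \|E(t)\|_{\infty} \prec \frac{1}{\mu_i\sqrt n}.
\]
The far-field expansion $-m_{\bold\Sigma_1}(t) = 1/t + O(1/t^2)$ together with the spike separation in Assumption \ref{0829-1c} forces the $j$-th leading diagonal entry to have magnitude $\gtrsim 1/\mu_i$ for every $j \ne i$, whereas the $(i,i)$-entry is strictly monotone in $t$ and attains magnitude $\gtrsim K n^{-1/2+2\epsilon}/\mu_i$ at the two endpoints of $I_i$, with opposite signs there. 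A Leibniz expansion of $\det \bbM(t)$ then shows that every permutation other than the identity contributes at least two off-diagonal factors of size $\|E\|_\infty$ and is therefore smaller than the identity contribution by a factor $O(n^{-1/2+2\epsilon}K^{-1})$ at $\partial I_i$; hence $\det \bbM(t)$ inherits the sign of its $(i,i)$-diagonal on $\partial I_i$, which flips across $I_i$, and the intermediate value theorem supplies a zero inside $I_i$.

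The main obstacle is promoting the pointwise entrywise stochastic domination of Corollary \ref{0611-1} to a uniform control over the continuum $I_i$, while keeping the error estimates robust as $K$ may diverge. I would handle the uniformity by discretizing $I_i$ on a grid of spacing $n^{-10}$, so that a union bound absorbs the $\prec$-probabilities, and control the oscillation of $\bbM(t)$ between grid points using the resolvent identity $\bbD^{-1}(t) - \bbD^{-1}(s) = (s-t)\bbD^{-1}(t)\bbD^{-1}(s)$ combined with $\|\bbD^{-1}(t)\| \lesssim 1/\mu_i$ on $I_i$. The combinatorial blow-up of the Leibniz expansion is tamed by balancing the $n^{-r/2}$ gain for permutations with $r \ge 2$ non-fixed points against the $K!/(K-r)!$ count of such permutations, which under the bound $K \ll n^{1/6}$ of Assumption \ref{0829-1} remains negligible for every $r \ge 2$.
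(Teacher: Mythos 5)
Your proposal is correct in outline but follows a genuinely different route from the paper. The paper also starts from Lemma \ref{0609-1}, the matrix $\bbM(t)$ of (\ref{0609.3}), and the local law (\ref{0611.5}), but then works complex-analytically: it compares $F_n(z)=\det\bbM(z)$ with the deterministic $f_n(z)=\det\bigl(m_{\bold\Sigma_1}(z)\bbI+\Lambda_{\bbS}^{-1}\bigr)$ on contours $\mathcal{C}_i$ of radius of order $\mu_i$ around each $\mu_i$ (these exist by Assumption \ref{0829-1c}), shows $f_n$ has exactly one zero inside each $\mathcal{C}_i$, and transfers this to $F_n$ by Rouch\'e's theorem; the localization from the large disk down to $I_i$ is then delegated entirely to Proposition \ref{0613-1}. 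You instead get the upper bound ``at most $K$ outliers'' from Lemma \ref{0606-1} plus rigidity, and produce a zero of $\det\bbM$ in each $I_i$ by an intermediate-value sign-change argument at the endpoints of $I_i$, with the identity term of the Leibniz expansion dominating thanks to Corollary \ref{0611-1}; this replaces Rouch\'e's exact count by elementary counting and is perfectly legitimate (your permutation bookkeeping under $K\ll n^{1/6}$ is the same kind of estimate the paper needs for $|f_n-F_n|$, and the uniformity-in-$t$ machinery you describe is actually unnecessary — you only need the local law at the two endpoints $t_\pm\in\partial I_i$ plus deterministic continuity of $\det\bbM$ on $I_i$, which holds once no eigenvalue of $\bbX^{\intercal}\bold\Sigma_1\bbX$ lies there). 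One caveat you should make explicit: your sign flip of the $(i,i)$ entry at $\partial I_i$ requires the zero $\theta_i$ of $-m_{\bold\Sigma_1}(t)-\mu_i^{-1}$ to lie strictly inside $I_i$, i.e.\ the nonspiked shift $\theta_i-\mu_i=O(p/n)$ must be smaller than the half-width $\mu_iKn^{-1/2+2\ep}$; this is not a literal consequence of Assumption \ref{0829-1} alone (e.g.\ $p\sim n$ with slowly growing $\mu_i$), but it is exactly the same point that the paper glosses in its proof of Proposition \ref{0613-1} (the bound $\min_k|m_{\bold\Sigma_1}(t)+\mu_k^{-1}|\ge Kn^{\ep}/(\kappa(t)\sqrt n)$ off $\bigcup I_i$), so relying on it puts you no worse off than the paper — whereas the paper's Rouch\'e step itself is insensitive to the centering of $I_i$ because the contour radius $\sim\mu_i$ comfortably contains $\theta_i$. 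The stated comparison factor $O(n^{-1/2+2\ep}K^{-1})$ should read $O(n^{-1/2-2\ep+\ep'}K^{-1})$ for the transpositions moving $i$, a harmless slip.
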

\begin{proof}
We choose a positive oriented contours $\mathcal{C}=\bigcup_{i=1}^K\mathcal{C}_i\subset \mathbb{C}\setminus [\gamma_-,\gamma_+]$ such that each contour $\mathcal{C}_i$ encloses $d_i$ but no other points of $\mu_j$, $j\neq i$. Moreover, the radius of each contour enclosing $\mu_i$ is of the same order of $\mu_i$. By Assumption \ref{0829-1c}, such contours exist. In view of Proposition \ref{0613-1}, it suffices to prove that there exists exactly one eigenvalue of $\Gamma\bbX\bbX^{T}\Gamma^T$ in each contour. Recalling that $\bbM(z)$ in (\ref{0609.3}), we define the following two functions
$$F_n(z)=\det(\bbM(z)), \  \ f_n(z)=\det(m_{\bold\Sigma_1}(z)\bbI+\Lambda_{\bbS}^{-1}).$$
By the definition of $\mathcal{C}$, the functions $F_n$ and $f_n$ are holomorphic in $\mathcal{C}$. Furthermore, the construction of $\mathcal{C}_i$ ensures that each $\mathcal{C}_i$ contains exactly one root of $f_n(z)=0$. For instance, we look at the first contour $\mathcal{C}_1$ containing $\mu_1$. For any $z\in \mathcal{C}_1, \Im z\neq 0$, it is easy to see that  $\Im f_n(z)\neq 0$. If $z\in \mathcal{C}_1, \Im z=0$, then $m_{\bold\Sigma_1}(z)$ is an increasing function of z. Combining with the fact that $m_{\bold\Sigma_1}(z)\bbI+\Lambda_{\bbS}^{-1}$ is a diagonal matrix, we conclude that there is only one root of $f_n(z)=0$ in $\mathcal{C}_1$.
By (\ref{0611.5}) and Lebniz's formula for the determinant, it is not hard to see that
$$|f_n(z)-F_n(z)|\le \frac{K^2n^{\ep/2}}{\sqrt n}\min_{z\in \partial \mathcal{C}_i}|f_n(z)|,$$
which implies that $F_n(z)$ also contains exactly one root of $F_n(z)=0$ in $\mathcal{C}_i$ by Rouch\'e's theorem. Notice that this arguments hold uniformly for $i=1,..., K$, by Proposition \ref{0613-1} and $I_i\subset \mathcal{C}_i$. We finish our proof.
\end{proof}
Similarly, we have
 \begin{prop}\label{0613-2h}
Under Assumption \ref{0829-1h}, for large enough n, each interval $\bigcup_{j=m_i+1}^{m_i+n_i}I_j=I_{m_i+1}$, $i=0,...,\mathcal{L}$ contains exactly $n_i$ eigenvalue of $\Gamma\bbX\bbX^{T}\Gamma^T$.
\end{prop}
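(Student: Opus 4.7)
My plan is to adapt the Rouch\'e-type argument of Proposition \ref{0613-2} so that it counts roots with multiplicity. Let $\theta_i$ be the solution of $m_{\bold\Sigma_1}(\theta_i)+\alpha_i^{-1}=0$ (so that $\theta_i=\alpha_i(1+O(d))$). Under Assumption \ref{0829-1h} the limits $\theta_1>\theta_2>\cdots>\theta_{\mathcal L}$ are uniformly separated in the sense that $\theta_{i-1}/\theta_i\ge c>1$, so I can pick a positively oriented contour $\mathcal{C}_i\subset \mathbb{C}\setminus[\gamma_-,\gamma_+]$ encircling $\theta_i$ (and hence the cluster $I_{r_i+1}=\bigcup_{j=r_i+1}^{r_i+n_i}I_j$) at distance $\sim\alpha_i$ from $\theta_i$, while excluding all other $\theta_j$. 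On $\mathcal{C}_i$ the function $m_{\bold\Sigma_1}(z)$ is holomorphic and $|m_{\bold\Sigma_1}(z)+\alpha_i^{-1}|\sim \alpha_i^{-1}$ away from $\theta_i$.

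Next I would introduce the same pair of scalar functions as in Proposition \ref{0613-2}:
\[
F_n(z)=\det \bbM(z),\qquad f_n(z)=\det\bigl(m_{\bold\Sigma_1}(z)\bbI+\Lambda_{\bbS}^{-1}\bigr).
\]
Because $\Lambda_{\bbS}^{-1}$ has exactly $n_i$ diagonal entries equal to $\alpha_i^{-1}$, the determinant factorizes as
\[
f_n(z)=\prod_{k=1}^{\mathcal L}\bigl(m_{\bold\Sigma_1}(z)+\alpha_k^{-1}\bigr)^{n_k},
\]
and hence $f_n$ has a zero of order exactly $n_i$ at $z=\theta_i$ and no other zeros inside $\mathcal{C}_i$; in particular $\min_{z\in\partial\mathcal{C}_i}|f_n(z)|\gtrsim \prod_k \alpha_k^{-n_k}$ up to constants coming from the radius of $\mathcal{C}_i$ and the separation of the $\theta_k$.

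Now I would invoke the uniform estimate (\ref{0611.5}): on $\partial \mathcal{C}_i$ the entries of $\bbM(z)+m_{\bold\Sigma_1}(z)\bbI+\Lambda_{\bbS}^{-1}$ are bounded by $n^{\ep/2}(\kappa(z)\sqrt{n})^{-1}$ with high probability, so by Leibniz's determinant expansion
\[
|F_n(z)-f_n(z)|\le \frac{K^2 n^{\ep/2}}{\sqrt n}\,\min_{z\in\partial\mathcal{C}_i}|f_n(z)|,
\]
uniformly in $z\in\partial\mathcal{C}_i$, where the factor $K$ is absorbed since $\mathcal L\le K\ll n^{1/6}$. Rouch\'e's theorem then shows that $F_n$ and $f_n$ have the same number of zeros inside $\mathcal{C}_i$, namely $n_i$, counted with multiplicity. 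By Lemma \ref{0609-1}, zeros of $F_n$ outside $\sigma(\bold\Sigma_1^{1/2}\bbX\bbX^{\intercal}\bold\Sigma_1^{1/2})$ are precisely the eigenvalues of $\Gamma\bbX\bbX^{\intercal}\Gamma^{\intercal}$, and Proposition \ref{0613-1} (extended to Assumption \ref{0829-1h}) rules out eigenvalues in $\mathcal{C}_i\setminus I_{r_i+1}$, so these $n_i$ zeros must lie in $I_{r_i+1}$.

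The main obstacle I anticipate is verifying that the Rouch\'e comparison truly counts multiplicity rather than just the number of distinct eigenvalues: one must be careful because a priori several sample eigenvalues could collide inside $I_{r_i+1}$. This is handled automatically by Rouch\'e, which counts with multiplicity, combined with the earlier assumption (recorded just before Proposition \ref{0613-1}) that the entries of $\bbX$ are absolutely continuous so that the spectrum of $\Gamma\bbX\bbX^{\intercal}\Gamma^{\intercal}$ is almost surely simple; thus the $n_i$ zeros of $F_n$ inside $\mathcal{C}_i$ are in fact $n_i$ distinct eigenvalues, and a standard perturbation removal (as in the remark preceding Corollary \ref{0611-1}) transfers the conclusion to the general case.
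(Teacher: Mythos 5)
Your proposal is correct and follows essentially the same route the paper intends: Proposition \ref{0613-2h} is stated as a "similarly" extension of Proposition \ref{0613-2}, i.e.\ the same Rouch\'e comparison of $F_n(z)=\det\bbM(z)$ with $f_n(z)=\det(m_{\bold\Sigma_1}(z)\bbI+\Lambda_{\bbS}^{-1})$ on contours separating the clusters, using (\ref{0611.5}) and the Leibniz expansion, with Rouch\'e now counting the zero of order $n_i$ of $f_n$ at the cluster location. Your explicit handling of multiplicity and the appeal to Proposition \ref{0613-1} to confine the resulting $n_i$ eigenvalues to $I_{r_i+1}$ is exactly the adaptation the paper leaves implicit.
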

Assume that $\Gamma\bbX\bbX^{T}\Gamma^T$ and $\Sigma_1^{1/2}\bbX\bbX^T\Sigma_1^{1/2}$ do not have the same eigenvalue. Before  considering the phase transition, we show the following delocalization result, which is used in the eigenvalue counting arguments.

\begin{lem}\label{0615-1}
Assume that $\zeta_i$ is the eigenvector of $(\bbX^T\bbU^T_2\Lambda_P\bbU_2\bbX-t\bbI)^{-1}$ corresponding to the eigenvalue $\nu_i\ge
\gamma_+-n^{-2/3+5\ep}$ for a sufficiently small constant $\ep$. We have
$$\bbe_k^T\bbU_1^T\bbX\zeta_i\prec \frac{1}{\sqrt n}.$$
\end{lem}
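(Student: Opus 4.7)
The plan is to deduce this delocalization bound from the entry-wise isotropic local law (\ref{1128.12h}) by a standard argument that isolates the contribution of the eigenvector $\zeta_i$ inside the imaginary part of a Green function. First I would observe that since $\bbU_2^{\intercal}\bold\Lambda_P\bbU_2=\bold\Sigma_1$ (by (\ref{a8})), and eigenvectors of $(\bbA-t\bbI)^{-1}$ agree with those of $\bbA$, the vector $\zeta_i$ is in fact an eigenvector of $\bbX^{\intercal}\bold\Sigma_1\bbX$ with eigenvalue $\nu_i$. Set
\[
\bbG(z)=\big(\bbX^{\intercal}\bold\Sigma_1\bbX-z\bbI\big)^{-1}=-\bbD^{-1}(z),\qquad z=\nu_i+i\eta,\quad \eta=n^{-2/3-\ep}.
\]
From the spectral decomposition $\bbG(z)=\sum_{\alpha}\zeta_{\alpha}\zeta_{\alpha}^{\intercal}/(\nu_{\alpha}-z)$ one has
\[
\Im\big(\bbU_1\bbX\bbG(z)\bbX^{\intercal}\bbU_1^{\intercal}\big)_{kk}
=\sum_{\alpha}\frac{\eta\,|(\bbU_1\bbX\zeta_{\alpha})_k|^2}{(\nu_{\alpha}-\nu_i)^2+\eta^2}
\ \geq\ \frac{|(\bbU_1\bbX\zeta_i)_k|^2}{\eta},
\]
so that $|\bbe_k^{\intercal}\bbU_1^{\intercal}\bbX\zeta_i|^2 \leq \eta\,\Im(\bbU_1\bbX\bbG(z)\bbX^{\intercal}\bbU_1^{\intercal})_{kk}$.

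Next I would invoke (\ref{1128.12h}) applied to the block corresponding to $\bbU_1$ inside $\tilde\bbU_1$, together with $\bbD^{-1}(z)=-\bbG(z)$, to get
\[
\big(\bbU_1\bbX\bbG(z)\bbX^{\intercal}\bbU_1^{\intercal}\big)_{kk}
= m_{\bold\Sigma_1}(z)\,(\bbU_1\bbX\bbX^{\intercal}\bbU_1^{\intercal})_{kk}+O_{\prec}(\Phi(z)).
\]
The large deviation bound gives $(\bbU_1\bbX\bbX^{\intercal}\bbU_1^{\intercal})_{kk}=1+O_{\prec}(n^{-1/2})$, so after taking the imaginary part
\[
\Im\big(\bbU_1\bbX\bbG(z)\bbX^{\intercal}\bbU_1^{\intercal}\big)_{kk}\prec \Im m_{\bold\Sigma_1}(z)+\Phi(z).
\]
Here one needs to check that $z$ lies in the regime in which (\ref{1128.12h}) is valid; this is ensured by the rigidity $\nu_1\le \gamma_++n^{-2/3+\ep/2}$ (claim 3 in the assumed events) and the hypothesis $\nu_i\ge \gamma_+-n^{-2/3+5\ep}$.

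The third step is to plug in the square-root edge behavior of $\Im m_{\bold\Sigma_1}$. By Lemmas 2.3 and 2.5 of \cite{BPZ2014a}, for $z=E+i\eta$ with $|E-\gamma_+|=\kappa\le n^{-2/3+5\ep}$,
\[
\Im m_{\bold\Sigma_1}(z)\ \lesssim\ \sqrt{\kappa+\eta}\ \lesssim\ n^{-1/3+3\ep},
\]
while $\Phi(z)=\sqrt{\Im m_{\bold\Sigma_1}(z)/(n\eta)}+1/(n\eta)\prec n^{-1/3+2\ep}$. Multiplying by $\eta=n^{-2/3-\ep}$ yields
\[
|\bbe_k^{\intercal}\bbU_1^{\intercal}\bbX\zeta_i|^2 \ \prec\ \eta\bigl(\Im m_{\bold\Sigma_1}(z)+\Phi(z)\bigr) \ \prec\ n^{-1},
\]
which, together with the fact that $\prec$ absorbs arbitrary $n^{\ep}$ factors, gives exactly $\bbe_k^{\intercal}\bbU_1^{\intercal}\bbX\zeta_i\prec n^{-1/2}$.

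The only real obstacle is bookkeeping: one must choose $\eta$ at the optimal scale $n^{-2/3-\ep}$, verify that (\ref{1128.12h}) can be used uniformly in $E$ throughout the edge window $[\gamma_+-n^{-2/3+5\ep},\gamma_++n^{-2/3+\ep/2}]$ and extracted from $\tilde\bbU_1$ to $\bbU_1$, and track the square-root singularity of $m_{\bold\Sigma_1}$ so that both $\eta\,\Im m_{\bold\Sigma_1}$ and $\eta\,\Phi$ are of order $n^{-1}$ up to $n^{\ep}$ factors. All three points are routine consequences of the isotropic local law and the known edge regularity of the limiting spectrum.
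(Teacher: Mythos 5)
Your strategy is in essence the same as the paper's: both proofs bound $|\bbe_k^{\intercal}\bbU_1\bbX\zeta_i|^2$ by $\eta$ times the imaginary part of a diagonal entry of $\bbU_1\bbX\bbD^{-1}(z)\bbX^{\intercal}\bbU_1^{\intercal}$ at $z=\nu_i+i\eta$, and then control that entry by the entrywise local law. The difference is the choice of $\eta$, and this is where your final step has a genuine flaw. With $\eta=n^{-2/3-\ep}$ and the window $|\nu_i-\gamma_+|\le n^{-2/3+5\ep}$ you obtain $\eta\bigl(\Im m_{\bold\Sigma_1}(z)+\Phi(z)\bigr)\lesssim n^{-1+C\ep}$, and you then assert that the $n^{C\ep}$ factor is absorbed because ``$\prec$ absorbs arbitrary $n^{\ep}$ factors''. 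That is not correct: the $\ep$ here is the \emph{fixed} constant appearing in the hypothesis of the lemma and in the domain of validity of (\ref{1128.12h}), whereas the exponent that stochastic domination (Definition \ref{1123.1}) lets you discard must be arbitrary, chosen after the statement. As written, your argument proves $\bbe_k^{\intercal}\bbU_1\bbX\zeta_i\prec n^{-1/2+C\ep}$, which is strictly weaker than the claimed $\prec n^{-1/2}$; and since $\eta\mapsto -\eta\,\Im\bigl(\bbU_1\bbX\bbD^{-1}(E+i\eta)\bbX^{\intercal}\bbU_1^{\intercal}\bigr)_{kk}$ is monotone increasing, you cannot repair this by optimizing $\eta$ within the range $\eta\ge n^{-2/3-\ep}$ where the edge law (\ref{1128.12h}) is stated.

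The paper gets the clean exponent by decoupling the small parameter from $\ep$: it evaluates the resolvent at $z=\nu_i+in^{-1+\iota}$ with $\iota>0$ arbitrary and needs only the crude statement that the diagonal entry is $O(1)$ with high probability (via (\ref{0611.3}); the error there merely has to be $o(1)$, not sharp, though the paper is itself somewhat cavalier about the validity of the local law at such a small imaginary part). Then $|\bbe_k^{\intercal}\bbU_1\bbX\zeta_i|^2\le n^{-1+\iota}\cdot O(1)$ for every $\iota$, which is exactly $\prec n^{-1}$. So you should either adopt that choice of spectral scale and justify the $O(1)$ bound at $\Im z=n^{-1+\iota}$, or weaken the statement to $\prec n^{-1/2+C\ep}$ (which would in fact still suffice for the eigenvalue-counting argument where the lemma is used). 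Apart from this point, your steps — identifying $\zeta_i$ as an eigenvector of $\bbX^{\intercal}\bold\Sigma_1\bbX$, the spectral-decomposition inequality, the use of (\ref{1128.12h}) together with the large-deviation bound for $(\bbU_1\bbX\bbX^{\intercal}\bbU_1^{\intercal})_{kk}$, and the square-root edge behavior of $\Im m_{\bold\Sigma_1}$ — are sound; strictly speaking one also needs a net/continuity argument because $z=\nu_i+i\eta$ is random, a point the paper glosses over as well.
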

\begin{proof}
By (\ref{0611.3}) with $z=v_i+in^{-1+\iota}$, $0<\iota$, we have
\begin{equation}\label{0611.3h}
\bbe_k^T\bbU_1\bbX\bbD^{-1}(z)\bbX^T\bbU_1^T\bbe_k+m_{\bold\Sigma_1}(z)\bbe_k^T\bbU_1^T\bbX\bbX^T\bbU_1\bbe_k\prec \sqrt{\frac{1}{n\kappa}}\le n^{-1/8}.
\end{equation}
Therefore, with high probability $\bbe_k^T\bbU_1\bbX\bbD^{-1}(z)\bbX^T\bbU_1^T\bbe_k=O(1)$. Moreover,
\begin{eqnarray}\label{0611.4h}
&&-\Im \bbe_k^T\bbU_1\bbX\bbD^{-1}(z)\bbX^T\bbU_1^T\bbe_k=n^{-1+\iota}\sum_j\frac{\bbe_k^T\bbU_1\bbX\zeta_j\zeta_j^T\bbX^T\bbU_1^T\bbe_k}{|\nu_j-z|^2}\\
&&\ge n^{-1+\iota}\frac{\bbe_k^T\bbU_1\bbX\zeta_i\zeta_i^T\bbX^T\bbU_1^T\bbe_k}{|\nu_i-z|^2}=\frac{(\bbe_k^T\bbU_1^T\bbX\zeta_i)^2}{n^{-1+\iota}}.\nonumber
\end{eqnarray}
Since $\iota$ can be arbitrary small, the proof of this Lemma is complete.
\end{proof}
\subsection{The Non-spiked eigenvalues}
Considering the non-spiked eigenvalues, we prove the following area is forbidden for the eigenvalues of $\Gamma\bbX\bbX^T\Gamma^T$.
\begin{equation}\label{0612.1}
t\in[\gamma_+-n^{-2/3+2\ep},\gamma_++n^{-2/3+2\ep}],  \ \ dist(t,\sigma(\Sigma_1^{1/2}\bbX\bbX^T\Sigma_1^{1/2}))\ge n^{-2/3-2\ep}.
\end{equation}
Similar to the arguments of Proposition \ref{0613-1}, we aim at showing that for $t$ satisfying (\ref{0612.1}), $\bbM(t)$ is non singular. Choosing $\eta=n^{-2/3-\ep}$ and $z=t+i\eta$, we have
\begin{eqnarray}\label{0612.2}
&&|\left[\bbU_1\bbX\bbD^{-1}(t)\bbX^T\bbU_1^T-\bbU_1^T\bbX\bbD^{-1}(z)\bbX^T\bbU_1^T\right]_{ij}|\\
&&\le \sum_{\alpha}\frac{|\langle\bbX^T\bbU_1^T\bbe_i,\zeta_{\alpha}\rangle|^2+|\langle\bbX^T\bbU_1^T\bbe_j,\zeta_{\alpha}\rangle|^2}{2}\left|\frac{1}{\lambda_{\alpha}-t}-\frac{1}{\lambda_{\alpha}-z}\right|\non
&&\le \sum_{\alpha}\frac{|\langle\bbX^T\bbU_1^T\bbe_i,\zeta_{\alpha}\rangle|^2+|\langle\bbX^T\bbU_1^T\bbe_j,\zeta_{\alpha}\rangle|^2}{2}\frac{\eta}{\eta^2+(\lambda_{\alpha}-t)^2}\non
&&=-\Im (\bbU_1\bbX\bbD^{-1}(z)\bbX^T\bbU_1^T)_{ii}-\Im (\bbU_1\bbX\bbD^{-1}(z)\bbX^T\bbU_1^T)_{jj}\nonumber,
\end{eqnarray}
where $\zeta_{\alpha}$ is the eigenvector of $\bbX^T\bold\Sigma_1\bbX$ corresponding to the $\alpha$-th largest eigenvalue.
Therefore, by local law we have
\begin{eqnarray}\label{1128.13}
&&\\
&&\bbM(t)=-m_{\bold\Sigma_1}(z)\bbI-\Lambda_{\bbS}^{-1}+O(n^{\ep/2}\Im m_{\bold\Sigma_1}(z)+\frac{n^{\ep/2}}{n\eta})=-m_{\bold\Sigma_1}(z)\bbI-\Lambda_{\bbS}^{-1}+O(n^{-1/3+2\ep}).\nonumber
\end{eqnarray}
Since $|m_{\bold\Sigma_1}(z)|\sim 1$, we have $|m_{\bold\Sigma_1}(z)+\mu_i^{-1}|\sim 1$, $i=1,..., K$ uniformly. Therefore, it is easy to see that $\bbM(t)$ is non singular for
$t$ satisfying (\ref{0612.1}). Up to now, we are ready to prove Theorem \ref{0606-2}. 

 Actually, once the tools and results including Lemma \ref{0609-1}--(\ref{0612.1}) are available, the the proof of Theorem \ref{0606-2} is almost the same as the proof of Proposition 6.8 in \cite{KY11}. The only difference is that we only prove that the eigenvalues are sticking with the order $n^{-2/3-\ep}$ instead of $n^{-1+\ep}$, which is caused by allowing K to tend to infinity. Hence we ignore the details.


The detailed proof  is similar to Proposition 6.8 in \cite{KY11} and thus we omit it.
%

\end{document}